\DeclareSymbolFontAlphabet{\mathcal}{symbols}
\numberwithin{equation}{section}
\definecolor{trama}{gray}{.875}
\title
{Variations on Poincar\'e duality for intersection homology}
\date{\today}
\author{Martintxo Saralegi-Aranguren}
\address{Laboratoire de Math{\'e}matiques de Lens\\  
      EA 2462 \\
      Universit\'e d'Artois\\
         SP18, rue Jean Souvraz\\
          62307 Lens Cedex\\
         France}
\email{martin.saraleguiaranguren@univ-artois.fr}
\author{Daniel Tanr\'e}
\address{D\'epartement de Math{\'e}matiques\\
         Facult\'e des Sciences et Technologies, UMR 8524 \\
         Universit\'e de Lille\\
         59655 Villeneuve d'Ascq Cedex\\
         France}
\email{Daniel.Tanre@univ-lille.fr}
\thanks{
The second author was  supported by the MINECO and FEDER research project MTM2016-78647-P. 
and the ANR-11-LABX-0007-01  ``CEMPI''}
\subjclass[2010]{55N33, 57P10, 57N80, 55U30}
\keywords{Intersection homology; Peripheral complex;  Poincar\'e duality}
\renewcommand\l@subsection{\@tocline{2}{0pt}{2pc}{5pc}{}}
\renewcommand\l@subsubsection{\@tocline{3}{0pt}{4pc}{10pc}{}}
\theoremstyle{plain}
\newtheorem{theorem}{Theorem}
\newtheorem{proposition}{Proposition}[section]
\newtheorem{theoremb}[proposition]{Theorem}
\newtheorem{lemma}[proposition]{Lemma}
\newtheorem{corollary}[proposition]{Corollary}
\theoremstyle{definition}
\newtheorem{definition}[proposition]{Definition}
\newtheorem{example}[proposition]{Example}
\theoremstyle{remark}
\newtheorem{remark}[proposition]{Remark}
\newcommand{\secref}[1]{Section~\ref{#1}}
\newcommand{\thmref}[1]{Theorem~\ref{#1}}
\newcommand{\propref}[1]{Proposition~\ref{#1}}
\newcommand{\lemref}[1]{Lemma~\ref{#1}}
\newcommand{\corref}[1]{Corollary~\ref{#1}}
\newcommand{\remref}[1]{Remark~\ref{#1}}
\newcommand{\examref}[1]{Example~\ref{#1}}
\newcommand{\defref}[1]{Definition~\ref{#1}}
\def\R{{\mathbb R}}
\def\ov{\overline}
\def\cP{{\mathcal P}}
\def\cS{{\mathcal S}}
\def\cU{{\mathcal U}}
\def\cV{{\mathcal V}}
\def\cW{{\mathcal W}}
\def\crC{{\mathscr C}}
\def\crD{{\mathscr D}}
\def\crF{{\mathscr F}}
\def\crG{{\mathscr G}}
\def\crH{{\mathscr{H}}}
\def\crK{{\mathscr K}}
\def\crN{{\mathscr N}}
\def\crT{{\mathscr T}}
\def\crR{{\mathscr R}}
\def\1{{\mathbf 1}}
\def\gd{{\mathfrak{d}}}
\def\gD{{\mathfrak{D}}}
\def\gC{{\mathfrak{C}}}
\def\gH{{\mathfrak{H}}}
\def\tc{{\mathtt c}}
\def\tu{{\mathtt u}}
\def\tv{{\mathtt v}}
\def\C{\mathbb{C}}
\def\N{\mathbb{N}}
\def\Q{{\mathbb{Q}}}
\def\R{\mathbb{R}}
\def\Z{\mathbb{Z}}
\def\ker{{\rm Ker\,}}
\def\coker{{\rm Coker\,}}
\def\Hom{{\rm Hom}}
\def\Ext{{\rm Ext}}
\def\depth{{\rm depth}}
\def\id{{\rm id}}
\def\codim{{\rm codim\,}}
\def\tN{{\widetilde{N}}}
\def\rc{{\mathring{\tc}}}
\def\Tors{{\mathrm T}}
\def\Free{{\mathrm F}}
\def\pN{{\tN_{D\ov{p}/\ov{p}}}}
\def\pNc{{\tN_{D\ov{p}/\ov{p},c}}}
\def\pC{{\gC^*_{\ov{p}/D\ov{p},c}}}
\def\pH{{\mathscr{H}}_{D\ov{p}/\ov{p}}}
 \newcommand{\menos}{\backslash} 
 \def\Ker{{\rm Ker\,}}
\def\Coker{{\rm Coker\,}}
\def\Hom{{\rm Hom}}
\def\Ext{{\rm Ext}}
\begin{document}

\begin{abstract} 
Intersection homology with coefficients in a field restores Poincar\'e duality 
for some spaces with singularities, as stratified pseudomanifolds. But, with coefficients in a ring, 
the behaviours of manifolds and stratified pseudomanifolds are different. 
This work is an overview, with proofs and explicit examples, of  various possible  situations with their properties.

We first  set up  a duality, defined from a cap product, between two intersection cohomologies:
the first one arises from a linear dual and the second one from a simplicial blow up.
Moreover, from this property, Poincar\'e duality in intersection homology looks like 
the Poincar\'e-Lefschetz duality of a manifold with boundary.
Besides that, an investigation of the  coincidence of the two previous cohomologies  reveals that 
the only obstruction to the existence of a Poincar\'e duality  is the homology of a well defined complex. 
This recovers the case of  the peripheral sheaf introduced by Goresky and Siegel 
for compact PL-pseudomanifolds.
We also list a series of explicit computations of  peripheral intersection cohomology. In particular, we observe
that 
Poincar\'e duality can exist in the presence of torsion in the ``critical degree'' of the intersection homology of the links
of a stratified pseudomanifold.
\end{abstract}

\maketitle

\tableofcontents

\newpage
\section*{Introduction}

\begin{quote}
In this introduction, for sake of simplicity,  we  restrict the coefficients to $\Z$ and~$\Q$.
We consider also Goresky and MacPherson perversities, depending only on the codimensions of strata. 
A more general situation is handled in the text
and specified in the various statements. 
Recollections of definitions and main properties can be found in \secref{sec:back}.
\end{quote}

Let $M$ be a compact, $n$-dimensional, oriented manifold.
The famous 
Poincar\'e duality gives a non-singular pairing
$$H_k(M;\Q)\otimes H_{n-k}(M;\Q)\to \Q,$$
defined by the intersection product.
This feature has been extended to the existence of singularities by M. Goresky and R. MacPherson.
In \cite{GM1}, they introduce the \emph{intersection homology} associated to a perversity $\ov{p}$
and prove the existence of a non-singular pairing in intersection homology,
$$H_k^{\ov{p}}(X;\Q)\otimes H_{n-k}^{D\ov{p}}(X;\Q)\to \Q,$$
when $\ov{p}$ and $D\ov{p}$ are complementary perversities and $X$ is a 
compact, oriented, $n$-dimensional PL-pseudomanifold.
If we replace the field of rational numbers by the ring of integers, the situation becomes more complicated. 
In the case of a compact oriented manifold, we still have  non-singular pairings, 
\begin{equation}\label{equa:freemanifold}
\Free\,H_k(M;\Z)\otimes \Free\,H_{n-k}(M;\Z)\to \Z,
\end{equation}
between the torsion free  parts of homology groups, and
\begin{equation}\label{equa:torsionmanifold}
\Tors H_k(M;\Z)\otimes \Tors H_{n-k-1}(M;\Z)\to \Q/\Z
\end{equation}
between the torsion parts.
In contrast, these two properties can disappear in  intersection homology  as it has been 
discovered and studied by M.~Goresky and P.~Siegel in \cite{GS}. 
In their work, they define a class of compact PL-pseudomanifolds 
called \emph{locally $\ov{p}$-torsion free} (see \defref{def:locallyfree}) 
for which there exist non-singular pairings in intersection homology,
\begin{equation}\label{equa:free}
\Free\, H_k^{\ov{p}}(X;\Z)\otimes \Free\, H_{n-k}^{D\ov{p}}(X;\Z)\to \Z
\end{equation}
and
\begin{equation}\label{equa:torsion}
\Tors \, H_k^{\ov{p}}(X;\Z)\otimes \Tors\, H_{n-k-1}^{D\ov{p}}(X;\Z)\to \Q/\Z.
\end{equation}
But there are examples of PL-pseudomanifolds for which the previous pairings are singular,
as for example the Thom space associated to the tangent space of the 2-sphere for (\ref{equa:free}) and the suspension of the real projective space $\R P^3$ for  (\ref{equa:torsion}),
see Examples \ref{exam:thoms2} and \ref{exam:susprp3cie}.

Let us come back to recollections on Poincar\'e duality. For an  oriented, $n$-dimensional manifold,
$M$,  the cap product with the fundamental class is an isomorphism,
\begin{equation}\label{equa:capdual}
-\frown [M]\colon H^k_{c}(M;\Z)\xrightarrow{\cong}H_{n-k}(M;\Z),
\end{equation}
between  cohomology with compact supports and homology. The existence of the 
non-singular pairings
(\ref{equa:freemanifold}) and (\ref{equa:torsionmanifold}) are then consequences
 of (\ref{equa:capdual}) and  the universal coefficient formula.
 In intersection homology, this method was investigated
by G. Friedman and J.E. McClure in \cite{FM} and taken over in \cite[Section 8.2]{FriedmanBook}.
As cohomology groups, the authors consider the homology of the dual 
$ C^*_{\ov{p}}(X;\Z)=\Hom( C_*^{\ov{p}}(X;\Z),\Z)$ of the complex of $\ov{p}$-intersection chains
and, under the same restriction as in Goresky and Siegel's paper, they prove the existence
of an isomorphism induced by a cap product with a fundamental class,
\begin{equation}\label{equa:capiso}
 H^k_{\ov{p},c}(X;\Z)\xrightarrow{\cong} H_{n-k}^{D\ov{p}}(X;\Z),
\end{equation}
for a locally $\ov{p}$-torsion free, oriented, paracompact, $n$-dimensional stratified pseudomanifold $X$. 
With this restriction, the pairings (\ref{equa:free}) and (\ref{equa:torsion})
are then deduced from (\ref{equa:capiso}) and a formula of universal coefficients, as in the case of a manifold.

In \cite{CST2}, we take over the approach  (\ref{equa:capdual}) 
using blown-up cochains with compact supports, 
$\tN^*_{\ov{\bullet},c}(-)$,
that we have introduced and studied in  previous papers 
\cite{CST3}, \cite{CST6}, \cite{CST7},  \cite{CST5}, \cite{CST4}, \cite{CST1}  
(also called Thom-Whitney cochains in some of these  works).
One of their features  is the existence of cup and cap products
(see \cite{CST4} or \secref{sec:back})
for any ring of coefficients and
without any restriction on the stratified pseudomanifold.
Indeed we prove in \cite[Theorem B]{CST2} that, 
for any  oriented, paracompact, $n$-dimensional stratified pseudomanifold, $X$, and any perversity $\ov{p}$, 
the cap product with a fundamental class is an isomorphism,
\begin{equation}\label{equa:bup}
-\frown [X]\colon \crH^k_{\ov{p},c}(X;\Z)\xrightarrow{\cong}  H_{n-k}^{\ov{p}}(X;\Z),
\end{equation}
between the blown-up cohomology 
with compact supports $\crH^k_{\ov{p},c}(-)$ and the intersection homology.
The blown-up cohomology is not defined from the dual complex of intersection chains 
but proceeds from a simplicial blow up process recalled in \secref{sec:back}.
Thus, there is no universal coefficients formula between 
$\crH^*_{\ov{p}}(-)$ and $ H_*^{\ov{p}}(-)$
and we cannot deduce from (\ref{equa:bup})  a non-singular bilinear form
as in the classical case of a manifold.
In \cite[Theorem C]{CST2}, for a compact oriented stratified 
pseudomanifold $X$, we prove the non-degeneracy of the bilinear form 
\begin{equation}\label{equa:freepairing} 
\Phi_{\ov{p}} \colon  \Free\crH^k_{\ov{p}}(X;\Z)\otimes \Free\crH^{n-k}_{D\ov{p}}(X;\Z)\to \Z
\end{equation}
built from the cup product.
(There are examples of stratified pseudomanifolds where this bilinear form is singular,
see \cite[Example 4.10]{CST2} or \examref{exam:thoms2}.)
In contrast, there are examples (see \examref{exam:susprp3cie}) of the degeneracy of the associated bilinear form 
\begin{equation}\label{equa:torpairing}
L_{\ov{p}} \colon 
\Tors \crH^k_{\ov{p}}(X;\Z)\otimes \Tors \crH^{n+1-k}_{D\ov{p}}(X;\Z)\to \Q/\Z. 
\end{equation}
The existence of such examples is  not surprising: as the blown-up cohomology is isomorphic through (\ref{equa:bup})
to the intersection homology, the defect of duality detected by Goresky and Siegel is
also present in (\ref{equa:freepairing}) and (\ref{equa:torpairing}).
In sum, we have two intersection  cohomologies, $ H_{\ov{\bullet}}^{\ast}(-)$ and $\crH_{\ov{\bullet}}^{\ast}(-)$:
the first one has a universal coefficient formula and the second one 
satisfies the isomorphism (\ref{equa:bup}) through a cap product with a fundamental class.
But, as the quoted examples show, neither satisfies a Poincar\'e duality with cup products 
and coefficients in $\Z$, in all generality.
(However, the blown-up cohomology satisfies (\ref{equa:bup}) over $\Z$
without restriction on the torsion of links.)

This work is also concerned with not necessarily compact stratified pseudomanifolds and, for having
a complete record, let us also mention the existence of an isomorphism,
\begin{equation}\label{equa:bupBM}
-\frown [X]\colon \crH^k_{\ov{p}}(X;\Z)\xrightarrow{\cong}  H^{\infty,\ov{p}}_{n-k}(X;\Z),
\end{equation}
between the blown-up intersection cohomology and the Borel-Moore intersection homology,
(see \cite{ST1} or \cite{CST5} in the PL case) for any
paracompact, separable and oriented  stratified pseudomanifold of dimension $n$.

After this not so brief ``state of the art'', we present the results of this work.
\emph{The starting point is the existence of a duality between the two intersection cohomologies,}
developed in \secref{sec:capdual}.
To express it, we use the injective resolution,
$I_{\Z}^*\colon \Q \to \Q/\Z$,
 and the Verdier dual, $DA^*$,   
 defined as the Hom functor of a cochain complex $A^*$ with value in $I_{\Z}^*$,
 see (\ref{equa:dualcochain}).
  
\begin{theorem}{\rm{[\thmref{thm:duality}]}}\label{thm:poinclef}
Let $X$ be a  
paracompact, separable and oriented  stratified pseudomanifold of dimension $n$
and $\ov{p}$ be a perversity.
Then, there exist two quasi-isomorphisms, defined from the cap product
with a cycle representing the fundamental class $[X]\in H_n^{\infty,\ov{0}}(X;\Z)$,
$$
\crC_{\ov{p}}
\colon
 C^{\star}_{\ov{p}}(X;\Z)\to (D\tN^{*}_{\ov{p},c}(X;\Z))_{n-\star}
\text{ and }
\crN_{\ov{p}}
\colon
\tN^{\star}_{\ov{p}}(X;\Z)\to (D C^{*}_{\ov{p},c}(X;\Z)_{n-\star}.
$$
\end{theorem}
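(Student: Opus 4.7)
The strategy rests on interpreting each of the two maps as a cap-product pairing with a chain-level representative of the fundamental class, and then analyzing the induced map on cohomology through the universal coefficient properties encoded by the complex $I_\Z^*$.

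\emph{Construction.} Choose a cycle $\mu\in C_n^{\infty,\ov{0}}(X;\Z)$ representing $[X]$. The cup and cap product formalism of \cite{CST4} provides a cup pairing $C^{*}_{\ov p}\otimes \tN^{*}_{\ov p,c}\to C^{*}_{\ov p\pd \ov p}$; evaluating the resulting intersection cochain on $\mu$ and composing with the canonical inclusion $\Z\hookrightarrow I_\Z^*$ gives, for $\alpha\in C^k_{\ov p}(X;\Z)$ and $\beta\in\tN^{j}_{\ov p,c}(X;\Z)$, an element of $I_\Z^{k+j-n}$. Under the indexing convention of the statement, this assembles into $\crC_{\ov p}(\alpha)\in (D\tN^{*}_{\ov p,c}(X;\Z))_{n-k}$. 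The analogous construction with the roles of the two cochain theories interchanged produces $\crN_{\ov p}$. The Leibniz rule for the cup product and the cycle condition $\partial\mu=0$ force both maps to be chain maps.

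\emph{Quasi-isomorphism via universal coefficients.} Since $I_\Z^*$ is a length-two injective resolution of $\Z$, for any cochain complex $A^*$ of free $\Z$-modules the short exact sequence $0\to\Z\to\Q\to\Q/\Z\to 0$ yields a natural short exact sequence
$$
0\to \ext\bigl(H^{k+1}A^*,\Z\bigr)\to H^k(DA^*)\to \Hom\bigl(H^k A^*,\Z\bigr)\to 0.
$$
Applied to $A^*=\tN^{*}_{\ov p,c}(X;\Z)$, this expresses the cohomology of the codomain of $\crC_{\ov p}$ in terms of $\crH^{*}_{\ov p,c}(X;\Z)$, while the classical universal coefficient theorem for the free chain complex $C^{\ov p}_{*}(X;\Z)$ expresses the cohomology of its domain in terms of $H^{\ov p}_{*}(X;\Z)$. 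The cap-product isomorphism $-\frown[X]\colon \crH^{k}_{\ov p,c}(X;\Z)\xrightarrow{\cong}H^{\ov p}_{n-k}(X;\Z)$ recalled in (\ref{equa:bup}) makes the two sequences correspond degree by degree after the $n-\star$ reindexing. By construction $\crC_{\ov p}$ fits into a ladder between them, inducing on the $\Hom$-quotients the map dual to $-\frown[X]$ and on the $\ext$-subgroups its shift; the five-lemma then gives the quasi-isomorphism. The argument for $\crN_{\ov p}$ is parallel, invoking the Borel--Moore isomorphism (\ref{equa:bupBM}) together with the universal coefficient sequence for $\tN^{*}_{\ov p}(X;\Z)$.

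\emph{Main difficulty.} I expect the principal technical point to be the chain-level commutativity of the ladder comparing the two universal coefficient sequences. The cap product couples two rather different cochain complexes, and one must verify that it is compatible, up to chain homotopy, with the Bockstein connecting homomorphism associated with $0\to\Z\to\Q\to\Q/\Z\to 0$, so that the five-lemma applies. Once this compatibility is extracted from the internal symmetries of the cup/cap formalism of \cite{CST4}, the remainder of the argument is essentially formal.
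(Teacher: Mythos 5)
Your construction of the two cochain maps via the operation $c\mapsto (c\star\omega)(\gamma_X)$ (with $\gamma_X$ a cycle representing $[X]$) matches the paper's Proposition~\ref{prop:dualitycochains}, and you are right that the Leibniz rule together with $\gd\gamma_X=0$ makes them chain maps. The divergence is in the quasi-isomorphism step, and there your argument has a real gap.

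You propose a single global argument: set up the universal coefficient sequence for $\gC^*_{\ov p}(X;\Z)=\Hom(\gC^{\ov p}_*(X;\Z),\Z)$, the sequence~(\ref{equa:dualuniversel}) for $D\tN^*_{\ov p,c}(X;\Z)$, link their end terms by the cap-product isomorphism of~(\ref{equa:bup}), and conclude by the five lemma. The right-hand square of that ladder does commute directly from the definitions, since $\kappa(\crC_{\ov p}[c])([\omega])=\psi(c)(\omega)=c(\omega\frown\gamma_X)$ coincides with the dual of $-\frown\gamma_X$ composed with evaluation. But the left-hand square, at the $\Ext$ level, is exactly the point you leave open in your ``main difficulty'' paragraph, and it is not a formality that can be ``extracted from internal symmetries.'' Commutativity of the right square only tells you that $H(\crC_{\ov p})$ carries the $\Ext$-subgroup of the source into the $\Ext$-subgroup of the target; it does not identify that restriction with $\Ext$ of the duality isomorphism. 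Without that identification the five lemma does not apply. Moreover, globally the intersection (co)homology of $X$ need not be finitely generated, so these $\Ext$-groups need not even be torsion, and the convenient identification $\Ext(H,\Z)\cong\Tors H$ and the double-dual reasoning that would close the square are unavailable. This is precisely the technical obstruction the paper's proof is designed to avoid.

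The paper's route is different in structure. It never attempts a universal-coefficients ladder on all of $X$. Instead it invokes a Bredon-type local-to-global scheme (Lemmas~\ref{lem:bredon} and~\ref{lem:supbredon}): one verifies the statement on distinguished conical charts $\R^a\times\rc L$, checks compatibility with Mayer--Vietoris (which only requires the naturality of the choice of $\gamma_X$) and with disjoint unions, and then the lemma upgrades the local statement to the global one. On a conical chart the homologies are finitely generated, and there the paper does something close in spirit to your idea, but cleaner: it factors $\crC_{\ov p}$ as $\gC^*_{\ov p}\to DD\gC^*_{\ov p}\to D\tN^*_{\ov p,c}$, i.e.\ identifies $\crC_{\ov p}$ as the Verdier dual $D$ applied to the composite of the known cap-product quasi-isomorphism of~\cite{CST2} with the biduality map, and uses that $D$ preserves quasi-isomorphisms between free complexes with finitely generated homology. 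This sidesteps the Bockstein-compatibility question entirely. To repair your argument you would either have to establish the $\Ext$-level commutativity by hand (a genuine chain-level computation that the cited references do not hand you) or adopt the local-to-global reduction, under which finite generation is restored and your double-dual intuition becomes a theorem.
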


As a consequence, in the compact case, we deduce two non-singular pairings between the two intersection cohomologies,
\begin{equation}\label{equa:dualbupH}
\Free  H^k_{\ov{p}}(X;\Z)\otimes \Free \crH^{n-k}_{\ov{p}}(X;\Z)\to \Z
\text{ and }
\Tors  H^k_{\ov{p}}(X;\Z)\otimes \Tors \crH^{n-k+1}_{\ov{p}}(X;\Z)\to \Q/\Z.
\end{equation}

\emph{In a second step, we are looking for a quasi-isomorphism between
$\tN^*_{\ov{p}}(X;\Z)$ and $D\tN^*_{D\ov{p},c}(X;\Z)$.} This can be deduced from \thmref{thm:poinclef}
and the existence of a quasi-isomorphism between 
$\tN^*_{\ov{p}}(X;\Z)$ and  $ C^*_{D\ov{p}}(X;\Z)$.
For investigating that, we use the existence (see \propref{prop:chipq}) of a cochain map,
$\chi_{\ov{p}}\colon \tN^*_{\ov{p}}(X;\Z)\to  C^*_{D\ov{p}}(X;\Z)$,
and its version with compact supports,
$\chi_{\ov{p},c}\colon \tN^*_{\ov{p},c}(X;\Z)\to  C^*_{D\ov{p},c}(X;\Z)$.
So, by setting $\crD_{\ov{p}}= \crC_{D\ov{p}} \circ \chi_{\ov{p}}$, we get a cochain map,
\begin{equation}\label{equa:dualblownup}
\crD_{\ov{p}}\colon \tN^*_{\ov{p}}(X;\Z)\to D\tN^*_{D\ov{p},c}(X;\Z),
\end{equation}
which is a quasi-isomorphism if, and only if, the map $\chi_{\ov{p}}$ is a quasi-isomorphism.
Hence, the homotopy cofiber  of $\chi_{\ov{p}}$ in the category of cochain complexes plays a fundamental role
in Poincar\'e duality. 
We study it in \secref{sec:peripheralGS}. We call it
 the \emph{peripheral complex} and denote it and its homology by 
$R^*_{\ov{p}}$ and  $\crR^*_{\ov{p}}$, respectively.
(A brief analysis  shows that it
corresponds effectively to the global sections of the peripheral sheaf of \cite{GS}, in
the PL compact case.) 
This complex, which personifies the non-duality, owns itself a duality in the compact case.
To write it in our framework, we introduce the compact supports analogues,
$R^*_{\ov{p},c}$ and  $\crR^*_{\ov{p},c}$,
of $R^*_{\ov{p}}$ and  $\crR^*_{\ov{p}}$.

\begin{theorem}{\rm{[\thmref{thm:dualityperiph}]}}\label{thm:periph}
Let $X$ be a  
paracompact, separable and oriented stratified pseudomanifold of dimension $n$
and  $\ov{p}$ be a perversity.
Then, there exists a quasi-isomorphism,
$$R^{\star}_{\ov{p}}(X;\Z)\to (DR^{*}_{D\ov{p},c}(X;\Z))_{n-\star-1}.$$
\end{theorem}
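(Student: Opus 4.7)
The plan is to realise $R^*_{\ov{p}}(X;\Z)$ and $R^*_{D\ov{p},c}(X;\Z)$ as mapping cones of $\chi_{\ov{p}}$ and $\chi_{D\ov{p},c}$, and to promote the two Poincar\'e duality quasi-isomorphisms of \thmref{thm:poinclef} to a quasi-isomorphism between these cones. From the defining cofiber triangle
$$\tN^*_{\ov{p}}(X;\Z)\xrightarrow{\chi_{\ov{p}}} C^*_{D\ov{p}}(X;\Z)\to R^*_{\ov{p}}(X;\Z)$$
and the analogous one for $R^*_{D\ov{p},c}$, applying the exact contravariant functor $D$ to the second triangle produces a fiber sequence
$DR^*_{D\ov{p},c}\to DC^*_{\ov{p},c}\xrightarrow{D\chi_{D\ov{p},c}} D\tN^*_{D\ov{p},c}$,
identifying $DR^*_{D\ov{p},c}$, up to a shift by $-1$, with the cone of $D\chi_{D\ov{p},c}$. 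This shift combined with the Poincar\'e duality regrading by $n$ accounts exactly for the index $n-\star-1$ in the statement.

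The crucial technical step is to verify that the square
\begin{equation*}
\begin{array}{ccc}
\tN^*_{\ov{p}}(X;\Z) & \xrightarrow{\chi_{\ov{p}}} & C^*_{D\ov{p}}(X;\Z) \\
\crN_{\ov{p}}\big\downarrow & & \big\downarrow \crC_{D\ov{p}} \\
(DC^*_{\ov{p},c}(X;\Z))_{n-\star} & \xrightarrow{(D\chi_{D\ov{p},c})_{n-\star}} & (D\tN^*_{D\ov{p},c}(X;\Z))_{n-\star}
\end{array}
\end{equation*}
commutes up to cochain homotopy. By construction the composition $\crC_{D\ov{p}}\circ\chi_{\ov{p}}$ is the map $\crD_{\ov{p}}$ of (\ref{equa:dualblownup}), so it suffices to show that $(D\chi_{D\ov{p},c})_{n-\star}\circ\crN_{\ov{p}}$ also coincides with $\crD_{\ov{p}}$ up to homotopy. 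Unravelling the definitions, this amounts to comparing, for $\alpha\in\tN^*_{\ov{p}}$ and $\sigma\in\tN^*_{D\ov{p},c}$, the two pairings
$(\alpha,\sigma)\mapsto\langle\chi_{\ov{p}}(\alpha)\frown[X],\sigma\rangle$
and
$(\alpha,\sigma)\mapsto\langle\alpha\frown[X],\chi_{D\ov{p},c}(\sigma)\rangle$,
which agree up to chain homotopy by the compatibility of the natural transformation $\chi$ of \propref{prop:chipq} with the cap product between blown-up cochains, linear intersection cochains, and a fundamental cycle.

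Once the square is known to commute up to homotopy, the proof concludes by a standard cofiber argument: both vertical arrows are quasi-isomorphisms by \thmref{thm:poinclef}, so the induced map between the mapping cones of the two horizontal arrows is a quasi-isomorphism (apply the five-lemma to the induced long exact sequences in cohomology). The top cone is $R^*_{\ov{p}}(X;\Z)$ by definition, and the bottom cone is $(DR^*_{D\ov{p},c}(X;\Z))_{n-\star-1}$ by the identification recorded in the first paragraph. The main obstacle is this chain-level homotopy-commutativity: although morally it is the self-adjointness of $\chi$ under Poincar\'e duality, producing an explicit homotopy requires careful bookkeeping of signs and of the Eilenberg--Zilber-type formulas governing the cap product on both the blown-up and the linear intersection complexes.
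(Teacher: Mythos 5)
Your proposal is essentially the paper's proof: form the cube whose front square joins $\chi_{\ov{p}}$ to $D\chi_{D\ov{p},c}$ through the duality quasi-isomorphisms $\crN_{\ov{p}}$ and $\crC_{D\ov{p}}$ of \thmref{thm:duality}, induce a map $\varphi_{\ov{p}}$ between the two mapping cones, and conclude by the five lemma. The one step you defer as ``elaborate homotopy bookkeeping'' is in fact a short direct check: using the associativity $c\star(\omega\smile\eta)=(c\star\omega)\star\eta$ and the definitions, the composite $\crC_{D\ov{p}}\circ\chi_{\ov{p}}$ sends $\omega$ to $\omega'\mapsto(\varepsilon\star(\omega\smile\omega'))(\gamma_X)$ while $D\chi_{D\ov{p},c}\circ\crN_{\ov{p}}$ sends $\omega$ to $\omega'\mapsto(-1)^{|\omega||\omega'|}(\varepsilon\star(\omega'\smile\omega))(\gamma_X)$, so the commutativity of the front square reduces to the (graded) commutativity of the blown-up cup product, with the Koszul sign exactly absorbed by the factor $(-1)^{|\omega||c|}$ built into $\crN_{\ov{p}}$; this is what the paper means by ``by construction, the front square commutes,'' and it is worth writing out rather than postponing.
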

 
We also describe some properties of this complex, established 
in \cite{GS}  for PL compact stratified pseudomanifolds. 
For instance, as $\chi_{\ov{p}}$ induces a quasi-isomorphism when the ring of coefficients is a field,
the homology $\crR^*_{\ov{p}}(X;\Z)$ is entirely torsion.
 As the nullity of $\crR^*_{\ov{p}}(X;\Z)$  is a sufficient and necessary condition for having the
 quasi-isomorphism $\crD_{\ov{p}}$, we may enquire what means the
 ``locally $\ov{p}$-torsion free''  requirement appearing in \cite{GS} and \cite{FM}.
 In \propref{prop:localfreeacyclic}, we  show that it is equivalent to the nullity
 of the peripheral cohomology $\crR^*_{\ov{p}}(U;\Z)$ for \emph{any open subset of $X$.} 
 \examref{exam:periphnofree} shows that this last property
 is not necessary for getting the quasi-isomorphism $\crD_{\ov{p}}$.

Suppose $\ov{p}\leq D\ov{p}$. We denote
 by $\pN(X;R)$ the homotopy cofiber of the inclusion of cochain complexes,  
$\tN_{\ov{p}}^*(X;R)\to \tN_{D\ov{p}}^*(X;R)$
and by $\pNc(X;R)$ the compact support version of it.
 In \cite[Lemma 3.7]{MR3028755}, G. Friedman and E. Hunsicker
 prove that the homology analogue of this relative complex owns a self-duality  for  compact PL-pseudomanifolds
 and intersection homology with rational coefficients.
 In \secref{sec:relativeDp}, we extend this result to 
 paracompact, separable and oriented  stratified pseudomanifolds of dimension $n$, $X$.
Denote by $C^*_{\ov{p}/D\ov{p},c}(X;\Z)$ the cofiber of the inclusion
$ C_{D\ov{p},c}^*(X;\Z)\to  C^*_{\ov{p},c}(X;\Z)$.
In \propref{prop:petitpasDpp}, we get a quasi-isomorphism, similar to the one of
\thmref{thm:duality},
$$
\tN_{D\ov{p}/\ov{p}}^{\star}(X;\Z)
\to (DC^*_{\ov{p}/D\ov{p},c})_{n-\star}.
$$
Next, if $\chi_{\ov{p}}$ and  $\chi_{\ov{p},c}$ are quasi-isomorphisms,  
 we prove the existence of a quasi-isomorphism,
 $$\pN^{\star}(X;\Z)\to (D \pNc^{*}(X;\Z))_{n-\star-1},$$
 which gives back the self-duality of \cite{MR3028755}, see \corref{cor:petitpasDpp}. 
  
  \medskip
 In \secref{sec:periphmas}, we study some components of the peripheral cohomology
 for compact oriented stratified pseudomanifolds. 
The   pairings deduced from (\ref{equa:dualbupH}) are investigated separately
 for the existence of  non-singular pairings in
the torsion or in the torsion free parts. 
In \secref{sec:examples}, examples of the different possibilities are described.
In particular, \examref{exam:periphnofree}  is a 
not locally $\ov{p}$-torsion free stratified pseudomanifold with 
Poincar\'e duality over $\Z$.
Finally, let us emphasize that most of the duality results in Sections 
\ref{sec:capdual}, \ref{sec:peripheralGS}, \ref{sec:relativeDp}  do not require an hypothesis of finitely
generated homology.

\medskip
\paragraph{\bf Notations and conventions}
 In  this work, 
homology and cohomology are considered with coefficients in a principal ideal domain, $R$, 
or in its field of fractions $QR$ and, if there is no ambiguity, 
we do not mention the coefficient explicitly in the proofs.
For any $R$-module, $A$,
we denote by $\Tors A$ the \emph{$R$-torsion submodule} of $A$ and by 
$\Free A=A/\Tors A$ the $R$-torsion free quotient of $A$. Recall that a pairing 
$A\otimes B\to R$ is \emph{non-degenerate} if the two adjunction maps, 
$A\to \Hom (B,R)$ and $B\to \Hom(A,R)$,
are injective. The pairing is \emph{non-singular} if they are both isomorphisms.

For any topological space $X$, we denote by $\tc X=X\times [0,1]/X\times\{0\}$
the cone on $X$
and by $\rc X=X\times [0,1[/X\times\{0\}$ the open cone on $X$.
Elements of the cones are denoted $[x,t]$ and the apex is $\tv=[-,0]$.

In the previous introduction, $H_{i}^{\ov{p}}(-)$ denotes the intersection homology of
\cite{GM1} or \cite{MR642001}. It can be obtained from the chain complex of filtered simplices
of \defref{def:filteredsimplex}, see \cite[Proposition A.29]{CST1}.
The perversities used in this work are completely general: they are defined on the set of strata
and do not only depend on the codimension. Moreover, we lift any restriction on the values taken by
a perversity. An issue of that freedom is that an allowable simplex in the sense of 
\cite{GM1} or \cite{MR642001}
may have a support totally included in the singular subset. 
This has bad consequences, as the breakdown of Poincar\'e duality. To overcome this failure,
we use a complex built from filtered simplices that are not totally included in the singular set,
see \remref{rem:nongm}.
As it differs from the complex of
\cite{GM1} or \cite{MR642001},
we denote it by $\gC_{*}^{\ov{p}}(-)$ and its homology by $\gH_{*}^{\ov{p}}(-)$.
We emphasize that for the original perversities of the loc. cit. references, we have
$C_{*}^{\ov{p}}(-)= \gC_{*}^{\ov{p}}(-)$
and
$H_{*}^{\ov{p}}(-)= \gH_{*}^{\ov{p}}(-)$.
Simply, our approach allows an extension of the original historical definition that leaves it unchanged.
Thus we call it intersection homology without ambiguity.

The dual complex $\gC^*_{\ov{p}}(-)=\Hom(\gC_{*}^{\ov{p}}(-),R)$  gives birth to a cohomology $\gH^*_{\ov{p}}(-)$. As explained
before in the introduction, this cohomology does not
satisfy a Poincar\'e duality, through a cap product, with intersection homology for any coefficients. For having this property,
we use a cohomology constructed from a simplicial blow up.
For a clear distinction with the previous cohomology obtained with a linear dual, we 
 denote $\crH^*_{\ov{p}}(-)$ the blown-up cohomology and $\tN^*_{\ov{p}}(-)$ its corresponding cochain complex .

\section{Background}\label{sec:back}

\begin{quote}
We recall the basics we need, sending the reader to   \cite{CST1}, \cite{CST4}, \cite{FriedmanBook} or \cite{GM1}, 
for more details.
\end{quote}

\subsection{\tt Pseudomanifolds}
First come the geometrical objects, the stratified pseudomanifolds. In this work, we authorize 
them to have strata of codimension~1.

\begin{definition}\label{def:pseudomanifold}
A \emph{topological stratified pseudomanifold of dimension $n$} (or a stratified pseudomanifold) is 
a Hausdorff  space
together with a filtration by closed subsets,
$$
X_{-1}=\emptyset\subseteq X_0 \subseteq X_1 \subseteq \dots \subseteq X_{n-2} \subseteq X_{n-1} \subsetneqq X_n =X,
$$
such that, for each $i\in\{0,\dots,n\}$, 
$X_i\backslash X_{i-1}$ is a topological manifold of dimension $i$ or the empty set. 
The subspace $X_{n-1}$ is called \emph{the singular set} and each 
point $x \in X_i \backslash X_{i-1}$ with $i\neq n$ admits
\begin{enumerate}[(i)]
\item an open neighborhood $V$ of $x$ in $X$, endowed with the induced filtration,
\item an open neighborhood $U$ of $x$ in  $X_i\backslash X_{i-1}$, 
\item a  compact stratified pseudomanifold $L$  of dimension  $n-i-1$, whose cone $\rc L$ is endowed with the conic filtration, 
$(\rc L)_{i}=\rc L_{i-1}$,
\item a homeomorphism, $\varphi \colon U \times \rc L\to V$, 
such that
\begin{enumerate}[(a)]
\item $\varphi(u,\tv)=u$, for any $u\in U$, where $\tv$  is the apex of $\rc L$,
\item $\varphi(U\times \rc L_{j})=V\cap X_{i+j+1}$, for any $j\in \{0,\dots,n-i-1\}$.
\end{enumerate}
\end{enumerate}
A topological stratified pseudomanifold of dimension~0 is a discrete set of points.

The stratified pseudomanifold $L$ is called the \emph{link} of $x$. 
The connected components $S$ of $X_{i}\backslash X_{i-1}$ are the  \emph{strata} of $X$ of dimension~$i$.
The strata of dimension $n$  are said to be \emph{regular} and we denote by
$\cS_X$ (or $\cS$ if there is no ambiguity) the set of non-empty strata. 
We have proven in \cite[Proposition A.22]{CST1} that  $S\preceq S'$ if, and only if,   $S\subset \overline{S'}$, 
defines an order relation. We also denote $S\prec S'$ if $S\preceq S'$ and $S\neq S'$.
\end{definition}

\defref{def:pseudomanifold} of stratified pseudomanifold is slightly more general 
than the one in \cite{GM2} where it is supposed $X_{n-1}=X_{n-2}$.
In this work, we are concerned with Poincar\'e duality and general perversities, 
for which the previous restriction is not necessary.
On the other hand, the hypothesis $X_{n}\neq X_{n-1}$ implies that the links 
of the singular strata are always non-empty sets, therefore
$X_{n}\menos X_{n-1}$ is dense in $X$.
This infers a ``good'' notion of dimension on $X$ which is the relevant point in  \cite[Page 82]{GM2}
and motivates us for keeping the appellation of pseudomanifold in this case.

  \begin{example}
  Among stratified pseudomanifolds, let us quote the manifolds,  the open subsets of a stratified pseudomanifold 
  (with the induced structure), the cones on  compact manifolds with the singular set reduced to the apex, 
  the Thom spaces filtered by the compactification point.
  As relevant examples of spaces admitting a structure of stratified pseudomanifolds, 
  we may also take over the list of \cite{GM2}:
  complex algebraic varieties, complex analytic varieties, real analytic varieties, Whitney stratified sets,
  Thom-Mather stratified spaces.
  For instance, the following picture represents the real part of the 
  hypersurface of $\C^3$ called
  Whitney cusp, 
   with its stratification,

 \centerline{ \includegraphics[scale=0.7]{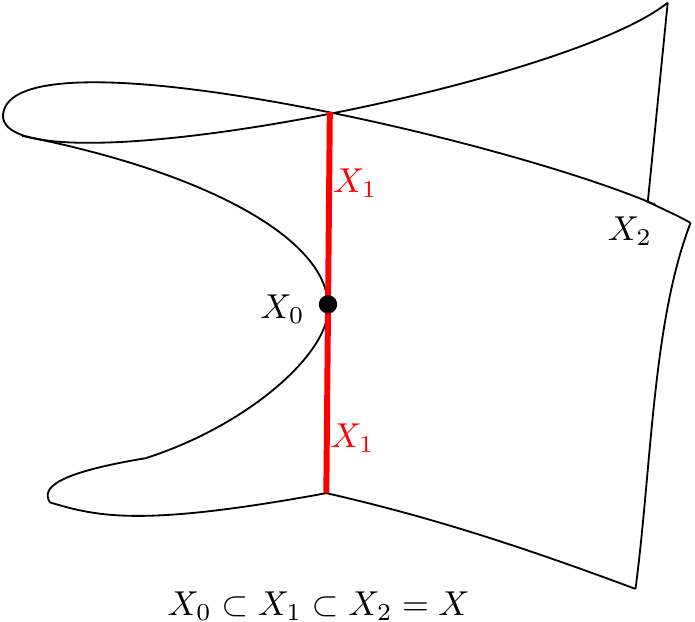}} 
  \end{example}

 \begin{definition}\label{depth}
 The \emph{depth} of a topological stratified pseudomanifold $X$ is the largest integer $\ell$ 
 for which there exists a chain of strata,
 $
 S_0 \prec S_1 \prec \cdots \prec S_\ell.
 $
 It is denoted by $\depth \,X$.
 \end{definition}
 In particular, $\depth \,X=0$ if, and only if, all the strata of $X$ are regular.

\subsection{\tt Perversity}
The second concept in intersection homology is that of perversity. We consider the perversities of \cite{MacPherson90}
defined on each stratum. They are already used in \cite{MR1245833}, \cite{MR2210257}, \cite{FR2}, 
  \cite{MR2796412},
\cite{FM}.

\begin{definition}\label{def:perversite} 
A  \emph{perversity on a stratified pseudomanifold,} $X$, is a map,
$\ov{p}\colon \cS_X\to \Z$, defined on the set of strata of $X$ and taking the value~0 on the regular strata.
The pair $(X,\ov{p})$ is called a \emph{perverse pseudomanilfold.}
If $\ov{p}$ and $\ov{q}$ are two perversities on $X$, we set  $\ov{p}\leq \ov{q}$ if we have
$\ov{p}(S)\leq\ov{q}(S)$, for all $S\in \cS_{X}$.
\end{definition}

Among perversities, there are those considered in \cite{GM1} and whose values depend only on the codimension of
the strata. 

\begin{definition}\label{def:GMperversite}
A \emph{{\rm GM}-perversity} is a map $\ov{p}\colon \N\to\Z$ such that
$\ov{p}(0)=\ov{p}(1)=\ov{p}(2)=0$ and
$\ov{p}(i)\leq\ov{p}(i+1)\leq\ov{p}(i)+1$, for all $i\geq 2$.
As particular case, we have the null perversity~$\ov{0}$ constant with value~0 and the \emph{top perversity} defined by
$\ov{t}(i)=i-2$ if $i\geq 2$. 
For any perversity, $\ov{p}$, the perversity $D\ov{p}:=\ov{t}-\ov{p}$ is called the
\emph{complementary perversity} of $\ov{p}$.
A GM-perversity induces a perversity on $X$ by $\ov{p}(S)=\ov{p}(\codim S)$.
\end{definition}

\begin{example}
Let us mention the lower-middle and the upper-middle perversities, respectively defined on the singular strata
by
 $$\ov{m}(S)=\left\lfloor \frac{(\codim S)-2}{2} \right\rfloor
 \text{ and }
\ov{n}(S)=D\ov{m}(S)=\left\lceil \frac{(\codim S)-2}{2} \right\rceil ,
 $$
 which play an important role in intersection homology.  
 They coincide for Witt spaces (\cite[Definition 11.1]{GS})
  and, for them, a non-singular  pairing exists in intersection homology with rational coefficients, 
  see \cite{GS}. 
  For instance, this is the case for the Thom space of the tangent bundle of the 2-sphere and there is a pairing induced by 
  the cup product,
  $\crH_{\ov{m}}^k(X;\Q) \otimes \crH_{\ov{m}}^{n-k}(X;\Q)
   \to
   \Q$.
  Its behaviour with integer coefficients is analyzed in \cite[Example 4.10]{CST2}.
 \end{example}
\subsection{\tt Intersection Homology}\label{subsec:homologieintersection}

We specify the chain complex used for the determination of intersection homology of a stratified pseudomanifold $X$
equipped with a perversity $\ov{p}$,
cf. \cite{CST3}.

\begin{definition}\label{def:filteredsimplex}
A  \emph{filtered simplex} is a continuous map $\sigma\colon\Delta\to X$, 
from a Euclidean simplex endowed with a decomposition
$\Delta=\Delta_{0}\ast\dots\ast\Delta_{n}$,
 called \emph{$\sigma$-decomposition of $\Delta$},
such that
$
\sigma^{-1}X_{i} =\Delta_{0}\ast\dots\ast\Delta_{i}
$,
for all~$i \in \{0, \dots, n\}$, where $\ast$ denotes the join. 
The sets  $\Delta_{i}$  may be empty, with the convention $\emptyset * Y=Y$, for any space $Y$. 
The simplex $\sigma$ is  \emph{regular} if $\Delta_{n}\neq\emptyset$. A chain is  \emph{regular} if it is a linear combination of regular simplices. 
\end{definition}

Given a Euclidean regular simplex $\Delta = \Delta_0 * \dots *\Delta_n$, we consider   
as ``boundary'' of $\Delta$ the regular part
$\gd\Delta$  of the chain $\partial \Delta$.  
That is
$\gd \Delta =\partial (\Delta_0 * \dots * \Delta_{n-1})* \Delta_n$, if $|\Delta_n| = 0 $, or $\gd \Delta = \partial \Delta$,
if $|\Delta_n|\geq 1$.
For any regular simplex $\sigma \colon\Delta \to X$, we set  $\gd \sigma=\sigma_* \circ \gd$
 and denote by $\gC_{*}(X;R)$ 
 the  complex of  linear combinations of regular  simplices (called finite chains)
with the differential $\gd$.

 \begin{definition}\label{def:lessimplexes}
 The \emph{perverse degree of} a filtered simplex $\sigma\colon\Delta=\Delta_{0}\ast \dots\ast\Delta_{n} \to X$
  is the $(n+1)$-uple,
$\|\sigma\|=(\|\sigma\|_0,\dots,\|\sigma\|_n)$,  
with 
 $\|\sigma\|_{i}=
\dim (\Delta_{0}\ast\dots\ast\Delta_{n-i})$
 and the  convention $\dim \emptyset=-\infty$.
The \emph{perverse degree of $\sigma$ along a stratum $S$} is defined by
  $$\|\sigma\|_{S}=\left\{
 \begin{array}{cl}
 -\infty,&\text{if } S\cap \sigma(\Delta)=\emptyset,\\
 \|\sigma\|_{\codim S},&\text{otherwise.}
  \end{array}\right.$$
 A \emph{regular simplex is $\ov{p}$-allowable} if
  \begin{equation}\label{equa:admissible}
  \|\sigma\|_{S}\leq \dim \Delta-\codim S+\ov{p}(S)
  =\dim\Delta-D\ov{p}(S)-2,
  \end{equation}
   for each stratum $S$ of $X$. A chain $\xi$ is 
   \emph{$\ov{p}$-allowable} if it is a linear combination of $\ov{p}$-allowable simplices,
   and of  \emph{$\ov{p}$-intersection} if  $\xi$ and its boundary $\gd\xi$ are $\ov{p}$-allowable.
   Let
$\gC_{*}^{\ov{p}}(X;R)$ be
the complex  of  $\ov{p}$-intersection chains and  
 $\gH^{\ov{p}}_*(X;R)$ its homology, called \emph{$\ov{p}$-intersection homology}.
 \end{definition}
 
 \begin{remark}\label{rem:nongm}
  This homology is called tame intersection homology in \cite{CST3}
  and non-GM intersection homology in  \cite{FriedmanBook}, see \cite[Theorem B]{CST3}.
  It coincides with the
 intersection homology for the original perversities of \cite{GM1}, see \cite[Remark 3.9]{CST3}.
 \end{remark}

We introduce also  
the complex  $\gC^{\infty,\ov{p}}_{*}(X;R)$ of  locally finite   chains of $\ov{p}$-intersection
with the differential $\gd$.
If  $X$ is locally compact, metrizable and separable, this complex is isomorphic 
(see \cite[Proposition 3.4]{CST5}) to the inverse limit,
 $$\gC^{\infty,\ov{p}}_{*}(X;R) \cong \varprojlim_{K\subset X}\gC^{\ov{p}}_{*}(X,X\menos K;R),
 $$
where $K$ runs over the compact subsets of $X$.
Its homology, 
$\gH^{\infty,\ov{p}}_{*}(X;R)$,
is called \emph{the locally finite (or Borel-Moore) $\ov p$-intersection homology.}

\subsection{\tt Blown-up cohomology}
Let   
$N^*(\Delta)$ be the  simplicial  cochain complex
of  a Euclidean simplex $\Delta$, with coefficients in $R$. 
Given a face $F$  of $\Delta$, we write $\1_{F}$ for the element of $N^*(\Delta)$ 
taking the value 1 on $F$ and 0 otherwise. 
We denote also by $(F,0)$ the same face viewed as face of the cone $\tc\Delta=[\tv]\ast \Delta $ and by $(F,1)$ 
the face $\tc F$ of $\tc \Delta$. 
The apex is denoted $(\emptyset,1)=\tc \emptyset =[\tv]$.
Cochains on the cone $\tc\Delta$ are denoted $\1_{(F,\varepsilon)}$ with $\varepsilon =0$ or $1$.
If $\Delta=\Delta_{0}\ast \dots\ast\Delta_{n} $, let us set
$$\tN^*(\Delta)=N^*(\tc\Delta_0)\otimes\dots\otimes N^*(\tc\Delta_{n-1})\otimes N^*(\Delta_n).$$
A basis of $\tN^*(\Delta)$ is formed of the elements 
$\1_{(F,\varepsilon)}=\1_{(F_{0},\varepsilon_{0})}\otimes\dots\otimes \1_{(F_{n-1},\varepsilon_{n-1})}\otimes \1_{F_{n}}$,
 where 
$\varepsilon_{i}\in\{0,1\}$ and
$F_{i}$ is a face of $\Delta_{i}$ for $i\in\{0,\dots,n\}$ or the empty set with $\varepsilon_{i}=1$ if $i<n$.
We set
$|\1_{(F,\varepsilon)}|_{>s}=\sum_{i>s}(\dim F_{i}+\varepsilon_{i})$.

\begin{definition}\label{def:degrepervers}
Let $\ell\in \{1,\ldots,n\}$.
The  \emph{$\ell$-perverse degree} of 
$\1_{(F,\varepsilon)}\in \tN^*(\Delta)$ is
$$
\|\1_{(F,\varepsilon)}\|_{\ell}=\left\{
\begin{array}{ccl}
-\infty&\text{if}
&
\varepsilon_{n-\ell}=1,\\
|\1_{(F,\varepsilon)}|_{> n-\ell}
&\text{if}&
\varepsilon_{n-\ell}=0.
\end{array}\right.$$
For a cochain $\omega = \sum_b\lambda_b \ \1_{(F,\varepsilon)_{b} }\in\tN^*(\Delta)$ with 
$\lambda_{b}\neq 0$ for all $b$,
the \emph{$\ell$-perverse degree} is
$$\|\omega \|_{\ell}=\max_{b}\|\1_{(F,\varepsilon)_{b}}\|_{\ell}.$$
By convention, we set $\|0\|_{\ell}=-\infty$.
\end{definition}

Let $\sigma\colon \Delta=\Delta_0\ast\dots\ast\Delta_n\to X$ be a filtered simplex.
We set $\tN^*_{\sigma}=\tN^*(\Delta)$.
If $\delta_{\ell}\colon \Delta' 
\to\Delta$ 
 is  an inclusion of a face of codimension~1,
  we have 
$\partial_{\ell}\sigma=\sigma\circ\delta_{\ell}\colon 
\Delta'\to X$.
If $\Delta=\Delta_{0}\ast\dots\ast\Delta_{n}$ is filtered, the induced filtration on $\Delta'$ is denoted
$\Delta'=\Delta'_{0}\ast\dots\ast\Delta'_{n}$ and
$\partial_{\ell}\sigma$ is a filtered simplex.
The \emph{blown-up intersection complex} of $X$ is the cochain complex 
$\tN^*(X)$ composed of the elements $\omega$
associating to each regular filtered simplex
 $\sigma\colon \Delta_{0}\ast\dots\ast\Delta_{n}\to X$
an element
 $\omega_{\sigma}\in \tN^*_{\sigma}$  
such that $\delta_{\ell}^*(\omega_{\sigma})=\omega_{\partial_{\ell}\sigma}$,
for any face operator
 $\delta_{\ell}\colon\Delta'\to\Delta$
 with $\Delta'_{n}\neq\emptyset$. 
 The differential $d \omega$ is defined by
 $(d \omega)_{\sigma}=d(\omega_{\sigma})$.
 The \emph{perverse degree of $\omega$ along a singular stratum $S$} equals
 $$\|\omega\|_S=\sup\left\{
 \|\omega_{\sigma}\|_{\codim S}\mid \sigma\colon \Delta\to X \;
 \text{regular such that }
 \sigma(\Delta)\cap S\neq\emptyset
 \right\}.$$
 We denote $\|\omega\|$ the map which associates $\|\omega\|_S$
 to any singular stratum $S$ and~0 to any regular one.
 A \emph{cochain $\omega\in\tN^*(X;R)$ is $\ov{p}$-allowable} if $\| \omega\|\leq \ov{p}$ 
 and of \emph{$\ov{p}$-intersection} if $\omega$ and $d\omega$ are $\ov{p}$-allowable. 
 Let $\tN^*_{\ov{p}}(X;R)$ be the complex of $\ov{p}$-intersection cochains and  
 $\crH_{\ov{p}}^*({X};R)$ its homology, called
 \emph{blown-up $\ov{p}$-intersection cohomology}  of $X$.

Finally, we mention the existence of a version with compact supports,
$\tN^*_{\ov{p},c}(X;R)$ and $\crH^*_{\ov{p},c}(X;R)$,
whose properties have been established in  \cite{CST2}. 

\subsection{\tt Products}\label{subsec:products}
Let  $X$ be a stratified pseudomanifold equipped with
two perversities, $\ov{p}$ and $\ov{q}$.
 In \cite[Proposition 4.2]{CST4}, we prove the existence of a map 
 \begin{equation}\label{equa:cupprduitespacefiltre}
 -\smile -\colon\tN^i_{\ov{p}}(X;R)\otimes \tN^{j}_{\ov{q}}(X;R)\to \tN^{i+j}_{\ov{p}+\ov{q}}(X;R),
 \end{equation} 
 inducing  an associative  and  commutative graded product, called \emph{intersection cup product,}
  \begin{equation}\label{equa:cupprduitTWcohomologie}
-\smile -\colon \crH^i_{\ov{p}}(X;R)\otimes \crH^{j}_{\ov{q}}(X;R)\to
 \crH^{i+j}_{\ov{p}+\ov{q}}(X;R).
 \end{equation}
 We mention also from \cite[Propositions 6.6 and 6.7]{CST4} the existence of cap products,
\begin{equation}\label{equa:caphomology}
 -\frown - \colon \widetilde{N}_{\overline{q}}^{i}(X;R)\otimes {\gC}_{j}^{\overline{p}}(X;R)
\to {\gC}_{j-i}^{\overline{p}+\overline{q}}(X;R),
\end{equation}
such that $
(\eta\smile \omega)\frown \xi=\eta\frown(\omega\frown\xi)$. 
(By definition, we say that  the collection $\{{\gC}_{*}^{\overline{p}}(X;R)\}_{\overline{p}\in\mathcal{P}}$
is a left perverse module over the perverse algebra $\left\{ \widetilde{N}_{\overline{q}}^{*}(X;R)\right\} _{\overline{q}\in\mathcal{P}}$.)
Moreover, 
we have
\begin{equation}\label{equa:capdiff}
\gd(\omega\frown \xi)=d\omega\frown\xi+(-1)^{|\omega|}\omega\frown \gd \xi
\end{equation}
 and the cap product induces a map in homology,
\begin{equation}\label{equa:caphomologyhomology}
 - \frown - \colon \crH_{\overline{q}}^{i}(X;R)\otimes {\gH}_{j}^{\overline{p}}(X;R)
\to {\gH}_{j-i}^{\overline{p}+\overline{q}}(X;R).
\end{equation}
The map \eqref{equa:caphomology} can be extended to maps,
\begin{eqnarray}
-\frown - 
&\colon&
 \widetilde{N}_{\overline{p}}^{i}(X;R)\otimes {\gC}_{j}^{\infty,\overline{q}}(X;R)
\to {\gC}_{j-i}^{\infty,\overline{p}+\overline{q}}(X;R),\\
-\frown - 
&\colon&
 \widetilde{N}_{\overline{p},c}^{i}(X;R)\otimes {\gC}_{j}^{\infty,\overline{q}}(X;R)
\to {\gC}_{j-i}^{\overline{p}+\overline{q}}(X;R),
\end{eqnarray} 
which induce,
\begin{eqnarray}
- \frown - 
& \colon &
 \crH_{\overline{p}}^{i}(X;R)\otimes {\gH}_{j}^{\infty,\overline{q}}(X;R)
\to {\gH}_{j-i}^{\infty,\overline{p}+\overline{q}}(X;R),\\
 -\frown - 
 &\colon&
  \crH_{\overline{p},c}^{i}(X;R)\otimes {\gH}_{j}^{\infty,\overline{q}}(X;R)
\to {\gH}_{j-i}^{\overline{p}+\overline{q}}(X;R).
\end{eqnarray}

\subsection{\tt A second cohomology coming from a linear dual}
Let $X$ be a stratified pseudomanifold  with a perversity $\ov{p}$.
We set $$\gC^*_{\ov{p}}(X;R)=\Hom_{R}(\gC_*^{\ov{p}}(X;R),R)$$
with the differential 
$\gd c(\xi)=-(-1)^{|c|}c(\gd \xi)$.
The homology of $\gC^*_{\ov{p}}(X;R)$ is denoted $\gH^*_{\ov{p}}(X;R)$ (or $\gH^*_{\ov{p}}(X)$ 
if there is no ambiguity) and called 
\emph{$\ov{p}$-intersection cohomology.}
From \remref{rem:nongm} and
the Universal Coefficients Theorem \cite[Theorem 7.1.4]{FriedmanBook},
we deduce that this cohomology coincides with the non-GM cohomology of \cite{FriedmanBook}.

The cap product (\ref{equa:caphomology}) defines a star map
\begin{equation}\label{equa:capcohomology}
\star\colon
{\gC}^{j}_{\overline{p}}(X;R)
\otimes
 \widetilde{N}_{\overline{q}}^{i}(X;R)
 \longrightarrow
 {\gC}^{j+i}_{\overline{p}-\overline{q}}(X;R)
\end{equation}
by
$$(c\star \omega)(\xi)=c(\omega\frown\xi).$$
We check easily
$c\star (\omega\smile \eta)=(c\star\omega)\star\eta$.
Hence, the collection $\{{\gC}^{*}_{\overline{p}}(X;R)\}_{\overline{p}\in\mathcal{P}}$
is a right perverse module over the perverse algebra 
$\left\{ \widetilde{N}_{\overline{q}}^{*}(X;R)\right\} _{\overline{q}\in\mathcal{P}}$. 
Moreover,  
we have
\begin{equation}\label{equa:stardiff}
\gd(c\star \omega)=\gd c\star \omega+(-1)^{|c|}c\star d\omega
\end{equation}
and  the star product induces
\begin{equation}\label{equa:caphomologycohomology}
 - \star - \colon {\gH}^{j}_{\overline{p}}(X;R)\otimes \crH_{\overline{q}}^{i}(X;R)
\to {\gH}^{j+i}_{\overline{p}-\overline{q}}(X;R).
\end{equation}
The module structures (\ref{equa:capcohomology}) and (\ref{equa:caphomologycohomology})
have also variants with compact supports.
We do not describe them in detail.

\subsection{\tt Background on  Poincar\'e duality}\label{sec:backPoincare} 
This notion has been described in the introduction, for compact oriented manifolds and stratified pseudomanifolds. 
We recall
the main results of \cite{CST2} and \cite{ST1} which represent a first step for a duality
over a  ring.

\begin{proposition}{\cite[Theorem B]{CST2}, \cite[Theorem B]{ST1}}\label{prop:dualityblownuphomology}
Let  $(X,\ov{p})$ be an oriented paracompact, perverse stratified pseudomanifold of dimension $n$. 
The cap product with the fundamental class $[X]\in \gH_n^{\infty,\ov{0}}(X;R)$ induces an isomorphism
$$
\crH^k_{\ov{p},c}(X;R)\xrightarrow{\cong} \gH^{\ov{p}}_{n-k}(X;R). 
$$
Moreover, if $X$ is second countable, this cap product also induces an isomorphism,
$$
\crH^k_{\ov{p}}(X;R)
\xrightarrow{\cong}
 \gH^{\infty,\ov{p}}_{n-k}(X;R).$$
\end{proposition}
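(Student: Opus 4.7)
My plan is to prove both isomorphisms by a Mayer--Vietoris / induction argument on the depth of the stratification, in the spirit of classical sheaf-theoretic proofs of Poincaré duality (\emph{cf.} the manifold case in Bredon or Iversen, adapted here to the blown-up cochain complex and the intersection chain complex reviewed in \secref{sec:back}).

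First I would make sure that the cap product $-\frown [X]$ actually defines a morphism on (co)homology. The fundamental class is represented by a locally finite $\ov{0}$-intersection cycle $\gamma \in \gC_n^{\infty,\ov{0}}(X;R)$, and the extended cap products recalled in \secref{subsec:products} give maps
\begin{equation*}
\tN^{k}_{\ov{p},c}(X;R) \otimes \gC_n^{\infty,\ov{0}}(X;R) \to \gC_{n-k}^{\ov{p}}(X;R),
\quad
\tN^{k}_{\ov{p}}(X;R) \otimes \gC_n^{\infty,\ov{0}}(X;R) \to \gC_{n-k}^{\infty,\ov{p}}(X;R),
\end{equation*}
together with a Leibniz rule \eqref{equa:capdiff}. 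Because $\gd \gamma = 0$, capping with $\gamma$ is a chain map and yields well-defined morphisms $\crH^k_{\ov{p},c}(X;R) \to \gH_{n-k}^{\ov{p}}(X;R)$ and $\crH^k_{\ov{p}}(X;R) \to \gH_{n-k}^{\infty,\ov{p}}(X;R)$. At this point it is also necessary to check that these maps are natural enough to be compatible with Mayer--Vietoris sequences and with restriction to open sets; the perverse module structure from \eqref{equa:caphomology} does exactly that.

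The heart of the argument is an induction on $\depth X$. In the base case $\depth X = 0$ the space $X$ is a topological $n$-manifold, every regular filtered simplex is just an ordinary simplex, and the blown-up complex $\tN^*_{\ov{p},c}(X;R)$ collapses to ordinary singular cochains with compact supports, so both statements reduce to classical Poincaré duality for manifolds. For the inductive step I would use the local model $U \times \rc L$ from \defref{def:pseudomanifold}: I need to verify the duality for the distinguished neighborhoods, then glue by Mayer--Vietoris. The verification on $U \times \rc L$ uses the cone formulas for blown-up cohomology with compact supports on one side and for intersection homology of $\rc L$ on the other; combined with the product with $U$, this reduces the claim on a chart to the claim for the compact link $L$, which has strictly smaller depth. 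I would also verify a ``small open'' / Bredon-type hypothesis: if the statement holds on $V$, $W$, and $V \cap W$, then it holds on $V \cup W$, by comparing the two Mayer--Vietoris long exact sequences via the five lemma (the relevant sequences for $\crH^*_{\ov{p},c}$ and for $\gH^{\ov{p}}_*$ are already in our earlier works cited in \secref{sec:back}).

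To pass from distinguished neighborhoods to a paracompact $X$, I would use a standard colimit / partition-of-unity argument: cover $X$ by distinguished open sets ordered by inclusion, write $X$ as an increasing union $X = \bigcup_\alpha U_\alpha$ of finite unions of such charts, and apply the five lemma to the corresponding Mayer--Vietoris telescopes; this works with compact supports without any additional hypothesis, and for the second statement the second-countability assumption lets us perform the same colimit argument while keeping the Borel--Moore / locally finite intersection homology under control via the inverse limit description $\gC_*^{\infty,\ov{p}}(X;R) \cong \varprojlim_K \gC_*^{\ov{p}}(X, X\menos K;R)$. The main technical obstacle I expect is precisely this last step: the compatibility of the cap product with the $\varprojlim$ for locally finite chains and with colimits of open sets on the blown-up side. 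Controlling the $\lim^1$ terms (which is where separability/second-countability enters) and ensuring the telescope argument really produces an isomorphism rather than only a surjection is the delicate point, and it is handled in \cite{CST2, ST1} by exploiting that the distinguished neighborhoods form a cofinal system of ``good'' opens on which the duality is already known.
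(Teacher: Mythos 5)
Note that the paper does not re-prove this statement: it is stated as a recalled background result and cited directly to \cite[Theorem~B]{CST2} and \cite[Theorem~B]{ST1}. Your outline matches the strategy of those references and the Bredon-type apparatus that the present paper develops for its own new dualities (\lemref{lem:bredon}, \lemref{lem:supbredon}, and the cone computations of \examref{exam:cone}): verify the map is a quasi-isomorphism on conical charts $\R^a\times\rc L$ using the cone formulas for both sides, then propagate via Mayer--Vietoris, disjoint unions, and an increasing union of open sets. You also correctly identify the one genuinely delicate point, namely that the compact-supports statement only needs paracompactness because colimits are exact, whereas the Borel--Moore statement requires second countability to control the inverse system of $\gC_*^{\ov{p}}(X,X\setminus K;R)$ and the attendant $\varprojlim^1$ issues; this is exactly where the hypothesis is used in \cite{ST1}. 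The one thing I would flag as slightly imprecise is your base case: rather than ``depth $0$ reduces to a manifold,'' the induction in the cited proofs is really carried by verifying the statement on all conical charts, with the $\depth=0$ case being only the starting point $P(U_\alpha)$ for open cubes of $\R^n$, after which the inductive step for $\R^a\times\rc L$ invokes the link $L$ of strictly smaller depth. Your proposal says this, so this is a matter of emphasis rather than a gap.
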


\subsection{\tt Dual of a  complex}
Let $0\rightarrow R\rightarrow QR
\xrightarrow{\rho}
QR/R\rightarrow 0$ be an injective resolution of the principal ideal domain $R$.

We denote by $I_R^*$ the cochain complex 
$\xymatrix@1{
I_R^0=QR\ar@{->>}[r]^-{\rho}
&
I^1_R=QR/R
}$
 and define the \emph{dual complex of a cochain complex}, $A^*$, as the chain complex
\begin{equation}\label{equa:dualcochain}
(DA^*)_{k}=(\Hom(A^*,I_{R}^*))_{k}=\Hom_{R}(A^k,QR)\oplus \Hom_{R}(A^{k+1},QR/R)
\end{equation}
with the differential
$\partial(\varphi_{0},\varphi_{1})= (-(-1)^k\varphi_{0}\circ d, -(-1)^k\varphi_{1}\circ d-\rho\circ \varphi_{0})$.
This dual complex verifies a universal coefficient formula, see \cite[Lemma 1.2]{MR1366538} for instance,
\begin{equation}\label{equa:dualuniversel}
\xymatrix@1{
0\ar[r]&
\Ext_{R}(H^{k+1}(A^*),R)\ar[r]&
H_{k}(DA^*)\ar[r]^-{\kappa}&
\Hom_{R}(H^k(A^*),R)\ar[r]&
0,
}
\end{equation}
where $\kappa$ is the canonical map defined by $(\kappa [\varphi_0,\varphi_1] )([a])=\varphi_0(a)$.
The complex $DA^*$ plays the same role as the Verdier dual in sheaf theory. A 
\emph{self-dual cochain complex of dimension $n$} is a complex, 
$A^{*}$, together with a quasi-isomorphism
\begin{equation}\label{equa:selfdual}
A^{\star}\to (DA^*)_{n-\star}.
\end{equation}

Similarly, we define the dual of a chain complex, $A_{*}$, as the cochain complex,
\begin{equation}\label{equa:dualchain}
(DA_{*})^{k}=(\Hom(A_{*},I_{R}^*))^{k}=\Hom_{R}(A_{k},QR)\oplus \Hom_{R}(A_{k-1},QR/R),
\end{equation}
with the differential
$d(\psi_{0},\psi_{1})= (-(-1)^k\psi_{0}\circ \partial, -(-1)^k\psi_{1}\circ \partial-\rho\circ \psi_{0})$.
This dual complex also verifies a universal coefficient formula, 
\begin{equation}\label{equa:dualuniverselhomology}
\xymatrix@1{
0\ar[r]&
\Ext_{R}(H_{k-1}(A_{*}),R)\ar[r]&
H^{k}(DA_{*})\ar[r]&
\Hom_{R}(H_{k}(A_{*}),R)\ar[r]&
0.
}
\end{equation}

\subsection{\tt Torsion and torsion free pairings}
We recall how the existence of a duality gives pairings between the torsion and torsion free parts,
see   \cite[Section 8.4]{FriedmanBook} for a similar treatment.

\begin{proposition}\label{prop:pairingdual}
Let $A^*$ and $B^*$ be two cochain complexes with finitely generated cohomology.
To any cochain map, $\cP=(\cP_{0},\cP_{1})\colon B^*\to DA^*$, sending $B^k$ to $(DA^*)_{n-k}$,
we can associate two pairings,
$$\cP_{F}\colon FH^k(B)\to \Hom(H^{n-k}(A),R)
\text{ and }
\cP_{T}\colon \Tors H^k(B)\to \Hom (\Tors H^{n-k+1}(A),QR/R).$$
The first one is defined by
$$\cP_{F}([b]([a])=\cP_{0}(b)(a)\in R.$$
 For the second one, let $[b]\in \Tors H^k(B)$. There exists $b'\in B^{k-1}$ and $\ell\in R$ such that $db'=\ell b$ and we set
$$\cP_{T}([b])=\rho\left(\frac{\cP_{0}(b')}{\ell}\right)+\cP_{1}(b) 
.$$
Moreover, the pairings $\cP_{F}$ and $\cP_{T}$ are non-singular if, and only if, $\cP$ is a quasi-isomorphism.
\end{proposition}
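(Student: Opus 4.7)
The plan is to factor both pairings through the single map $\cP_\ast\colon H^k(B^*)\to H_{n-k}(DA^*)$ induced in (co)homology by the cochain map $\cP$, and to use the universal coefficient formula (\ref{equa:dualuniversel}) to split this target into a torsion-free piece recorded by $\cP_{F}$ and a torsion piece recorded by $\cP_{T}$. Since the cohomology of $A^*$ is finitely generated over the PID $R$, the injective resolution $0\to R\to QR\to QR/R\to 0$ furnishes canonical identifications $\Hom_{R}(H^{n-k}(A^*),R)\cong \Hom_{R}(FH^{n-k}(A^*),R)$ and $\Ext_{R}(H^{n-k+1}(A^*),R)\cong \Hom_{R}(\Tors H^{n-k+1}(A^*),QR/R)$. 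The first module is $R$-free and the second is $R$-torsion, so (\ref{equa:dualuniversel}) identifies the torsion-free quotient and torsion subgroup of $H_{n-k}(DA^*)$ respectively with these two modules. Any $R$-linear map respects these decompositions, so $\cP_\ast$ restricts to and induces maps on torsion and torsion-free parts.

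For the torsion-free pairing, the cocycle relation $\partial(\cP_{0}(b),\cP_{1}(b))=0$ gives $\cP_{0}(b)\circ d=0$ and $\rho\circ\cP_{0}(b)=\pm\cP_{1}(b)\circ d$, the latter vanishing on cocycles of $A^*$. Hence $\cP_{0}(b)(a)\in R\subset QR$ whenever $a$ is a cocycle, and the formula $[a]\mapsto \cP_{0}(b)(a)$ is exactly the image of $\cP_\ast[b]$ under the surjection $\kappa$ of (\ref{equa:dualuniversel}). This realises $\cP_{F}$ as the map $FH^k(B^*)\to FH_{n-k}(DA^*)\cong \Hom_{R}(FH^{n-k}(A^*),R)$ induced by $\cP_\ast$ on torsion-free parts.

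For the torsion pairing, which is the main technical step, the inclusion $\Ext_{R}(H^{n-k+1}(A^*),R)\hookrightarrow H_{n-k}(DA^*)$ in (\ref{equa:dualuniversel}) and the identification with $\Hom_{R}(\Tors H^{n-k+1}(A^*),QR/R)$ coming from the injective resolution must be unwound explicitly. Given $[b]\in \Tors H^k(B^*)$ with $db'=\ell b$, the relation $\ell\cdot \cP(b)=\pm\partial \cP(b')$ makes $\cP_\ast[b]$ an $\ell$-torsion class of $H_{n-k}(DA^*)$, and tracing the standard construction of the isomorphism $\Ext_R(-,R)\cong \Hom_R(\Tors -,QR/R)$ attaches to it the functional $a\mapsto \rho(\cP_{0}(b')(a)/\ell)+\cP_{1}(b)(a)$ on $\Tors H^{n-k+1}(A^*)$, matching the formula of the statement. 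Verification of well-definedness (independence of the choices of $b'$, $\ell$, and of the representative of $[b]$; independence of the representative of $[a]$; values in $QR/R$; factoring through the torsion quotient) reduces in each case to writing out the relevant cocycle/coboundary relations derived from $\cP$ being a chain map, using in particular that a torsion cocycle $a$ with $\ell' a$ a coboundary forces certain $R$-valued quantities to vanish because $R$ is torsion-free.

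Finally, the invertibility of $\cP_\ast$ in every degree is equivalent, by the Five Lemma applied to the short exact sequences decomposing $H^k(B^*)$ and $H_{n-k}(DA^*)$ into torsion subgroup and torsion-free quotient, to the simultaneous invertibility of its restrictions to those parts, hence to $\cP_{F}$ and $\cP_{T}$ being isomorphisms in every degree. Under the assumed finite generation, this is equivalent to non-singularity of both pairings. The main obstacle lies in the third paragraph, namely the careful algebraic identification of the connecting construction behind $\Ext_R(-,R)\cong \Hom_R(\Tors -,QR/R)$ with the ad hoc formula defining $\cP_{T}$.
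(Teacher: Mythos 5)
Your overall architecture is the same as the paper's: factor through the induced map $\cP_\ast\colon H^k(B^*)\to H_{n-k}(DA^*)$, use the universal coefficient sequence (\ref{equa:dualuniversel}) to identify $\Hom(\Tors H^{n-k+1}(A),QR/R)$ as the torsion submodule and $\Hom(H^{n-k}(A),R)$ as the torsion-free quotient of $H_{n-k}(DA^*)$, obtain $\cP_F$ and $\cP_T$ as the induced maps on the two torsion short exact sequences, and match the explicit formulas. That is precisely the diagram (\ref{equa:pairingdual}) of the paper, so you are not taking a different route.

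Two points need attention, though. The matching of the lifted map with the explicit formula for $\cP_T$ is exactly where the paper does its real work: it writes out the compatibility of $\cP$ with the two differentials (giving the relations $\ell\cP_0(b)=-(-1)^{n-k+1}\cP_0(b')\circ d$ and $\ell\cP_1(b)=-(-1)^{n-k+1}\cP_1(b')\circ d-\rho\circ\cP_0(b')$) and then computes $\partial\bigl(\cP_0(b')/\ell,\,0\bigr)=\cP(b)-(0,\cP_T([b]))$ to see that the formula indeed represents the lift. You flag this step as ``the main obstacle'' and defer it; the proposal is thus incomplete precisely where the content of the proposition lies. Note also that once $\cP_T$ is obtained as the unique lift of $\cP\circ j_1$ through $j_2$ (uniqueness because $j_2$ is injective), the several well-definedness checks you list are automatic; your plan to check them one by one is more work than necessary.

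The more clear-cut gap is in the last paragraph. The Five Lemma only gives one direction: if $\cP_T$ and $\cP_F$ are isomorphisms then so is $\cP_\ast$. It does \emph{not} assert that an isomorphism in the middle of two short exact sequences forces isomorphisms on the ends, and in general this implication is false. The converse here needs an additional argument. The paper applies the Ker--Coker (snake) sequence to (\ref{equa:pairingdual}), which under the hypothesis that $\cP_\ast$ is an isomorphism yields $\Ker\cP_T=\Coker\cP_F=0$ together with an isomorphism $\Ker\cP_F\cong\Coker\cP_T$; since $\Ker\cP_F\subset\Hom(H^{n-k}(A),R)$ is free over the PID $R$ while $\Coker\cP_T$ is a quotient of the torsion module $\Hom(\Tors H^{n-k+1}(A),QR/R)$, both vanish. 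You should either supply this free-versus-torsion argument, or observe directly that an $R$-module isomorphism carries the torsion submodule isomorphically onto the torsion submodule, hence is an isomorphism on each of the two pieces of the torsion short exact sequence. Either way, attributing the equivalence to the Five Lemma is not correct.
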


\begin{proof}
We first construct the following diagram,
\begin{equation}\label{equa:pairingdual}
\xymatrix{
0\ar[r]&
\Tors H^k(B)\ar[r]^-{j_{1}}\ar[d]^{\cP_{T}}&
H^k(B)\ar[r]\ar[d]^{\cP}&
\Free H^k(B)\ar[d]^{\cP_{F}}\ar[r]&
0\\
0\ar[r]&
\Hom(\Tors H^{n-k+1}(A),QR/R)\ar[r]^-{j_{2}}&
H_{n-k}(DA^*)\ar[r]^-{\kappa}&
\Hom(H^k(A),R)\ar[r]&
0.
}
\end{equation}
The upper line is the decomposition of a module in torsion and torsion free parts.
The lower one is a universal coefficient formula. Recall that the short exact sequence,
$$ 
\xymatrix{
0\ar[r]&
\Hom(A^{*+1},QR/R)\ar@{^(->}[r]&
DA^*\ar[r]&
\Hom(A^*,QR)\ar[r]&
0,
}$$
gives a long exact sequence with connecting map denoted $\delta$,
$$
\xymatrix{
\dots\ar[r]&
\Hom( H^{k+1}(A),QR)\ar[r]^-{\delta}&
\Hom( H^{k+1}(A),QR/R)\ar[r]&
H_{n-k}(DA^*)\ar[r]^-{\kappa}&
\dots
}$$
As $QR$ is injective and $H(A^*)$ finitely generated, there are  isomorphisms
$\Ker \delta\cong \Hom(H^{*+1}(A;R),R)$
and
$\Coker\delta\cong \Ext( H^{*+1}(A),R)\cong \Hom(\Tors H^{*+1}(A),QR/R)$.
Hence, the map $j_2$ is induced by the canonical inclusion
$\xymatrix@1{
\Hom(A^{*+1},QR/R)\;\ar@{^(->}[r]&
DA^*.
}$

As $\Hom(H^k(A),R)$ is torsion free, the composite
$\kappa\circ \cP\circ j_{1}$ is zero and there exists a lifting $\cP_{T}$ such that
$j_{2}\circ \cP_{T}=\cP\circ j_{1}$. This map induces $\cP_{F}$ making commutative the diagram.
The map $\cP_{F}$ is easily determined as in the statement. 

We now determine the map $\cP_{T}$.
With the notations of the statement, we analyze the compatibility of $\cP$ with the differentials. We first have:
\begin{itemize}
\item $(\cP\circ d)(b')=(\cP_{0}(db'),\cP_{1}(db'))=(\ell \cP_{0}(b),\ell \cP_{1}(b))$,
\item $\atop\begin{array}{ccl}
\partial\cP(b')&=&\partial(\cP_{0}(b'),\cP_{1}(b'))\\
&=&
(-(-1)^{n-k+1}\cP_{0}(b')\circ d,-(-1)^{n-k+1}\cP_{1}(b')\circ d -\rho\circ \cP_{0}(b')).
\end{array}$
\end{itemize}
The equality $\partial\circ \cP=\cP\circ d$ implies
\begin{equation}\label{equa:cPetdiff}
\left\{
\begin{array}{ccl}
\ell \cP_{0}(b)&=&-(-1)^{n-k+1}\cP_{0}(b')\circ d,\\
\ell \cP_{1}(b)&=&-(-1)^{n-k+1}\cP_{1}(b')\circ d -\rho\circ \cP_{0}(b').
\end{array}\right.
\end{equation}
We now show the commutation $j_{2}\circ \cP_{T}=\cP\circ j_{1}$ by proving that the difference is zero in homology:
\begin{eqnarray*}
\partial\left(\frac{\cP_{0}(b')}{\ell},0\right)
&=&
\left( -(-1)^{n-k+1}\frac{\cP_{0}(b')\circ d}{\ell},
-\rho\left(\frac{\cP_{0}(b')}{\ell}\right)\right)\\
&=&
(\cP_{0}(b),-\cP_{T}([b])+\cP_{1}(b))=\cP(b)-(0,\cP_{T}([b])).
\end{eqnarray*}
The last equality comes from the definition of $\cP_{T}$ and (\ref{equa:cPetdiff}).

If $\cP_{F}$ and $\cP_{T}$ are isomorphisms, $\cP$ is one also, from the five lemma. 
Conversely, suppose that $\cP$ is an isomorphism.
The $\Ker$-$\Coker$ exact sequence associated to (\ref{equa:pairingdual}) implies 
$\Ker \cP_{T}=\Coker \cP_{F}=0$ and $\Ker \cP_{F}\cong \Coker \cP_{T}$. But
$\Ker \cP_{F}$ is free and $\Coker \cP_{T}$ is torsion, thus
$\Ker \cP_{F}= \Coker \cP_{T}=0$.
\end{proof}
\section{Verdier dual of intersection cochain complexes}\label{sec:capdual}

\begin{quote}
In \thmref{thm:duality}, we prove  \thmref{thm:poinclef} for any principal ideal domain as ring of coefficients. 
The main feature
is the use of a cap product which gives   the duality map between the two intersection cohomologies. 
We continue with a necessary and sufficient condition 
for the existence
of a duality at the level of the blown-up intersection cohomology (or intersection homology) itself.
\end{quote}

 \begin{proposition}\label{prop:dualitycochains}
 Let $(X,\ov{p})$ be an oriented perverse stratified pseudomanifold of dimension $n$
and $\gamma_X$ a representing cycle of  the fundamental class $[X]\in \gH_n^{\infty,\ov{0}}(X;R)$. 
The two following maps, 
$$
\crN_{\ov{p}}
\colon
\tN^{\star}_{\ov{p}}(X;R)\to (D\gC^*_{\ov{p},c}(X;R))_{n-\star},
\text{ and }
\crC_{\ov{p}}
\colon
\gC^{\star}_{\ov{p}}(X;R)\to (D\tN^{*}_{\ov{p},c}(X;R))_{n-\star},
$$
defined by 
\begin{itemize}
\item $\crN_{\ov{p}}(\omega)=(\varphi(\omega),0)$ with $\varphi(\omega)(c)=(-1)^{|\omega|\,|c|}(c\star \omega)(\gamma_{X})$,
\item $\crC_{\ov{p}}(c)=(\psi(c),0)$ with $\psi(c)(\omega)=(c\star \omega)(\gamma_{X})$,
\end{itemize}
are cochain maps.
 \end{proposition}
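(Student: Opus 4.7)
The plan is to verify, for both $\crN_{\ov{p}}$ and $\crC_{\ov{p}}$, that the image lands in the stated dual complex and that the map commutes with differentials. The common tool is the Leibniz rule \eqref{equa:stardiff} for the star product, evaluated on the cycle $\gamma_X$: since $\gd \gamma_X=0$, applying $\gd(c\star\omega)=\gd c\star\omega+(-1)^{|c|}c\star d\omega$ to $\gamma_X$ yields the identity $(\gd c\star\omega)(\gamma_X)=-(-1)^{|c|}(c\star d\omega)(\gamma_X)$, which is precisely the compatibility that exchanges $\gd$ on one factor with $d$ on the other, up to a Koszul sign.

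For $\crN_{\ov{p}}$, I would first note that for $\omega\in \tN^k_{\ov{p}}$ and $c\in\gC^{n-k}_{\ov{p},c}$ the star product $c\star\omega$ lies in $\gC^n_{\ov{0},c}$ by \eqref{equa:capcohomology}, so its evaluation on the locally finite $\ov{0}$-cycle $\gamma_X$ is well-defined and yields a value in $R\subset QR$. Hence $\varphi(\omega)\in\Hom_R(\gC^{n-k}_{\ov{p},c},QR)$ and the pair $(\varphi(\omega),0)$ is a legitimate element of $(D\gC^*_{\ov{p},c})_{n-k}$. The cochain map property is then checked by unfolding $\partial\crN_{\ov{p}}(\omega)=(-(-1)^{n-k}\varphi(\omega)\circ\gd,\,-\rho\circ\varphi(\omega))$ and comparing with $\crN_{\ov{p}}(d\omega)=(\varphi(d\omega),0)$. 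The second components agree because $\varphi(\omega)$ takes values in $R=\ker\rho$, so $\rho\circ\varphi(\omega)=0$. The first components agree by the identity recalled above, with the Koszul sign $(-1)^{|\omega|\,|c|}$ built into $\varphi$ chosen precisely to absorb the sign prefactor $-(-1)^{n-k}$ coming from \eqref{equa:dualcochain}.

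The argument for $\crC_{\ov{p}}$ is entirely parallel, with the roles of $c$ and $\omega$ interchanged; the absence of a Koszul prefactor in $\psi(c)(\omega)$ is the corresponding symmetric choice, since the relevant sign is already carried by the cochain differential on $\gC^*_{\ov{p}}$ via $\gd c(\xi)=-(-1)^{|c|}c(\gd\xi)$. The main difficulty is sign bookkeeping: one must simultaneously track signs from the cochain differential, the dual complex differential \eqref{equa:dualcochain}, and the Koszul convention, and check that the sign chosen in each map's definition is the unique one making both components of the dual differential match on the nose. Once the signs are aligned, the verification is a one-line calculation in each case.
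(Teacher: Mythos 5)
Your proposal is correct and follows essentially the same route as the paper: both rest on the Leibniz rule \eqref{equa:stardiff} evaluated against the cycle $\gamma_X$ (with $\gd\gamma_X=0$), together with the observation that the $\rho$-component vanishes because $\varphi(\omega)$ (resp.\ $\psi(c)$) lands in $R=\Ker\rho$. The paper performs the identical sign chase, deriving $\varphi(d\omega)=-(-1)^{|\omega|}\varphi(\omega)\circ\gd$ from \eqref{equa:stardiff} before matching against the dual differential of \eqref{equa:dualcochain}, and disposes of $\crC_{\ov{p}}$ by the same symmetry you invoke.
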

 
 \begin{proof} 
 Let $\rho\colon QR\to QR/R$ be the quotient map.
 We first observe that $\rho \varphi(\omega)(c)=0\in QR/R$
 since $\varphi(\omega)(c)\in R$. Also, as $\gamma_{X}$ is a cocycle,
 we have $\gd(c\star \omega)(\gamma_{X})=0$. With (\ref{equa:stardiff}), we deduce
 $$(\gd c\star \omega)(\gamma_{X})+ (-1)^{|c|}(c\star d\omega)(\gamma_{X})=0.$$
 Thus, we have
 $(-1)^{|\omega|(|c|\,+1)}\varphi(\omega)(\gd c)+(-1)^{|\omega|\,|c|}\varphi(d\omega)(c)
 =
 0$
 which implies
 $\varphi(d\omega)=-(-1)^{|\omega|}\varphi(\omega)\circ \gd$.
 From these observations, we get
 \begin{eqnarray*}
 \partial\crN_{\ov{p}}(\omega)
 &=&
 (\partial \varphi(\omega),0)=
 (-(-1)^{|\omega|}\varphi(\omega)\circ \gd, -\rho \varphi(\omega))=(\varphi(d\omega),0)
 \\
 &=&
 \crN_{\ov{p}}(d\omega).
 \end{eqnarray*}
 The proof is similar for $\crC_{\ov{p}}$.
 \end{proof}

\begin{theoremb}\label{thm:duality}
Let $(X,\ov{p})$ be a 
paracompact, separable and oriented perverse stratified pseudomanifold of dimension $n$.
 Then, the two  maps, 
$
\crN_{\ov{p}}
\colon
\tN^{\star}_{\ov{p}}(X;R)\to (D\gC^*_{\ov{p},c}(X;R))_{n-\star}$
and
$\crC_{\ov{p}}
\colon
\gC^{\star}_{\ov{p}}(X;R)\to (D\tN^{*}_{\ov{p},c}(X;R))_{n-\star}
$,
of \propref{prop:dualitycochains},
are quasi-isomorphisms.
 \end{theoremb}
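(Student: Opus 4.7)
The plan is to establish both quasi-isomorphisms using a Mayer--Vietoris induction on the depth of $X$, with the inductive step handled by the cone formulas for the relevant intersection cochain complexes and their compact-support variants, and with the base case controlled by classical Poincar\'e duality together with \propref{prop:dualityblownuphomology}.

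First I would check the naturality of $\crN_{\ov{p}}$ and $\crC_{\ov{p}}$ under open inclusions: for any open $U\subset X$, the restriction of a representing cycle $\gamma_X$ gives a fundamental class on $U$, and the formulas of \propref{prop:dualitycochains} restrict accordingly. Combined with the Mayer--Vietoris short exact sequences available for $\tN^*_{\ov{p}}$, $\gC^*_{\ov{p}}$ and their compact-support versions (which pass to long exact sequences on the Verdier duals because $I_R^*$ is an injective cochain complex and $D(-)$ preserves short exact sequences), this yields a five-lemma argument: if the two maps are quasi-isomorphisms for $U$, $V$ and $U\cap V$, they are so for $U\cup V$.

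Next I would proceed by induction on $\depth X$. In the base case $\depth X=0$, $X$ is a manifold and both $\tN^*_{\ov{p}}$ and $\gC^*_{\ov{p}}$ are quasi-isomorphic to ordinary cochain complexes, so the statement reduces to the chain-level formulation of classical Poincar\'e duality $H^k_c(X;R)\simeq H_{n-k}(X;R)$ via the Verdier dual (\ref{equa:dualuniversel}), which is standard. For the inductive step, distinguished neighbourhoods of the form $\R^k\times\rc L$ with $\depth L<\depth X$ cover the singular part of $X$; the K\"unneth isomorphism for the factor $\R^k$, together with the cone formulas for $\crH^*_{\ov{p}}(\rc L)$, $\gH^{\ov{p}}_*(\rc L)$ and their compact-support analogues, reduces the cone case to a matching of truncation ranges dictated by $\ov{p}$ and $D\ov{p}$, which is controlled by the inductive hypothesis on $L$. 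The passage to arbitrary $X$ then uses the paracompactness and separability of $X$ to select a countable cover by such neighbourhoods, iterated Mayer--Vietoris, and a Milnor-type $\varprojlim^1$ argument to handle the countable colimits.

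The main obstacle is the cone step. The complexes $\tN^*_{\ov{p}}(\rc L)$ and $\gC^*_{\ov{p}}(\rc L)$ obey different cone formulas, with thresholds governed by the perverse degree along the apex stratum, and the compact-support variants $\tN^*_{\ov{p},c}(\rc L)$, $\gC^*_{\ov{p},c}(\rc L)$ satisfy complementary formulas; verifying that the cap products $\crN_{\ov{p}}$ and $\crC_{\ov{p}}$ interchange these four thresholds in a compatible way, so that the non-zero piece on one side matches, through the short exact sequence (\ref{equa:dualuniversel}), the $\Hom$ and $\Ext$ contributions extracted on the other side, is where the technical heart of the argument concentrates.
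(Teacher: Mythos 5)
Your overall strategy---a Mayer--Vietoris induction via a Bredon-type scheme, with the base case on manifolds, an induction on depth, and a cone step as the technical heart---is exactly the scheme the paper uses, packaged there as \lemref{lem:bredon} and \lemref{lem:supbredon}. But your handling of the cone step $X=\R^a\times\rc L$ differs in a way that leaves the crucial work undone, and the way you propose to finish it is the harder route.

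The paper's \lemref{lem:duality} does \emph{not} compute cone formulas and match truncation ranges against the universal coefficient sequence. Instead it factors the two cochain maps through quasi-isomorphisms that are \emph{already known}: for $\crC_{\ov{p}}$, one takes the Verdier dual of the composite
\[
\tN^{\star}_{\ov{p},c}(X)\xrightarrow{\;\cap\gamma_X\;}\gC^{\ov{p}}_{n-\star}(X)\xrightarrow{\;\eval\;}(D\gC^*_{\ov{p}}(X))_{n-\star},
\]
where the left arrow is the compact-support Poincar\'e duality of \cite[Theorem~B]{CST2} and the right arrow is the standard evaluation quasi-isomorphism for a free complex with finitely generated homology. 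For $\crN_{\ov{p}}$ one factors as $\tN^{\star}_{\ov{p}}(X)\to\gC^{\infty,\ov{p}}_{\star}(X)\to(D\gC^*_{\ov{p},c}(X))_{n-\star}$, the left arrow being the Borel--Moore duality of \cite{ST1}, and the right arrow reduces via the identification of both $\gC^*_{\ov{p},c}(X)$ and $\gC^{\infty,\ov{p}}_*(X)$ with the relative pair $(X,X\setminus\{(0,\tv)\})$ to evaluation plus the five lemma. This sidesteps any explicit comparison of cone truncations. Your proposal, by contrast, acknowledges that the cone step requires verifying that the thresholds from the four cone formulas interact compatibly with $\Hom$/$\Ext$ in \eqref{equa:dualuniversel}, and then stops; that is precisely the non-trivial part, and it is not clear that ``the inductive hypothesis on $L$'' controls it, since the inductive hypothesis concerns the duality maps on $L$ itself, not the cap product on $\rc L$. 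If you want to pursue the truncation route, you must (a) establish finite generation of $\crH^*_{\ov{p}}(\rc L)$, $\crH^*_{\ov{p},c}(\rc L)$, $\gH^*_{\ov{p}}(\rc L)$, $\gH^*_{\ov{p},c}(\rc L)$---which the paper proves explicitly and which your $\Hom$/$\Ext$ bookkeeping silently needs---and (b) verify the chain-level compatibility of the cap product with the cone truncations, neither of which is a formal consequence of the cone formulas.

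Two smaller points: the appeal to a Milnor $\varprojlim^1$ argument for ``countable colimits'' is misplaced; the Bredon scheme of \lemref{lem:bredon} handles globalization via the closed-under-finite-intersection basis plus the disjoint-union axiom, and no $\varprojlim^1$ enters. And \propref{prop:dualityblownuphomology}, which you invoke only in the base case, is in fact the central input to the paper's cone lemma, not a manifold-only tool.
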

 
 We need some lemmas before giving the proof. The first one is proven in  \cite{CST4}.
 
  \begin{lemma}{\cite[Lemma 13.3]{CST4}}\label{lem:bredon}
Let $ X $ be a locally compact topological space, metrizable and separable. We are given an open basis
of $ X $, $ \mathcal U = \{U_{\alpha} \} $, closed by finite intersections, and a statement $P(U) $ on  open subsets of $X$ satisfying the following three properties.
\begin{enumerate}[a)]
\item  The property $P(U_{\alpha}) $ is true for all $ \alpha $.
\item  If $ U $, $ V $ are open subsets of $ X $ for which  properties $P(U) $, $ P (V) $ and $ P (U \cap V) $ are true, then
$ P (U \cup V) $ is true.
\item If $ (U_{i})_{i \in I} $ is a family of open subsets of $ X $, pairwise disjoint, verifying the property $ P (U_{i}) $ for all $ i \in I $, then $ P (\bigsqcup_i U_{i}) $ is true.
\end{enumerate}
Then the property $ P (X) $ is true.
\end{lemma}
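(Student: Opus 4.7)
The plan is to deduce $P(X)$ in three stages, combining induction on finite unions with the metric and local compactness structure of $X$.

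\emph{Stage 1 (finite unions of basis elements).} I show by induction on $k$ that $P(U_{\alpha_{1}} \cup \cdots \cup U_{\alpha_{k}})$ holds for any $U_{\alpha_{i}} \in \mathcal{U}$. The base case $k = 1$ is (a). For the inductive step, write $U_{\alpha_{1}} \cup \cdots \cup U_{\alpha_{k}} = W \cup U_{\alpha_{k}}$ with $W = U_{\alpha_{1}} \cup \cdots \cup U_{\alpha_{k-1}}$. Since $\mathcal{U}$ is closed under finite intersections, $W \cap U_{\alpha_{k}} = \bigcup_{i<k}(U_{\alpha_{i}} \cap U_{\alpha_{k}})$ is a union of $k-1$ basis elements; $P(W)$ and $P(W \cap U_{\alpha_{k}})$ then hold by induction, $P(U_{\alpha_{k}})$ by (a), and (b) delivers $P(W \cup U_{\alpha_{k}})$. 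The same inductive scheme upgrades to a Mayer--Vietoris principle: if $A_{1}, \ldots, A_{n}$ are open with $P$ holding on every nonempty finite intersection $A_{i_{1}} \cap \cdots \cap A_{i_{m}}$, then $P(A_{1} \cup \cdots \cup A_{n})$ also holds.

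\emph{Stage 2 (a $\sigma$-disjoint layered decomposition).} Since $X$ is metrizable, Stone's theorem yields a $\sigma$-discrete open refinement of $\mathcal{U}$. Refining further inside each member by elements of $\mathcal{U}$ (and reapplying Stone's theorem within each small piece, viewed as its own metric subspace, if necessary), I produce pairwise disjoint subfamilies $\mathcal{V}_{n} \subseteq \mathcal{U}$, $n \in \mathbb{N}$, with $\bigcup_{n}\bigcup \mathcal{V}_{n} = X$. Setting $W_{n} = \bigsqcup_{V \in \mathcal{V}_{n}} V$, (a) and (c) give $P(W_{n})$. For any $n_{1} < \cdots < n_{m}$, the decomposition
\[
W_{n_{1}} \cap \cdots \cap W_{n_{m}} \;=\; \bigsqcup_{(V_{1}, \ldots, V_{m}) \in \mathcal{V}_{n_{1}} \times \cdots \times \mathcal{V}_{n_{m}}} V_{1} \cap \cdots \cap V_{m}
\]
expresses this intersection as a pairwise disjoint (using disjointness within each $\mathcal{V}_{n_{i}}$) union of basis elements, so $P$ holds on it by (a) and (c). The Mayer--Vietoris principle of Stage 1 then delivers $P(W_{1} \cup \cdots \cup W_{k})$ for every finite~$k$.

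\emph{Stage 3 (passage to the countable union).} The subtle step is to move from $P$ on every finite $W_{1} \cup \cdots \cup W_{k}$ to $P$ on the full $X = \bigcup_{n} W_{n}$, since property (c) only accepts pairwise disjoint unions. Using local compactness, I plan to exhaust $X$ by relatively compact opens $O_{1} \subseteq \overline{O_{1}} \subseteq O_{2} \subseteq \overline{O_{2}} \subseteq \cdots$ and to exploit the local finiteness of the $\mathcal{V}_{n}$'s on each compact $\overline{O_{n}}$ so that each annular strip $O_{n+1} \setminus \overline{O_{n-1}}$ meets only finitely many $W_{j}$ and is therefore covered by a finite union to which Stages~1--2 apply. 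A parity coloring then makes the even-indexed strips pairwise disjoint and likewise for the odd-indexed ones, so that (c) combines each parity class into an open set with $P$, and a final application of (b), together with one more use of (c) on the intersection of the two parity pieces, yields $P(X)$. The main obstacle is to calibrate the $\sigma$-discrete refinement of Stage~2 against the compact exhaustion so that the annular pieces simultaneously satisfy the strict disjointness required by (c) and the $P$-on-intersections required by (b); I expect this bookkeeping to be the technically heaviest part of the proof.
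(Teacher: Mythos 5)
Your overall architecture (finite unions of basis elements via (a), (b) and closure under intersections; a compact exhaustion; a parity trick feeding into (c) and one final use of (b)) is the right one — it is the classical Bredon-type bootstrap — but two of your concrete steps do not hold up, and the step you defer as ``bookkeeping'' is precisely the mathematical content of the lemma. First, Stage 2 is not established: Stone's theorem produces a $\sigma$-discrete open refinement whose members are merely \emph{contained in} elements of $\mathcal U$, not elements of $\mathcal U$; re-applying Stone's theorem inside each piece reproduces exactly the same defect, so the regress never terminates and the families $\mathcal V_n\subseteq\mathcal U$ are never constructed (that every base of a metrizable space contains a $\sigma$-discrete base is a nontrivial fact you would have to prove or cite, and you do neither). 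Second, and more seriously, Stage 3 rests on a false claim: $\sigma$-discreteness only says that a compact set meets finitely many members of each \emph{single} family $\mathcal V_j$; it can perfectly well meet every $W_j$, $j\in\mathbb N$, so an annular strip need not be ``covered by a finite union'' of the $W_j$. Moreover, even where a strip is covered by finitely many sets satisfying $P$, that does not give $P$ for the strip itself, so the sets you feed to (c) cannot be the strips; they must be explicitly constructed open sets for which $P$ is already known.

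The correct completion bypasses Stage 2 entirely and is exactly the calibration you postpone. Since $X$ is locally compact, separable and metrizable, it is $\sigma$-compact and admits a proper map $f\colon X\to[0,\infty)$. For each $n$, cover the compact set $f^{-1}([n,n+1])$ by finitely many elements of $\mathcal U$ contained in the open set $f^{-1}(]n-\tfrac12,n+\tfrac32[)$, and let $A_n$ be their union. Then $P(A_n)$ holds by your Stage 1, $A_n\cap A_m=\emptyset$ whenever $|n-m|\geq 2$, so (c) gives $P(U)$ and $P(V)$ for $U=\bigsqcup_{n\,\mathrm{even}}A_n$ and $V=\bigsqcup_{n\,\mathrm{odd}}A_n$; furthermore $U\cap V=\bigsqcup_n\left(A_n\cap A_{n+1}\right)$ is a pairwise disjoint union of finite unions of basis elements (closure of $\mathcal U$ under finite intersections), so Stage 1 and (c) give $P(U\cap V)$, and (b) yields $P(X)=P(U\cup V)$. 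As submitted, your argument is a plan whose decisive construction is missing and one of whose supporting claims (finitely many $W_j$ per strip) is incorrect, so it does not yet constitute a proof.
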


\begin{lemma}\label{lem:supbredon}
Suppose given a cochain map,
$\psi_{X}\colon A^*(X)\to B^*(X)$, for any  
paracompact, separable  perverse stratified pseudomanifold $X$,
satisfying the following three properties.
\begin{enumerate}[i)]
\item The map $\psi_{X}$ is a quasi-isomorphism for any $X=\R^a\times \rc L$, with $L$ a compact 
stratified pseudomanifold
or the emptyset.
\item The two complexes, $A^*(X)$ and $B^*(X)$, verify the Mayer-Vietoris property and $\psi_{X}$ induces a morphism
of exact sequences (up to sign).
\item If $\psi_{U_{i}}$ is a quasi-isomorphism for a family of disjoint stratified pseudomanifolds, 
then $\psi_{\sqcup_{i}U_{i}}$
is a quasi-isomorphism.
\end{enumerate}
Then $\psi_{X}$ is a quasi-isomorphism for any $X$.
\end{lemma}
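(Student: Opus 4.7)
The plan is to apply Bredon's Lemma \ref{lem:bredon} with the property $P(U)$: \emph{``$\psi_U \colon A^*(U) \to B^*(U)$ is a quasi-isomorphism,''} where $U$ ranges over open subsets of $X$ equipped with the induced stratification. Conditions (b) and (c) of Bredon are immediate from our hypotheses: (b) comes from the five lemma applied to the morphism of Mayer-Vietoris long exact sequences supplied by (ii), and (c) is precisely (iii). All the work therefore goes into producing an open basis $\mathcal{U}$ of $X$, closed under finite intersections, on which $P$ holds.

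I would construct $\mathcal{U}$ by induction on $d = \depth X$. When $d = 0$, the space $X$ is a topological manifold, and I take $\mathcal{U}$ to be the collection of opens of $X$ that are homeomorphic, via coordinate charts, to convex opens of $\R^n$. Being convex, this family is closed under finite intersections; it forms a basis of $X$; and each element is homeomorphic to $\R^n = \R^n \times \rc L$ with $L = \emptyset$, so $P$ holds on it by (i). Bredon then yields $P(X)$.

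Suppose now $d \geq 1$ and $P$ is known on every paracompact separable stratified pseudomanifold of depth strictly less than $d$. By Definition \ref{def:pseudomanifold}, each point of $X$ admits a distinguished neighborhood $V \cong \R^a \times \rc L$ with $L$ compact and $\depth L < d$; hypothesis (i) gives $P(V)$. Since a finite intersection of distinguished neighborhoods is, in general, only an open subset of each, I must first extend $P$ to every open $W \subset V$. This is done by a secondary use of Bredon \emph{inside} $V$, with a sub-basis made of (a) opens contained in $V \setminus (\R^a \times \{\tv\}) \cong \R^a \times L \times (0,1)$, a stratified pseudomanifold of depth $\depth L < d$ on which $P$ holds by the outer induction, and (b) opens of the form $\R^{a'} \times \rc L$ (convex in the $\R^a$-factor) containing a piece of the cone locus, on which $P$ is given again by (i). Taking as $\mathcal{U}$ the collection of opens of $X$ lying inside some distinguished neighborhood---stable under intersection because $W_1 \cap W_2 \subset V_i$ whenever $W_i \subset V_i$---we obtain a basis of $X$ on which $P$ holds, and a final application of Bredon concludes $P(X)$.

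The main obstacle will be this intermediate step of propagating $P$ from a full distinguished neighborhood $V$ (where (i) delivers it for free) to its arbitrary open subsets, since such an open subset can still have depth $d$ and hence the outer inductive hypothesis does not apply directly; it is here that one must weave together hypothesis (i), the outer induction applied to the cone-free part $\R^a \times L \times (0,1)$ of depth $<d$, and the Mayer-Vietoris axiom (ii). Once this sub-lemma is in place, the outer induction on depth closes cleanly through Bredon.
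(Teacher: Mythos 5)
Your overall strategy is the same as the paper's: apply Bredon's \lemref{lem:bredon} with $P(U)$ the property ``$\psi_U$ is a quasi-isomorphism,'' verify conditions b) and c) directly from hypotheses (ii) and (iii), and handle condition a) by induction on depth, with a second application of Bredon inside each chart. Your depth $\geq 1$ step is sound: the inner Bredon over each distinguished neighborhood $V\cong\R^a\times\rc L$, with a sub-basis split between cone-avoiding opens of smaller depth (handled by the outer induction) and opens of the form $B\times\rc_r L$ (handled by (i)), correctly propagates $P$ to every open subset of $V$; and the collection of opens contained in some distinguished neighborhood is indeed stable under finite intersection, since $W_1\cap W_2\subset V_1$.

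There is, however, a genuine gap in the depth $0$ base case. You claim that the family of opens of $X$ that are ``homeomorphic, via coordinate charts, to convex opens of $\R^n$'' is closed under finite intersections ``being convex.'' This conflates subsets of the manifold $X$ with subsets of $\R^n$. If $U_1$ and $U_2$ are identified with convex sets via two different charts $\varphi_1$ and $\varphi_2$, there is no reason for $\varphi_1(U_1\cap U_2)=\varphi_1(U_1)\cap(\varphi_1\circ\varphi_2^{-1})(\varphi_2(U_2))$ to be convex: the transition map $\varphi_1\circ\varphi_2^{-1}$ is only a homeomorphism and need not preserve convexity, so the family is not closed under intersection as stated. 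The fix is precisely the two-stage Bredon you already use at positive depth (and which the paper uses throughout): take the outer family $\cU$ to consist of \emph{all} opens contained in some chart---closed under intersection simply because $U_1\cap U_2\subset U_1$---and, for each fixed $U_\alpha\subset\R^n$, run a second Bredon inside $U_\alpha$ with the open $n$-cubes (or convex opens) of that single copy of $\R^n$, which \emph{is} a family stable under intersection and on which $P$ holds by (i). With that repair your proof coincides with the paper's.
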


\begin{proof}
As $X$ is  metrizable (cf. \cite[Proposition 1.11]{CST3})  we may use \lemref{lem:bredon}.
We denote by $P(X)$ the property ``$\psi_{X}$ is a quasi-isomorphism''. We consider the family
$\cU = \{ U_\alpha\}$ formed of the open subsets of charts of the topological stratified pseudomanifold $X$
together with the open subsets of the conical charts of the topological manifold $X\menos X_{n-1}$.

 Observe that Property b)$_{\cU} $ is a direct consequence of the existence
 of  a morphism between the Mayer-Vietoris sequences in the domain and codomain. 
 Also, Property c)$_{\cU}$ coincides with the hypothesis iii). We are reduced to establish a)$_{\cU}$.
 
For  that, we proceed by induction on the depth  of $X$. If $\depth\, X=0$, 
the stratified pseudomanifold  $X$ is a manifold and
we have that  $U_\alpha $ is an open subset of $\R^n$. 
 We now consider the basis $\cV$ formed of the open $n$-cubes of $\R^n$ included in $U_{\alpha}$. This family is
 closed by finite intersections and verifies the hypotheses of \lemref{lem:bredon}, Property a)$_{\cV}$ being given by 
 the hypothesis i). This proves $P(U_\alpha )$.
 
 To carry out the inductive step, we first observe that $P(U)$ is already established for each open subset $U$ of 
 $X\menos X_{n-1}$ since $\depth \,U=0$. We consider an open subset $U_{\alpha}$ of  a conical chart 
 $Y=\R^a\times \rc L$ of $X$, with $L$  a compact stratified pseudomanifold.
 We choose the basis $\cW$ of open subsets of $U_{\alpha}$, formed of the open subsets $W\subset U_{\alpha}$ with 
 $W\cap (\R^a\times \{\tv\})=\emptyset$, which are stratified pseudomanifolds with $\depth\, W \leq \depth\,
 (\R^a \times (\rc L\menos \{ \tv\}) < \depth\, Y \leq \depth\, X$,
 together with the open subsets $W=B\times \rc_{r} L\subset U_{\alpha}$, 
 where $B$ is an open $a$-cube, $r>0$ and $\rc_{r}L=(L\times [0,r[)/(L\times\{0\})$.
 The family $\cW$ is closed by finite intersections and verifies the hypotheses of \lemref{lem:bredon}, 
 the property a)$_{\cW}$ being given by induction and the hypothesis i). This proves $P(U_{\alpha})$.
\end{proof}

 The third lemma is the proof of \thmref{thm:duality}  in a particular generic case.
 
 \begin{lemma}\label{lem:duality}
 The conclusion of \thmref{thm:duality} is true if
 $X=\R^a\times \rc L$,
 where $L$ is a compact oriented perverse stratified pseudomanifold of dimension $m-1$.
 \end{lemma}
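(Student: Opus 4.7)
The plan is to factor $\crN_{\ov{p}}$ (and symmetrically $\crC_{\ov{p}}$) through the cap-product quasi-isomorphism of \propref{prop:dualityblownuphomology}, reducing the lemma to a universal coefficient comparison that can be checked on cones.

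First, using the identity $(c\star\omega)(\gamma_X)=c(\omega\frown\gamma_X)$ from (\ref{equa:capcohomology}), I would rewrite $\crN_{\ov{p}}$ as the composite
\[
\tN^{*}_{\ov{p}}(X)\xrightarrow{-\frown\gamma_X}\gC^{\infty,\ov{p}}_{n-*}(X)\xrightarrow{\eval}(D\gC^{*}_{\ov{p},c}(X))_{n-*},
\]
where $\eval$ sends a locally finite intersection chain $\xi$ to the functional $c\mapsto c(\xi)$, well-defined because $c$ has compact support. The second statement of \propref{prop:dualityblownuphomology} applies to $X=\R^a\times\rc L$, which is second countable, paracompact and oriented, and shows that $-\frown\gamma_X$ is a quasi-isomorphism. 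It therefore suffices to prove that $\eval$ is a quasi-isomorphism in this case. An entirely symmetric factorization handles $\crC_{\ov{p}}$ through the first statement of \propref{prop:dualityblownuphomology}.

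Next, the universal coefficient sequence (\ref{equa:dualuniversel}) identifies $H_{n-*}(D\gC^*_{\ov{p},c}(X))$ in terms of $\Hom$ and $\Ext$ of the groups $\gH^{*}_{\ov{p},c}(X)$, and $\eval$ induces a comparison of this sequence with the universal coefficient sequence dualizing the natural pairing between $\gH^{\infty,\ov{p}}_{*}(X)$ and $\gH^{*}_{\ov{p},c}(X)$. A K\"unneth-type reduction on the $\R^a$ factor (which shifts both Hom and Ext by degree $a$) brings us down to the case $X=\rc L$. There, the cone formulas for $\gH^{\infty,\ov{p}}_{*}(\rc L)$ and $\gH^{*}_{\ov{p},c}(\rc L)$ are dual truncations at the threshold $\ov{p}(\tv)$, each expressible in terms of the corresponding invariants of the compact link $L$; on $L$ itself, Borel-Moore and ordinary intersection homology coincide, and the universal coefficient identification reduces to the classical UCT between $\gC^{\ov{p}}_{*}(L)$ and its linear dual $\gC^{*}_{\ov{p}}(L)$. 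Applying the five lemma to the two short exact sequences then shows that $\eval$ is a quasi-isomorphism.

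The main obstacle will be the bookkeeping at the critical degree $\ov{p}(\tv)$, where the Ext term in (\ref{equa:dualuniversel}) starts to contribute and the cone truncations transition between trivial and non-trivial ranges. One must verify, with consistent signs, that the maps $\eval$ induces between the $\Hom$ and $\Ext$ summands are indeed the natural ones predicted by the UCT, so that the five lemma applies; this is precisely the place where the peripheral complex studied in the next section will later measure the failure at a deeper level.
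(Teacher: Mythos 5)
Your opening move---factoring $\crN_{\ov{p}}$ as the cap product quasi-isomorphism of \propref{prop:dualityblownuphomology} followed by an evaluation map into the Verdier dual---is exactly the paper's strategy, and the analogous factorization of $\crC_{\ov{p}}$ (which the paper reaches via a double-dual detour rather than literal symmetry) can be made to work. The genuine gap is in the step where you must show that
$$\eval\colon \gC^{\infty,\ov{p}}_{n-k}(X)\longrightarrow (D\gC^*_{\ov{p},c}(X))_{n-k}$$
is a quasi-isomorphism.

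You propose to compare (\ref{equa:dualuniversel}) with ``the universal coefficient sequence dualizing the natural pairing between $\gH^{\infty,\ov{p}}_{*}(X)$ and $\gH^{*}_{\ov{p},c}(X)$'' and then apply the five lemma. But no such universal coefficient sequence for Borel--Moore intersection homology is available a priori: $\gC^{\infty,\ov{p}}_{*}(X)$ is a complex of \emph{locally finite} chains---an inverse limit, not a free direct sum---so the standard UCT that makes $\eval$ a quasi-isomorphism for a free complex with finitely generated homology does not apply to it directly. The existence of that second short exact sequence is precisely what has to be established, and the ``K\"unneth-type reduction'' plus cone truncation formulas you invoke only identify individual cohomology groups; they do not produce the chain-level comparison of exact sequences that the five lemma needs, especially at the critical degree you yourself flag as the obstacle.

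What closes the gap in the paper is the chain-level identification, for $X=\R^a\times\rc L$, of $\gC^*_{\ov{p},c}(X)$ with the relative cochain complex $\gC^*_{\ov{p}}(X,X\menos\{(0,\tv)\})$ and of $\gC^{\infty,\ov{p}}_{*}(X)$ with the relative chain complex $\gC_*^{\ov{p}}(X,X\menos\{(0,\tv)\})$ (from \cite{FM} and \cite[Proposition 2.2]{ST1}), combined with the stratified homeomorphism $X\menos\{(0,\tv)\}\cong\rc(S^{a-1}\ast L)\menos\{\tu\}$. This turns the question into a five-lemma argument on the long exact sequence of the pair, whose end terms involve the \emph{absolute} complexes $\gC_*^{\ov{p}}(X)$ and $\gC_*^{\ov{p}}(X\menos\{(0,\tv)\})$, which are free with finitely generated homology, so the ordinary UCT applies. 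You would need this identification for your argument to go through. You also omit the preliminary verification that all the relevant (co)homology groups are finitely generated; the paper spends the first part of the proof on this, and it is needed both for the Laitinen-type ``free with finitely generated homology implies $\eval$ is a quasi-isomorphism'' step and for the double-dual quasi-isomorphism in the $\crC_{\ov{p}}$ branch. In particular finite generation of $\gH^*_{\ov{p},c}(\R^a\times\rc L)$ is not immediate and is established via the same relative identification.
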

 
 \begin{proof}
 We begin by checking the finite generation of the various homologies and cohomologies. First,
 we know that the intersection homology of a compact stratified pseudomanifold is finitely generated, see
 \cite[Corollary 6.3.40]{FriedmanBook} for instance. From Poincar\'e duality, universal coefficients formula
 or direct computations, 
 this infers the finite generation of $\crH_{\ov{p}}^*(X)$, $\gH^*_{\ov{p}}(X)$ and $\gH^{\infty,\ov{p}}_{*}(X)$.
 For the blown-up cohomology with compact supports, $\crH_{\ov{p},c}^*(X)$, this is a consequence of
 \cite[Propositions 2.18 and 2.19]{CST2}. As we do not find an explicit reference for the last one, 
 $\gH^*_{\ov{p},c}(X)$, we supply a
 short direct proof. 
 
Set
 $K_{n}=[-n,n]^a\times \rc_{n}L$
 with $\rc_{n}L=L\times [0,(n-1)/n[/L\times\{0\}$. The family $(K_{n})_{n}$ being cofinal among the compact subsets of 
 $\R^a\times \rc L$, we have
 $$\gH^*_{\ov{p},c}(\R^a\times \rc L)=
 \varinjlim_{n}\,\gH^*_{\ov{p}}(\R^a\times \rc L, (\R^a\times \rc L)\menos K_{n}).$$
As all the open subsets $(\R^a\times \rc L)\menos K_{n}$ are stratified homeomorphic, it suffices to consider $n=0$
and
\begin{equation}\label{equa:conejoin}
\gH^*_{\ov{p},c}(\R^a\times \rc L)
=\gH^*_{\ov{p}}(\R^a\times \rc L, (\R^a\times \rc L)\menos \{(0,\tv)\}).
\end{equation}
As we observed before, the cohomology $\gH^*_{\ov{p}}(\R^a\times \rc L)$ is finitely generated. For the second one, 
we know that $\R^a\times \rc L\menos \{(0,\tv)\}$ is stratified homeomorphic to $\rc (S^{a-1}\ast L)\menos \{\tu\}$,
cf. \cite[5.7.4]{MR2273730} and proof of \cite[Proposition 3.7]{CST5}. 
As $\gH^*_{\ov{p}}(\rc (S^{a-1}\ast L)\menos \{\tu\})$ is also finitely generated, so is the relative homology of
(\ref{equa:conejoin}).

For the rest of this proof, we set $X=\R^a\times \rc L$.
Let us observe that the two following maps are quasi-isomorphisms,
$$\tN^{\star}_{\ov{p},c}(X)\to \gC_{n-\star}^{\ov{p}}(X)\to (D\gC^*_{\ov{p}}(X))_{n-\star}.$$
The left-hand map is the Poincar\'e duality of \cite[Theorem B]{CST2}.
For the right hand one,  this comes from the fact that $\gC_{*}^{\ov{p}}(X)$ is a free module
with finitely generated homology, 
see \cite[Proof of Proposition 1.3]{MR1366538} for instance. By applying the dual functor to this composition,
we get the map
$$\crC_{\ov{p}}
\colon
\gC^{\star}_{\ov{p}}(X)\to 
(DD\gC^*_{\ov{p}}(X))^{\star}\to
(D\tN^{*}_{\ov{p},c}(X))_{n-\star},$$
which is a quasi-isomorphism, since the homologies are finitely generated, see \cite[Proposition 1.3]{MR1366538}.

For the second quasi-isomorphism, $\crN_{\ov{p}}$, we decompose 
it as
$$\tN^{\star}_{\ov{p}}(X)\to \gC_{\star}^{\infty,\ov{p}}(X)
\to (D\gC^*_{\ov{p},c}(X))_{n-\star},$$
where the left-hand map is  the duality of \cite{ST1} (recalled in \propref{prop:dualityblownuphomology}).
Thus the proof is reduced to the study of the right-hand map.
First, recall from \cite{FM} and \cite[Proposition 2.2]{ST1}, that
$\gC^*_{\ov{p},c}(X)=\varinjlim_{K}\gC^*_{\ov{p}}(X,X\menos K)=
\gC^*_{\ov{p}}(X,X\menos \{(0,\tv)\})$
and
$\gC_{*}^{\infty,\ov{p}}(X)=\varprojlim_{K}\gC_{*}^{\ov{p}}(X,X \menos K)
=\gC^*_{\ov{p}}(X,X\menos \{(0,\tv)\})$.
Thus, it is sufficient to prove the existence of a quasi-isomorphism,
$$\gC_{\star}^{\ov{p}}(X,X \menos \{(0,\tv)\})
\to
(D\gC^*_{\ov{p}}(X,X\menos \{(0,\tv)\}))_{n-\star}.$$ 
As the complex $\gC_{k}^{\ov{p}}(X)$ is free with finitely generated homology, the evaluation map $\gC_{\star}^{\ov{p}}(X)
\to
(D\gC^*_{\ov{p}}(X))_{n-\star}$
 is a quasi-isomorphism. 
 
 Replacing the subspace $X\menos \{(0,\tv)\}$ by $\rc (S^{a-1}\ast L)\menos \{\tu\}$
as we do above, we also get a quasi-isomorphism
$\gC_{k}^{\ov{p}}(X\menos \{(0,\tv)\})
\to
(D\gC^*_{\ov{p}}(X\menos \{(0,\tv)\}))_{n-k}$.
From a five lemma argument, we get that the  map
$\gC_{\star}^{\infty,\ov{p}}(X)
\to (D\gC^*_{\ov{p},c}(X))_{n-\star}$
is a quasi-isomorphism. 
 \end{proof}
 
 \begin{proof}[Proof of \thmref{thm:duality}]
 We check the hypotheses of \lemref{lem:supbredon}. 
 \begin{enumerate}[i)]
 \item This is \lemref{lem:duality}.
\item We already know that each complex has a Mayer-Vietoris sequence. The fact that any of the
maps under consideration induces a morphism of exact sequence comes from the natutality of the
choice of the fundamental classes: for an open subset $U\subset X$,
we may choose the restriction of a fixed cycle $\gamma_{X}\in \gC^{\infty,\ov{0}}_{n}(X)$ 
representing the fundamental class 
of $X$ to define the fundamental class of $U$.
\item This is a consequence of the fact that the duality $D$ sends inductive limits to projective limits and of 
the following properties:\\
 $\tN^*_{\ov{p}}(\sqcup_{i} U_{i}) 
 = \prod_{i}\tN^*_{\ov{p}}(U_{i})$,
  $\tN^*_{\ov{p},c}(\sqcup_{i} U_{i}) 
 = \oplus_{i}\tN^*_{\ov{p},c}(U_{i})$,
  $\gC^*_{\ov{p}}(\sqcup_{i} U_{i})
  =\prod_{i}\gC^*_{\ov{p}}(U_{i})$,
  and \\
 $\gC^*_{\ov{p},c}(\sqcup_{i} U_{i})
 =\oplus_{i}\gC^*_{\ov{p},c}(U_{i})
 $.
 \item This is immediate.
\end{enumerate}
 \end{proof}

\begin{remark}
The two complexes,
$\tN^*_{\ov{\bullet}}(-)$ and $\gC^*_{\ov{\bullet}}(-)$,
have elements  of the same nature (they associate a number to chains)
but have a different behaviour.
\begin{itemize}
\item In  $\tN^*_{\ov{p}}(-)$ a blown-up cochain is defined on each filtered simplex.
\item The  cochains in $\gC^*_{\ov{p}}(-)$ are defined only on the chains of
$\ov{p}$-intersection. 
We can view them as relative cochains taking the value 0 on chains which are not of $\ov{p}$-intersection.
\end{itemize}

Viewing $\crH^*_{\ov{p}}(-)$ as an absolute cohomology and
$\gH^*_{\ov{p}}(-)$ as a relative one, the pairings coming from \thmref{thm:duality}
look like the Poincar\'e-Lefschetz non-singular pairings of
a compact oriented manifold with boundary, that is, by example for the  torsion free part,
\begin{equation*} 
\Free H^{*}(M,\partial M;R)\otimes \Free H^{n-*}(M;R)\to R.
\end{equation*}
\end{remark}

\begin{remark}
For sake of simplicity, we suppose that $X$ is an oriented compact 
stratified pseudomanifold. 
In \cite[Theorem B]{CST2}, we prove that the chain map defined by the cap product with a cycle 
$\gamma_{X}$ representing the
fundamental class, is a quasi-isomorphism,
$$-\cap \gamma_{X}\colon \tN^{\star}_{\ov{p}}(X,R)\to \gC_{n-\star}^{\ov{p}}(X,R).$$
\thmref{thm:duality} shows that the composition with a certain dualization of the chain complex,
in fact  the Verdier dual of the linear dual, is a quasi-isomorphism as well,
$$
\tN^{\star}_{\ov{p}}(X;R)\to (D\gC^*_{\ov{p}}(X;R))_{n-\star},$$
making of the blown-up cochain complex a Verdier dual of $\gC^*_{\ov{p}}(X;R)$.
In the next section, we are now looking for a  duality
involving only the blown-up cochains.
\end{remark}
\section{Poincar\'e duality with pairings}\label{sec:peripheralGS}

\begin{quote}
After defining the peripheral complex, we prove  two main properties of it 
(see \cite{GS} in the case of compact PL-pseudomanifolds): 
its link with the occurrence of a duality in intersection homology and the existence
of a duality on itself. 
In \propref{prop:localfreeacyclic}, we show that the locally torsion free condition, 
required by Goresky and Siegel  (see \defref{def:locallyfree})
is equivalent to a local acyclicity of the peripheral complex.
Finally, we give an example of a stratified pseudomanifold which is not locally torsion free and has 
an acyclic peripheral complex, thus satisfies Poincar\'e duality.
\end{quote}

A Poincar\'e duality on an  oriented stratified pseudomanifold, $X$, similar to the duality on  manifolds, 
should be the existence of a quasi-isomorphism
\begin{equation}\label{equa:pairingR}
\crD_{\ov{p}}\colon \tN^{\star}_{\ov{p}}(X;R)\to
D(\tN^{*}_{D\ov{p},c}(X;R))_{n-\star}.
\end{equation}
With \thmref{thm:duality}, such map  $\crD_{\ov{p}}$ 
can be obtained from the  composition of $\crC_{D\ov{p}}$
with a quasi-isomorphism
$\tN^*_{\ov{p}}(X;R)\xrightarrow{\simeq} \gC^*_{D\ov{p}}(X;R)$. We introduce now such crucial map,
already present in 
\cite{CST1} and \cite[Section~13]{CST4}.

\begin{proposition}\label{prop:chipq}
Let $(X,\ov{p})$ be a  perverse stratified pseudomanifold 
and $\varepsilon\colon (\gC_{*}^{\ov{t}}(X;R),\gd)\to (R,0)$  an augmentation.
Then there is a cochain map,
\begin{equation}\label{equa:lechi}
\chi_{\ov{p}}\colon \tN^{\ast}_{\ov{p}}(X;R)\to \gC^{\ast}_{D\ov{p}}(X;R),
\end{equation}
defined by 
$\chi_{\ov{p}}(\omega)=\varepsilon \star \omega$.
We denote by
$\chi_{\ov{p},c}\colon \tN^{\ast}_{\ov{p},c}(X;R)\to \gC^{\ast}_{D\ov{p},c}(X;R)$
the restriction of $\chi_{\ov{p}}$ to the cochains with compact supports.
\end{proposition}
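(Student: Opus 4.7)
The plan is to interpret the augmentation $\varepsilon\colon \gC_{*}^{\ov{t}}(X;R)\to R$ as a $0$-cocycle of $\gC^{*}_{\ov{t}}(X;R)$ and then apply the star product machinery already established in Section~\ref{subsec:products}. First, I would observe that an augmentation is a degree-preserving chain map with $R$ concentrated in degree~$0$, so $\varepsilon$ is an element of $\Hom_R(\gC_0^{\ov{t}}(X;R),R)\subset \gC^0_{\ov{t}}(X;R)$. The defining property $\varepsilon\circ\gd=0$ together with the sign convention $\gd c(\xi)=-(-1)^{|c|}c(\gd\xi)$ gives $\gd\varepsilon=0$, i.e.\ $\varepsilon$ is a cocycle.

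Next I would unpack the target. Using the module structure (\ref{equa:capcohomology}) with $\ov{p}$ replaced by $\ov{t}$, $\ov{q}$ replaced by $\ov{p}$, and $j=0$, $i=|\omega|$, the star product gives
$$\varepsilon\star\omega\in \gC^{|\omega|}_{\ov{t}-\ov{p}}(X;R)=\gC^{|\omega|}_{D\ov{p}}(X;R),$$
so the formula $\chi_{\ov{p}}(\omega)=\varepsilon\star\omega$ does define a map of the stated type, degree by degree.

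For the cochain map property, I would appeal directly to the Leibniz rule (\ref{equa:stardiff}): since $|\varepsilon|=0$ and $\gd\varepsilon=0$,
$$\gd(\varepsilon\star\omega)=\gd\varepsilon\star\omega+(-1)^{0}\varepsilon\star d\omega=\varepsilon\star d\omega=\chi_{\ov{p}}(d\omega),$$
which is exactly $\chi_{\ov{p}}\circ d=\gd\circ\chi_{\ov{p}}$. For the compact support version, I would note that if $\omega\in \tN^*_{\ov{p},c}(X;R)$ has support in a compact $K\subset X$, then for any filtered simplex $\xi$ whose image avoids $K$ the cap product $\omega\frown\xi$ vanishes, hence $(\varepsilon\star\omega)(\xi)=\varepsilon(\omega\frown\xi)=0$; therefore $\varepsilon\star\omega$ has compact support as well, and $\chi_{\ov{p},c}$ is the announced restriction.

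The only real subtlety is that this verification relies on having a well-defined augmentation on the tame chain complex $\gC_{*}^{\ov{t}}(X;R)$; this is the mildest perverse condition (top perversity), so the classical augmentation sending each $0$-simplex to $1$ descends without difficulty, and no further obstacle arises.
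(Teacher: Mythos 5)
Your proof is correct and follows essentially the same path as the paper's: both reduce the cochain-map property to the Leibniz rule for the cap/star product after checking that $\omega\frown\xi$ lands in $\gC_0^{\ov{t}}(X;R)$, where $\varepsilon$ is defined. The only cosmetic difference is that you cite the already-stated identity~(\ref{equa:stardiff}) directly, viewing $\varepsilon$ as a $\gd$-closed $0$-cochain, whereas the paper re-derives that special case from~(\ref{equa:capdiff}); your brief justification that $\chi_{\ov{p}}$ preserves compact supports is a harmless addition the paper leaves implicit.
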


\begin{proof}
Let  $\omega\in \tN^k_{\ov{p}}(X;R)$
and
$\xi\in \gC_{k}^{D\ov{p}}(X;R)$.
With the notation of the statement, we  observe from (\ref{equa:caphomology}) that
$\omega\frown\xi\in \gC_{0}^{\ov{t}}(X;R)$ and 
thus $\varepsilon(\omega\frown\xi)$ is well defined,
To check the compatibility with the differentials, we apply $\varepsilon$ at the two sides of (\ref{equa:capdiff}).
First, we have $\varepsilon(\gd(\omega\frown\xi))=0$ which implies,
\begin{eqnarray*}
0&=&
\varepsilon((d\omega)\frown \xi)+(-1)^{|\omega|}\varepsilon(\omega\frown (\gd\xi))
=
\chi_{\ov{p}}(d\omega)(\xi)+(-1)^{|\omega|}\chi_{\ov{p}}(\omega)(\gd \xi)\\
&=&
\chi_{\ov{p}}(d\omega)(\xi)-
\gd\chi_{\ov{p}}(\omega)( \xi))
\end{eqnarray*}
and $\gd\chi_{\ov{p}}(\omega)=\chi_{\ov{p}}(d\omega)$.
\end{proof}

\begin{corollary}\label{cor:todoben}
Let $(X,\ov{p})$ be a  
paracompact, separable and oriented perverse stratified pseudomanifold of dimension $n$
and $\gamma_X$ a representing cycle of  the fundamental class $[X]\in H_n^{\infty,\ov{0}}(X;R)$. 
Then, the map
\begin{equation}\label{equa:pairingR2}
\crD_{\ov{p}}\colon \tN^{\star}_{\ov{p}}(X;R)\to
(D\tN^*_{D\ov{p},c}(X;R))_{n-\star} 
\end{equation}
defined by
$\crD_{\ov{p}}(\omega)(\omega',\omega'')=((\varepsilon\star\omega\star\omega')(\gamma_{X}),0)$ is a quasi-isomorphism
 if, and only if, the map $D\chi_{D\ov{p},c}$ is one also.
\end{corollary}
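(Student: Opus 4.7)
The plan is to establish the corollary in two moves: first reduce the question about $\crD_{\ov{p}}$ to a question about $\chi_{\ov{p}}$, and second use duality to compare $\chi_{\ov{p}}$ with its dual analogue $D\chi_{D\ov{p},c}$.

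First I would unfold the definitions to verify that $\crD_{\ov{p}} = \crC_{D\ov{p}} \circ \chi_{\ov{p}}$. Indeed, for $\omega \in \tN^{\star}_{\ov{p}}(X;R)$, $\chi_{\ov{p}}(\omega) = \varepsilon \star \omega \in \gC^{\star}_{D\ov{p}}(X;R)$, and then
\[
\crC_{D\ov{p}}(\varepsilon \star \omega)(\omega') = \bigl((\varepsilon \star \omega)\star \omega'\bigr)(\gamma_{X}) = (\varepsilon \star \omega \star \omega')(\gamma_{X}),
\]
by associativity of the star product, which matches the first component of $\crD_{\ov{p}}(\omega)$; the second component vanishes in both cases since the first factor lies in $R \subset QR$. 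Since $\crC_{D\ov{p}}$ is a quasi-isomorphism by \thmref{thm:duality}, $\crD_{\ov{p}}$ is a quasi-isomorphism if and only if $\chi_{\ov{p}}$ is.

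Second, I would exhibit the square
\[
\xymatrix{
\tN^{\star}_{\ov{p}}(X;R) \ar[r]^{\chi_{\ov{p}}} \ar[d]_{\crN_{\ov{p}}} & \gC^{\star}_{D\ov{p}}(X;R) \ar[d]^{\crC_{D\ov{p}}} \\
(D\gC^{*}_{\ov{p},c}(X;R))_{n-\star} \ar[r]^{D\chi_{D\ov{p},c}} & (D\tN^{*}_{D\ov{p},c}(X;R))_{n-\star}
}
\]
and show it commutes up to cochain homotopy. The top-right composite evaluated at $\omega \in \tN^{\star}_{\ov{p}}$ and tested against $\omega' \in \tN^{*}_{D\ov{p},c}$ gives $(\varepsilon \star \omega \star \omega')(\gamma_{X}) = \varepsilon\bigl((\omega \smile \omega') \frown \gamma_{X}\bigr)$, whereas the left-bottom composite gives $\varphi(\omega)(\varepsilon \star \omega') = (-1)^{|\omega||\omega'|}(\varepsilon \star \omega' \star \omega)(\gamma_{X}) = (-1)^{|\omega||\omega'|}\varepsilon\bigl((\omega' \smile \omega)\frown \gamma_{X}\bigr)$. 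These differ by the graded commutator of the blown-up cup product applied to the cycle $\gamma_{X}$; since $\smile$ is graded commutative in cohomology by \eqref{equa:cupprduitTWcohomologie}, this commutator is a coboundary, producing a cochain homotopy between the two composites. Hence the square commutes at the level of cohomology.

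Third, since $\crN_{\ov{p}}$ and $\crC_{D\ov{p}}$ are quasi-isomorphisms by \thmref{thm:duality}, the cohomology-level square has both vertical maps isomorphisms, so $\chi_{\ov{p}}$ is a quasi-isomorphism if and only if $D\chi_{D\ov{p},c}$ is. Combined with the first step, this yields the claimed equivalence.

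The main obstacle is the commutativity of the square: one has to show that the discrepancy between $\omega \smile \omega'$ and $(-1)^{|\omega||\omega'|}\omega' \smile \omega$, after capping with $\gamma_{X}$ and applying $\varepsilon$, is a boundary in the dual complex. This is handled by choosing a cochain homotopy for the graded commutativity of $\smile$ on $\tN^{*}(X;R)$ and transporting it along the cap product, using that $\gamma_{X}$ is a cycle and $\varepsilon$ is a cochain. Everything else reduces to routine bookkeeping with the definitions.
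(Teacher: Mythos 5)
Your proof is correct and lands very close to the paper's. The paper's proof is one line: it asserts the chain-level factorization $\crD_{\ov{p}} = D\chi_{D\ov{p},c}\circ\crN_{\ov{p}}$ and concludes immediately from \thmref{thm:duality}, recording $\crD_{\ov{p}}=\crC_{D\ov{p}}\circ\chi_{\ov{p}}$ as a side remark. You invert the emphasis: you take $\crD_{\ov{p}}=\crC_{D\ov{p}}\circ\chi_{\ov{p}}$ (which is a clean, strict identity by associativity of $\star$) as the primary reduction, and then compare $\chi_{\ov{p}}$ with $D\chi_{D\ov{p},c}$ via the square $(\crN_{\ov{p}},\crC_{D\ov{p}})$. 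That square is exactly the place where the graded-commutativity of $\smile$ enters: your computation correctly exhibits the discrepancy as the commutator of $\omega$ and $\omega'$ capped with $\gamma_X$. The paper's direct assertion of equality $\crD_{\ov{p}} = D\chi_{D\ov{p},c}\circ\crN_{\ov{p}}$ tacitly absorbs this sign bookkeeping; your version makes it explicit, which is arguably more careful given that the blown-up cup product is only commutative up to homotopy.

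One small imprecision: you say graded commutativity in cohomology ``produces a cochain homotopy between the two composites.'' That overclaims slightly --- from \eqref{equa:cupprduitTWcohomologie} you only know the commutator of two \emph{cocycles} is a coboundary, which gives commutativity of the square after passing to cohomology, not a cochain homotopy between the maps. But commutativity on cohomology, together with both verticals being quasi-isomorphisms by \thmref{thm:duality}, is exactly what is needed to transfer the quasi-isomorphism property between the horizontals, so your conclusion stands; you could simply delete the ``cochain homotopy'' claim and argue on induced maps in cohomology.
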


The torsion and torsion free pairings arising from $\crD_{\ov{p}}$ are studied in \secref{sec:periphmas}.

\begin{proof}
With the notation of \propref{prop:dualitycochains},
the map $\crD_{\ov{p}}$ is equal to the following composition,
$$\tN^{\star}_{\ov{p}}(X)\xrightarrow{\crN_{\ov{p}}}
(D  \gC^*_{\ov{p},c}(X))_{n-\star}\xrightarrow{D\chi_{D\ov{p},c}}
(D\tN^*_{D\ov{p},c}(X))_{n-\star}.$$
Thus the result is a consequence of   \thmref{thm:duality}.  
Let us also notice that $\crD_{\ov{p}}= \crC_{D\ov{p}} \circ \chi_{\ov{p}}$.
\end{proof}

In view of \corref{cor:todoben}, the cofibers  of $\chi_{\ov{p}}$ and $\chi_{\ov{p},c}$
in the category of cochain complexes play a fundamental role
in Poincar\'e duality. We call them
the \emph{peripheral complexes.} (A brief analysis  shows that they
correspond  to the global sections of the peripheral sheaf of \cite{GS}.)

\begin{definition}\label{def:peripheral}
Let $(X,\ov{p})$ be a perverse stratified pseudomanifold. 
The \emph{$\ov{p}$-peripheral complex} of $X$ is the mapping cone of 
$\chi_{\ov{p}}\colon \tN^*_{\ov{p}}(X;R)\to \gC^*_{D\ov{p}}(X;R)$; i.e.,
$$R^*_{\ov{p}}(X;R)=(\gC^{*}_{D\ov{p}}(X;R)\oplus \tN^{*+1}_{\ov{p}}(X;R),D),
\text{ with }
D(c,\omega)=(dc+\chi_{\ov{p}}(\omega),-d\omega).$$
We denote by  $\crR^*_{\ov{p}}(X;R)$ the homology of $R^*_{\ov{p}}(X;R)$
and call it the \emph{peripheral $\ov{p}$-intersection 
cohomology of $X$}.
Similarly, we define $R^*_{\ov{p},c}(X;R)$ and $\crR^*_{\ov{p},c}(X;R)$ from $\chi_{\ov{p},c}$.
\end{definition}

If $R$ is a field, the maps $\chi_{\ov{p}}$ and $\chi_{\ov p,c}$ are  quasi-isomorphisms,
see \cite[Theorem F]{CST4} and \cite[Proposition 2.23]{CST2}.
Therefore, the  \emph{peripheral cohomologies} $\crR^*_{\ov{p}}(X;R)$ and $\crR^*_{\ov{p},c}(X;R)$
\emph{are $R$-torsion.} 
Also, from a classical argument, as $\tN^*_{\ov{p}}(-;R)$ and $\gC^*_{\ov{p}}(-;R)$ have
  Mayer-Vietoris exact sequences, so does  the peripheral complex.

The next result concerns the  existence of a duality on the peripheral cohomology, $\crR^*_{\bullet}(-;R)$, 
we follow the same way  as in \cite[Proposition 9.3]{GS}. (Let us also notice that this technique works  
in the general framework of a triangulated category, see \cite[Theorem 1.6]{MR1763933}.)
 
 \begin{theoremb}[\cite{GS}]\label{thm:dualityperiph}
 Let $(X,\ov{p})$ be a 
 paracompact, separable and oriented perverse stratified pseudomanifold of dimension $n$
and $\gamma_X$ a representing cycle of  the fundamental class $[X]\in H_n^{\infty,\ov{0}}(X;R)$. 
 Then, there is a cochain map,
 $$\varphi_{\ov{p}}\colon R^{\star}_{\ov{p}}(X;R)\to (D R^*_{D\ov{p},c}(X;R))_{n-1-\star},$$
 inducing an isomorphism in homology. 
 \end{theoremb}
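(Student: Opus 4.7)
The strategy is to realize $\varphi_{\ov{p}}$ as a morphism of short exact sequences of complexes whose outer vertical arrows are the quasi-isomorphisms of \thmref{thm:duality}, and then to conclude by the five lemma applied to the induced long exact sequences in homology.

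By the very definition of the mapping cone, the peripheral complex $R^{*}_{\ov{p}}(X;R)$ fits in the tautological short exact sequence
\begin{equation*}
0 \to \gC^{*}_{D\ov{p}}(X;R) \xrightarrow{i} R^{*}_{\ov{p}}(X;R) \xrightarrow{\pi} \tN^{*+1}_{\ov{p}}(X;R) \to 0,
\end{equation*}
with $i(c)=(c,0)$ and $\pi(c,\omega)=-\omega$; the analogous sequence for $R^{*}_{D\ov{p},c}(X;R)$ is built from $\chi_{D\ov{p},c}$. Since $QR$ and $QR/R$ are injective $R$-modules, the Verdier dual $D$ is exact, so applying $D$ to the latter sequence and shifting the grading by $n-1-\star$, via the elementary identity $(DA^{*+1})_{k}=(DA^{*})_{k+1}$, yields
\begin{equation*}
0 \to (D\tN^{*}_{D\ov{p},c})_{n-\star} \to (DR^{*}_{D\ov{p},c})_{n-1-\star} \to (D\gC^{*}_{\ov{p},c})_{n-1-\star} \to 0.
\end{equation*}

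Now \thmref{thm:duality} directly supplies quasi-isomorphisms
$\crC_{D\ov{p}}\colon \gC^{\star}_{D\ov{p}}\to (D\tN^{*}_{D\ov{p},c})_{n-\star}$
and
$\crN_{\ov{p}}\colon \tN^{\star+1}_{\ov{p}}\to (D\gC^{*}_{\ov{p},c})_{n-1-\star}$
between the outer terms of these two sequences. My candidate for $\varphi_{\ov{p}}$ is the componentwise map $(c,\omega)\mapsto(\crC_{D\ov{p}}(c),\crN_{\ov{p}}(\omega))$, up to signs. Unfolding the definitions of $\crC$, $\crN$ and $\chi$ in terms of the star and cap products, the compatibility of $\varphi_{\ov{p}}$ with the differentials of the two mapping cones boils down to comparing
\begin{equation*}
\crC_{D\ov{p}}(\chi_{\ov{p}}(\omega))(\omega')=\varepsilon\bigl((\omega\smile\omega')\frown\gamma_{X}\bigr)\quad\text{and}\quad D\chi_{D\ov{p},c}(\crN_{\ov{p}}(\omega))(\omega')=\pm\varepsilon\bigl((\omega'\smile\omega)\frown\gamma_{X}\bigr),
\end{equation*}
which agree modulo the graded commutativity of the cup product, hence only up to an explicit chain homotopy $h$ at the cochain level.

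The main obstacle is therefore to absorb this homotopy into $\varphi_{\ov{p}}$, by adding a correction term built from $h$ to the component landing in $(D\tN^{*}_{D\ov{p},c})_{n-\star}$. Once this is done, a direct sign computation shows that $\varphi_{\ov{p}}$ commutes with the two cone differentials and with $i$, $\pi$ and their duals, so it defines an honest morphism of short exact sequences. The five lemma applied to the induced commutative ladder between the associated long exact sequences in homology, whose outer vertical arrows are isomorphisms by \thmref{thm:duality}, then gives that $\varphi_{\ov{p}}$ induces an isomorphism in homology, as required.
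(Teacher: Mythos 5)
Your proof follows the same route as the paper's: construct $\varphi_{\ov{p}}$ from the square linking $\chi_{\ov{p}}$, $\crN_{\ov{p}}$, $\crC_{D\ov{p}}$ and $D\chi_{D\ov{p},c}$, realize the two peripheral complexes as mapping cones, and deduce the quasi-isomorphism from \thmref{thm:duality} by the five lemma. The paper presents this as a commutative cube (diagram \eqref{equa:bigdiag}) while you package it as a morphism of short exact sequences of complexes, but these are the same construction. The one genuine point of divergence is the commutativity of the front square. The paper asserts that ``by construction, the front square commutes'', whereas you correctly unfold both composites and observe that strict commutativity amounts to the identity $\varepsilon\bigl((\omega\smile\omega')\frown\gamma_{X}\bigr)=(-1)^{|\omega|\,|\omega'|}\varepsilon\bigl((\omega'\smile\omega)\frown\gamma_{X}\bigr)$, which at the cochain level holds automatically only if the blown-up cup product is strictly graded commutative. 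If it is, which is what the paper's phrasing presupposes, your homotopy correction is zero and the two proofs coincide verbatim. If the cup product is commutative only up to chain homotopy, your version is the more careful one: absorbing the homotopy $h$ into the map of cones is the standard and correct fix, though you assert rather than exhibit $h$ (it is the commutativity homotopy of the cup product from \cite{CST4}). Either way the strategy and the conclusion agree with the paper's.
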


\begin{proof}
 The various arrows of the following diagram are specified below.
\begin{equation}\label{equa:bigdiag}
\xymatrix@-4ex{
&R^k_{\ov{p}}(X)\ar[ld]_-{[1]}\ar'[d][dd]^(.4){\varphi_{\ov{p}}}\\ 
\tN^k_{\ov{p}}(X)\ar[rr]^(.44){\chi_{\ov{p}}}\ar[dd]_-{\crN_{\ov{p}}}&&
\gC^k_{D\ov{p}}(X)\ar[lu]\ar[dd]^-{\crC_{D\ov{p}}}\\
&(D R^*_{D\ov{p},c}(X))_{n-k-1}\ar[ld]&\\
(D\gC^{n-k}_{\ov{p},c}(X))_{n-k}\ar[rr]_-{\chi^{\sharp}_{D\ov{p},c}}&&
(D\tN^*_{D\ov{p},c}(X))_{n-k}\ar[ul]^{-[1]}
}
\end{equation}
The map $\chi_{\ov{p}}$ is  recalled in (\ref{equa:lechi}) and 
$\chi^{\sharp}_{D\ov{p},c}=D \chi_{D\ov{p},c}$ is defined by  duality. 
The two vertical maps of the front square are defined in \propref{prop:dualitycochains}. 
By construction, the front square commutes and induces the cochain map $\varphi_{\ov{p}}$.  
From \thmref{thm:duality} and the 5-lemma, we get that
 $\varphi_{\ov{p}}$ induces an isomorphism. 
\end{proof}

\begin{corollary}\label{cor:pdp}
 Let $(X,\ov{p})$ be a  
 paracompact, separable and oriented perverse stratified pseudomanifold.
 Then, the following conditions are equivalent.
 \begin{enumerate}
 \item The stratified pseudomanifold $(X,\ov{p})$ verifies Poincar\'e duality; i.e.,
 the map $\crD_{\ov{p}}$ is a quasi-isomorphism.
  \item The map  $D\chi_{D\ov{p},c}$ is a quasi-isomorphism.
 \item The map $\chi_{\ov{p}}$ is a quasi-isomorphism
 \end{enumerate}
\end{corollary}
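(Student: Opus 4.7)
The plan is to derive the three-fold equivalence from two ingredients already established in the excerpt: the factorization of $\crD_{\ov{p}}$ given by \corref{cor:todoben}, and the self-duality of the peripheral complex provided by \thmref{thm:dualityperiph}. The strategy is to reduce each of (1), (2), (3) to an acyclicity condition on a peripheral-type complex, and then use \thmref{thm:dualityperiph} to match these acyclicity conditions.

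The equivalence $(1)\iff(2)$ is exactly the content of \corref{cor:todoben}, which exhibits $\crD_{\ov{p}}$ as the composition $D\chi_{D\ov{p},c}\circ \crN_{\ov{p}}$, with $\crN_{\ov{p}}$ a quasi-isomorphism by \thmref{thm:duality}; the two-out-of-three property then settles this implication. It thus remains to show $(2)\iff(3)$. For this, I would first reformulate (3) as the acyclicity of the peripheral complex: by \defref{def:peripheral}, $R^*_{\ov{p}}(X;R)$ is the mapping cone of $\chi_{\ov{p}}$, so its long exact sequence in cohomology gives that $\chi_{\ov{p}}$ is a quasi-isomorphism if and only if $\crR^*_{\ov{p}}(X;R)=0$. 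Symmetrically, applying the contravariant functor $D=\Hom(-,I_R^*)$, which is exact because $I_R^*$ consists of injectives, to the short exact sequence of cochain complexes
$$
0\to \gC^*_{D\ov{p},c}(X;R) \to R^*_{D\ov{p},c}(X;R) \to \tN^{*+1}_{D\ov{p},c}(X;R) \to 0
$$
realizes $DR^*_{D\ov{p},c}(X;R)$ (up to the appropriate degree shift) as the mapping cone of $D\chi_{D\ov{p},c}$. Its long exact sequence then gives that (2) is equivalent to the acyclicity of $DR^*_{D\ov{p},c}(X;R)$.

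To close the circle, I would invoke \thmref{thm:dualityperiph}, which supplies a quasi-isomorphism $\varphi_{\ov{p}}\colon R^\star_{\ov{p}}(X;R)\to (DR^*_{D\ov{p},c}(X;R))_{n-1-\star}$. Consequently $R^*_{\ov{p}}(X;R)$ and $DR^*_{D\ov{p},c}(X;R)$ are acyclic simultaneously, which yields $(2)\iff(3)$ and completes the chain of equivalences. The most delicate point is ensuring that $D$ commutes correctly with the mapping-cone construction, so that the cone of $D\chi_{D\ov{p},c}$ is genuinely identified with $DR^*_{D\ov{p},c}(X;R)$ up to the expected shift; this is a formal homological-algebra verification since $I_R^*$ is a complex of injectives. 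Notably, the argument does not require any finite generation hypothesis, and no additional geometric input on $X$ beyond what is encoded in \thmref{thm:duality} and \thmref{thm:dualityperiph}.
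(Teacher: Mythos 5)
Your argument is correct, and the $(1)\iff(2)$ step is precisely what the paper does (it cites \corref{cor:todoben}). For $(2)\iff(3)$, however, you take a different route than the paper. The paper argues directly from the commutativity of the front face of diagram (\ref{equa:bigdiag}), namely $\crC_{D\ov{p}}\circ\chi_{\ov{p}} = D\chi_{D\ov{p},c}\circ\crN_{\ov{p}}$, together with the fact that the two vertical maps $\crN_{\ov{p}}$ and $\crC_{D\ov{p}}$ are quasi-isomorphisms by \thmref{thm:duality}; the two-out-of-three property then immediately gives $(2)\iff(3)$. You instead filter the logic through the peripheral complex: you translate (3) into acyclicity of $R^*_{\ov{p}}$, translate (2) into acyclicity of $DR^*_{D\ov{p},c}$ (after identifying it with the cone of $D\chi_{D\ov{p},c}$, valid since $D$ is exact), and then invoke the self-duality quasi-isomorphism $\varphi_{\ov{p}}$ of \thmref{thm:dualityperiph} to conclude that the two peripheral complexes are acyclic simultaneously. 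This is sound, but note that \thmref{thm:dualityperiph} is itself proved from exactly the same commutative front square and \thmref{thm:duality} (via the five lemma), so your route does not rest on a genuinely independent input: it repackages the same underlying fact through the mapping-cone formalism. The paper's argument is therefore shorter and avoids the extra homological-algebra check that $D$ commutes with mapping cones, but your version makes explicit the philosophically appealing reading that ``Poincar\'e duality holds iff the peripheral complex is acyclic,'' which the paper only records as a remark after the corollary.
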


\begin{proof}
The equivalence of (1) and (2) is done in \corref{cor:todoben}
and the equivalence of (2) and (3) comes from the commutativity of the front face of 
(\ref{equa:bigdiag}) and \thmref{thm:duality}.
\end{proof}

This corollary means that $\crD_{\ov{p}}$ is a quasi-isomorphism if, and only if, 
the peripheral complex $R^k_{\ov{p}}(X;R)$ is acyclic. In \cite{GS}, Goresky and Siegel give a sufficient condition 
of acyclicity for the peripheral complex that we describe now.

First, let us observe that the two complexes, 
$ \tN^*_{\ov{p}}(X;R)$ and $\gC^*_{D\ov{p}}(X;R)$,
are connected by a cochain map,
have  Mayer-Vietoris sequences, coincide on Euclidean spaces and have the same behaviour 
for disjoint union of open subsets. 
Therefore (see \lemref{lem:supbredon}), the map $\chi_{\ov{p}}$ induces an isomorphism 
if it does on the products $\R^n\times \rc L$ where $L$ is a compact stratified pseudomanifold.
To exemplify this point, we first reduce to the particular case of a cone over a compact manifold, $X=\rc M$.
Already known computations (see \examref{exam:cone}) 
show that in this case, the difference between the two cohomology groups
is concentrated in one degree, where we have
\begin{equation}\label{equa:2cones}
\crH^{\ov{p}(\tv)+1}_{\ov{p}}(\rc M;R)= 0
\text{ and }
\gH^{\ov{p}(\tv)+1}_{D\ov{p}}(\rc M;R)=\Tors \gH_{\ov{p}(\tv)}(M;R).
\end{equation}
Thus the lack of torsion in the homology of the manifold $M$, in this critical degree, is a necessary and sufficient condition
for having an isomorphism 
between the two cohomologies $\crH^*_{\ov{p}}(\rc M;R)$ and $\gH^*_{D\ov{p}}(\rc M;R)$. 
We examine now the general case.

First, observe that 
``the'' link of a stratum is not uniquely determined but all  the links of points lying in the same stratum have 
isomorphic intersection homology groups,
see \cite[Corollary 5.3.14]{FriedmanBook}.
Thus, for sake of simplicity, we use the expression \emph{the link $L_{S}$ of a stratum $S$} if 
only the intersection homology groups of the links appear, as in the following definition.

\begin{definition}[\cite{GS}]\label{def:locallyfree}
A stratified pseudomanifold $X$ is \emph{locally $(D\ov{p},R)$-torsion free} 
if
\begin{equation}\label{equa:locallyfree}
\Tors\, \gH^{\ov{p}}_{D\ov{p}(S)}(L_{S};R)=0,
\end{equation}
for each stratum $S$ with associated link $L_{S}$. 
\end{definition}

As $\dim X=\dim L_{S}+\dim S+1$, we have $D\ov{p}(S)=\ov{t}(S)-\ov{p}(S)=\dim L_{S}-\ov{p}(S)-1$.
From Poincar\'e duality, one can deduce (see for instance \cite[Corollary 8.2.5]{FriedmanBook}) that
$X$ is locally $(\ov{p},R)$-torsion free if, and only if, it is locally $(D\ov{p},R)$-torsion free. 
We therefore use them indifferently. Let us also notice that any open subset of a
locally $(\ov{p},R)$-torsion free stratified pseudomanifold is a
locally $(\ov{p},R)$-torsion free stratified pseudomanifold.

\begin{proposition}\label{prop:chi-iso}
Let $(X,\ov p)$ be a paracompact, separable, perverse, stratified pseudomanifold. 
If  $X$ is   locally 
$(\ov{p},R)$-torsion free,
then the maps $\chi_{\ov{p}}$ and $\chi_{\ov{p},c}$ induce isomorphisms,
\begin{equation}\label{equa:gmtw}
\chi^*_{\ov{p}}\colon \crH^*_{\ov{p}}(X;R)\xrightarrow{\cong} \gH^*_{D\ov{p}}(X;R)
\text{ and }
\chi^*_{\ov{p},c}\colon \crH^*_{\ov{p},c}(X;R)\xrightarrow{\cong} \gH^*_{D\ov{p},c}(X;R).
\end{equation}
\end{proposition}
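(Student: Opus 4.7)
The strategy is to verify the hypotheses of \lemref{lem:supbredon} for the cochain maps $\chi_{\ov p}$ and $\chi_{\ov p,c}$. Conditions (ii) and (iii) are essentially formal: both $\tN^*_{\ov p}(-;R)$ and $\gC^*_{D\ov p}(-;R)$ (and their compact-support variants) carry Mayer--Vietoris sequences, and $\chi_{\ov p}$, defined via the star product with a fixed augmentation, is natural with respect to open inclusions, so it induces a morphism of Mayer--Vietoris sequences (up to sign). For disjoint unions, $\tN^*_{\ov p}$ and $\gC^*_{D\ov p}$ go to products (and their compact-support versions to direct sums), and $\chi_{\ov p}$ respects these decompositions, giving (iii).

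The heart of the argument is hypothesis (i): showing $\chi_{\ov p}$ is a quasi-isomorphism on each conical model $X = \R^a \times \rc L$ with $L$ either empty or a compact stratified pseudomanifold. I proceed by induction on $\depth X$. If $\depth X = 0$, then $L = \emptyset$ and $X = \R^a$ is a manifold; both cohomologies reduce to ordinary cohomology, and $\chi_{\ov p}$ is a quasi-isomorphism. For the inductive step, the product reduction already present in the proof of \lemref{lem:duality} brings us back to the cone $\rc L$. Since the strata of $\rc L$ away from the apex are in bijection with those of $L$ and carry the same links, local $(\ov p,R)$-torsion-freeness of $X$ implies that of $L$, so the inductive hypothesis applies to $L$.

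For the cone $\rc L$, the standard cone formulas give $\crH^k_{\ov p}(\rc L;R) \cong \crH^k_{\ov p}(L;R)$ for $k \leq \ov p(\tv)$ and zero above. Dually, from the cone formula for $\gH^{\ov p}_*(\rc L;R)$ together with the universal coefficient sequence applied to $\gC^*_{D\ov p}(L;R)$, one computes $\gH^k_{D\ov p}(\rc L;R) \cong \gH^k_{D\ov p}(L;R)$ for $k \leq \ov p(\tv)$, zero for $k \geq \ov p(\tv)+2$, and exactly $\Tors \gH^{\ov p}_{D\ov p(\tv)}(L;R)$ in the critical degree $k = \ov p(\tv)+1$, as recorded in \eqref{equa:2cones} for the manifold case. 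The locally torsion-free hypothesis kills precisely this critical term, while the inductive hypothesis identifies the two sides in the remaining degrees via $\chi_{\ov p}$, so $\chi_{\ov p}$ is a quasi-isomorphism on $\rc L$.

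The compact-support statement follows by the same inductive scheme: the compact-support cone cohomologies $\crH^*_{\ov p,c}(\rc L;R)$ and $\gH^*_{D\ov p,c}(\rc L;R)$ are computed via excision as relative cohomologies around the apex, and the same critical-degree analysis applies. The main obstacle is the cone computation itself: one must carefully track, via the universal coefficient formula applied to the linear dual $\gC^*_{D\ov p}(L;R)$, that the extra term in $\gH^*_{D\ov p}(\rc L;R)$ which has no counterpart on the blown-up side is precisely $\Tors \gH^{\ov p}_{D\ov p(\tv)}(L;R)$, which is exactly what the locally torsion-free hypothesis is designed to annihilate.
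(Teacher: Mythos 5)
The paper itself does not give an argument here: its proof is a two-line citation to \cite[Theorem F]{CST4} for $\chi^*_{\ov p}$ and to \cite[Proposition 2.23]{CST2} for $\chi^*_{\ov p,c}$. Your proposal reconstructs a self-contained proof along the lines of the Bredon-style localization lemma (\lemref{lem:supbredon}), reducing to conical charts by induction on depth and then using the cone computations \eqref{equa:conesbu}--\eqref{equa:conesgmc} to see that $\chi_{\ov p}$ and $\chi_{\ov p,c}$ only fail to be quasi-isomorphisms in the single critical degree, where the hypothesis forces the obstruction to vanish. This is exactly the strategy the cited references use, and the structure of your argument is sound: (ii) and (iii) of \lemref{lem:supbredon} are formal, $L$ inherits the locally torsion-free condition from any open subset of $X$, and the base case is classical.

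One slip worth correcting. From \eqref{equa:conesgm}, the critical term in degree $\ov p(\tv)+1$ is $\Ext(\gH^{D\ov p}_{\ov p(\tv)}(L;R),R)\cong\Tors\gH^{D\ov p}_{\ov p(\tv)}(L;R)$, and this is \emph{exactly} what the locally $(\ov p,R)$-torsion free condition (read off from \defref{def:locallyfree} with $\ov p$ and $D\ov p$ interchanged) annihilates; no Poincar\'e duality on $L$ is required. You instead identify the critical term as $\Tors\gH^{\ov p}_{D\ov p(\tv)}(L;R)$, which is the module in the definition of locally $(D\ov p,R)$-torsion free. The two conditions are equivalent, but the paper derives that equivalence from Poincar\'e duality on the link, whereas \propref{prop:chi-iso} as stated does not assume $X$ (hence $L$) oriented. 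Matching the hypothesis directly to $\Tors\gH^{D\ov p}_{\ov p(\tv)}(L;R)$ keeps the argument orientation-free, and the same module (via the universal coefficient sequence) accounts for the compact-support critical degree $\ov p(\tv)+2$ in \eqref{equa:conesgmc}.

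Finally, your reduction from $\R^a\times\rc L$ to $\rc L$ is only sketched; it is standard (both $\crH^*_{\ov p}$ and $\gH^*_{D\ov p}$ are insensitive to the $\R^a$ factor, and the compact-support versions shift by $a$), but it would be worth a line, since \lemref{lem:supbredon}(i) is phrased for the product and the cone formulas are stated for $\rc L$ alone.
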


\begin{proof}
The assertion for $\chi^*_{\ov{p}}$ is proven in  \cite[Theorem F]{CST4}
and in \cite[Proposition 2.23]{CST2} for $\chi^*_{\ov{p},c}$.
\end{proof}
By using that the dual of a quasi-isomorphism is a quasi-isomorphism and \corref{cor:pdp}, 
we deduce that
a locally $(\ov{p},R)$-torsion free stratified pseudomanifold satisfies Poincar\'e duality and
we recover \cite[Theorem 4.4]{GS}.
The reverse way is not true in general, as  \examref{exam:periphnofree} shows.

\begin{proposition}\label{prop:examdualnotfree}
There are examples of compact oriented stratified pseudomanifolds with a perversity $\ov{p}$, which are not
locally $(\ov{p},R)$-torsion free and whose $\ov{p}$-intersection homology  satisfies  Poincar\'e duality.
\end{proposition}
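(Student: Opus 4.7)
The plan is to exhibit a single explicit compact oriented stratified pseudomanifold $(X,\ov{p})$ that simultaneously violates the locally $(\ov{p},R)$-torsion free condition and satisfies Poincar\'e duality. By \corref{cor:pdp}, the second requirement is equivalent to the acyclicity of the peripheral complex $R^*_{\ov{p}}(X;R)$, so I want to engineer a global acyclicity that coexists with a local non-triviality of the peripheral cohomology at one or more singular strata. The local contribution is entirely governed by the cone computation recalled in (\ref{equa:2cones}): at a singular point with link $L$ and perversity value $\ov{p}(\tv)$, the peripheral cohomology is concentrated in one degree with value $\Tors \gH_{\ov{p}(\tv)}(L;R)$, so the local obstruction is wholly carried by that torsion summand.

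To produce the example, I would take a pseudomanifold built by gluing two cone-like pieces along a common regular collar, so that two nonzero torsion contributions from separate singular strata can cancel through the Mayer-Vietoris sequence of $R^*_{\ov{p}}(-;R)$. Concretely, one arranges a compact $X=X_{1}\cup X_{2}$ where each $X_{i}$ contains a cone neighbourhood of a singular stratum whose peripheral cohomology is a fixed torsion group $T$, and where the intersection $X_{1}\cap X_{2}$ is a regular piece that mediates between the two torsion classes, making the connecting homomorphism in Mayer-Vietoris an isomorphism on the relevant $T$-summands and forcing $\crR^{*}_{\ov{p}}(X;R)=0$.

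The main obstacle is the verification step: one must check that the connecting maps in the Mayer-Vietoris sequence of the peripheral complex really are isomorphisms on the torsion summands. Since $R^*_{\ov{p}}$ is the mapping cone of $\chi_{\ov{p}}$ and both $\tN^*_{\ov{p}}$ and $\gC^*_{D\ov{p}}$ carry Mayer-Vietoris sequences compatible with $\chi_{\ov{p}}$, one obtains a Mayer-Vietoris sequence for $R^*_{\ov{p}}$ and can reduce the computation on $X_{i}$ and on $X_{1}\cap X_{2}$ to the already known cone cases via (\ref{equa:2cones}) and \propref{prop:chi-iso} applied to the regular or locally torsion-free part. The identification of the connecting map will rely on the naturality of $\chi_{\ov{p}}$ and the explicit description of the torsion generators of the cone peripheral cohomology in terms of boundaries in the link.

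Once $R^*_{\ov{p}}(X;R)$ is shown to be acyclic, \corref{cor:pdp} yields Poincar\'e duality for $(X,\ov{p})$; by construction $X$ carries a stratum $S$ with $\Tors \gH^{\ov{p}}_{D\ov{p}(S)}(L_{S};R)\neq 0$, hence $X$ is not locally $(\ov{p},R)$-torsion free, completing the proof. The detailed verification will be carried out in \examref{exam:periphnofree}, referenced in the statement, which provides the concrete geometric model whose peripheral complex can be checked by hand.
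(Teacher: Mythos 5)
Your high-level strategy is the same as the paper's: build a compact $X$ with nonzero local torsion but globally acyclic peripheral complex, then invoke \corref{cor:pdp}. However, the specific geometric mechanism you propose cannot work. You want to glue two pieces $X_1, X_2$ each carrying a local torsion contribution $T$, along a \emph{regular} collar, and have the Mayer--Vietoris connecting map cancel the two $T$-summands. But if $X_1\cap X_2$ lies in the regular part, then $\chi_{\ov{p}}$ is an isomorphism there and $\crR^*_{\ov{p}}(X_1\cap X_2;R)=0$. The Mayer--Vietoris sequence for the peripheral complex then collapses to short exact sequences giving $\crR^k_{\ov{p}}(X;R)\cong\crR^k_{\ov{p}}(X_1;R)\oplus\crR^k_{\ov{p}}(X_2;R)$, i.e., the local contributions \emph{add} rather than cancel. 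This is exactly the situation of \examref{exam:isolated}, where a pseudomanifold with isolated singularities has peripheral cohomology equal to the direct sum of the singularities' contributions. With a regular intersection there is no nontrivial connecting map to mediate any cancellation.

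The paper's construction in \examref{exam:periphnofree} differs precisely at this point. It forms the mapping torus $X=L\times[0,1]/\!\!\sim$, with $(x,0)\sim(f(x),1)$ for a stratified homeomorphism $f$ of $L=\Sigma(S^1\times S^1\times\R P^3)$. The two open sets $U$ and $V$ in the Mayer--Vietoris cover have intersection $U\cap V\cong L\times(]0,1[\setminus\{1/2\})$, which still contains the singularities of $L$ and thus has \emph{nontrivial} peripheral cohomology (two copies of $\crR^*_{\ov{p}}(L;R)$). The restriction map $\nu(x,y)=(x-y,\,x-f^*y)$ is then an isomorphism precisely when $f^*-\id$ is, which is arranged by a careful choice of $f$ acting like a hyperbolic matrix on the torsion classes. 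To repair your argument you would need to replace the regular collar by an intersection that still carries the singular structure of the link and a gluing map that twists the torsion generators; the mapping-torus construction is the natural way to package this.
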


We complete this section with a characterization of the property 
``$(\ov{p},R)$-torsion free'' in terms of local
acyclicity of the peripheral complex, which
is equivalent to the nullity of the associated sheaf, considered in \cite{GS}.

\begin{proposition}\label{prop:localfreeacyclic}
Let $X$ be a compact oriented stratified pseudomanifold of dimension $n$
and $\ov{p}$ a perversity. Then, the stratified pseudomanifold $X$ is locally $(\ov{p},R)$-torsion free if, and only if,
$\crR^*_{\ov{p}}(U;R)=0$ for any open subset $U\subset X$.
\end{proposition}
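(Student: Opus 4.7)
The forward direction is essentially immediate. If $X$ is locally $(\ov{p},R)$-torsion free, then so is every open subset $U$ (as noted after \defref{def:locallyfree}), and \propref{prop:chi-iso} gives that $\chi_{\ov{p}}$ is a quasi-isomorphism on $U$, so its mapping cone $R^*_{\ov{p}}(U;R)$ is acyclic and $\crR^*_{\ov{p}}(U;R)=0$.

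For the converse I localize. Fix a singular stratum $S$ and a point $x\in S$, and choose a distinguished neighborhood $U\cong \R^a\times \rc L_S$ of $x$. Since $\tN^*_{\ov{p}}$ and $\gC^*_{D\ov{p}}$ are both stratified-homotopy invariant (the $\R^a$ factor can be collapsed by a K\"unneth-type argument compatible with $\chi_{\ov{p}}$), one has $\crR^*_{\ov{p}}(U;R)\cong \crR^*_{\ov{p}}(\rc L_S;R)$, which vanishes by hypothesis. The rest of the argument extracts the torsion-freeness of the intersection homology of $L_S$ in the critical degree from this single vanishing.

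To do so, I compute $\crR^*_{\ov{p}}(\rc L_S;R)$ in high degrees via the cone formulas. The blown-up cone formula gives $\crH^k_{\ov{p}}(\rc L_S;R)=0$ for $k>\ov{p}(S)$. The non-GM intersection homology cone formula (see \remref{rem:nongm}) gives $\gH_k^{D\ov{p}}(\rc L_S;R)\cong \gH_k^{D\ov{p}}(L_S;R)$ for $k\leq \ov{p}(S)$ and $0$ for $k>\ov{p}(S)$, using the identity $\dim L_S - D\ov{p}(S) - 1 = \ov{p}(S)$. Combining this with the universal coefficient formula \eqref{equa:dualuniverselhomology} applied to $\gC^*_{D\ov{p}}(\rc L_S;R)$ yields
$$
\gH^{\ov{p}(S)+1}_{D\ov{p}}(\rc L_S;R)\cong \Ext\bigl(\gH_{\ov{p}(S)}^{D\ov{p}}(L_S;R),R\bigr)\cong \Tors\,\gH_{\ov{p}(S)}^{D\ov{p}}(L_S;R),
$$
and $\gH^k_{D\ov{p}}(\rc L_S;R)=0$ for $k\geq \ov{p}(S)+2$. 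The long exact sequence of the mapping cone $R^*_{\ov{p}}(\rc L_S;R)$ then collapses in degree $\ov{p}(S)+1$ to an isomorphism $\crR^{\ov{p}(S)+1}_{\ov{p}}(\rc L_S;R)\cong \Tors\,\gH_{\ov{p}(S)}^{D\ov{p}}(L_S;R)$, so the hypothesis forces this torsion group to vanish.

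The main obstacle is the final matching of this vanishing with the wording of \defref{def:locallyfree}, which is in terms of $\Tors\,\gH_{D\ov{p}(S)}^{\ov{p}}(L_S;R)$ rather than $\Tors\,\gH_{\ov{p}(S)}^{D\ov{p}}(L_S;R)$. This is handled by induction on $\depth X$: the link $L_S$ has strictly smaller depth, so by the inductive hypothesis it satisfies Poincar\'e duality, and the torsion pairing \eqref{equa:torsion} on $L_S$ (whose dimension is $\dim L_S$, with $\dim L_S - \ov{p}(S) - 1 = D\ov{p}(S)$) identifies the two torsion groups. The base case $\depth X = 0$ is vacuous since there are then no singular strata to check.
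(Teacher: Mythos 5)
Your argument is essentially correct but carries an unnecessary detour. For the direction ``locally $(\ov{p},R)$-torsion free implies $\crR^*_{\ov{p}}(U;R)=0$ for all open $U$,'' you note that open subsets inherit local torsion-freeness and invoke Proposition~\ref{prop:chi-iso}; this is valid, since open subsets of a compact pseudomanifold remain paracompact and separable, and it is shorter than the paper's treatment, which re-runs the Bredon induction of Lemma~\ref{lem:bredon} from scratch. For the other direction, your localization to a conical chart and computation of the peripheral cohomology in the critical degree via the cone formulas is exactly the paper's approach through Example~\ref{exam:cone}.

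The ``main obstacle'' you identify at the end, however, is a phantom arising from a misreading of Definition~\ref{def:locallyfree}. That definition is stated as \emph{locally $(D\ov{p},R)$-torsion free}, with condition $\Tors\,\gH^{\ov{p}}_{D\ov{p}(S)}(L_S;R)=0$; applying the same pattern with $\ov{p}$ in place of $D\ov{p}$, the condition for ``locally $(\ov{p},R)$-torsion free'' (which is what the proposition asserts) is $\Tors\,\gH^{D\ov{p}}_{\ov{p}(S)}(L_S;R)=0$ --- precisely the group you computed. So the conclusion is immediate and the depth induction should simply be deleted. That is fortunate, because as written the induction does not close: the torsion pairing~\eqref{equa:torsion} on $L_S$ is non-singular only if $L_S$ is itself locally torsion free, and your inductive hypothesis (vanishing of $\crR^*_{\ov{p}}$ on open subsets of $X$) does not supply the corresponding vanishing on open subsets of $L_S$. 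The equivalence of the $(\ov{p},R)$ and $(D\ov{p},R)$ torsion-freeness conditions is true in general, but the paper treats it as a cited fact (\cite[Corollary 8.2.5]{FriedmanBook}) rather than something to be re-derived here.
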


\begin{proof} 
Suppose $\crR^*_{\ov{p}}(U)=0$ for any open subset $U$ of $X$. We choose a conical chart
$U=\R^{n-k}\times \rc L$. 
From \examref{exam:cone}, we observe   that the condition $\crR^*_{\ov{p}}(U)=0$
implies
$$\crR^*(\rc L)=\Tors \gH^{D\ov{p}}_{\ov{p}(\tv)}(L)=0.
$$
Therefore, the stratified pseudomanifold $X$ is locally $(\ov{p},R)$-torsion free.

We establish now the reverse way and suppose that
the stratified pseudomanifold $X$ is locally $(\ov{p},R)$-torsion free.
We apply \lemref{lem:bredon} taking for $P(U)$ the property

\centerline{``for any open subset $V$ of $U$, we have $\crR^*_{\ov{p}}(V)=0$.''}

We proceed by induction on the depth of the stratified pseudomanifold, starting easily with the case of
a manifold with empty singular set. The induction uses two steps.

$\bullet$ First, we prove $P(U)$ for any open subset $U$ of a fixed conical chart $Y=\R^m\times \rc L$. This is obvious if $L=\emptyset$
therefore, we suppose $L\neq \emptyset$.
We consider the following basis, $\cV$, of open subsets $V$ of $U$ composed of subsets of two kinds:
\begin{itemize}
\item[--] The open subsets $V$ of $U$ that do not contain the apex of $\rc L$. 
They are stratified pseudomanifolds of depth
less than $\depth\, X$ and the induction hypothesis can be used.
\item[--]  The open subsets $V=B\times \rc_{\varepsilon} L$, where $B\subset \R^m$ is an open cube,
$\varepsilon>0$ and $\rc_{\varepsilon}L=(L\times [0,\varepsilon[)/(L\times \{0\})$. The
acyclicity of $R^*_{\ov{p}}(V)$ comes from
the local $(D\ov{p},R)$-torsion freeness of $X$, as at the beginning of this proof.
\end{itemize}
This family $\cV$ is closed for finite intersections and satisfies the hypotheses of \lemref{lem:bredon}. 
We have just proved  
condition a). Property b) is a consequence of the existence of Mayer-Vietoris sequences and c) is direct.
Thus, $P(U)$ is true.

$\bullet$ Finally, for establishing the property $P(X)$,
 we choose the open basis composed of open subsets of conical charts 
 or regular open subsets and
apply \lemref{lem:bredon}. Note that condition a) is proved in the first step. 
For b) and c), the  arguments used for a conical chart apply also.
\end{proof}

Note that \examref{exam:periphnofree} is in accordance with \propref{prop:localfreeacyclic}. 
Here, conical charts are products,
$]0,1[\times  \rc (S^1\times S^1\times \R P^3)$,
that are not locally torsion free.
%

\section{A relative complex}\label{sec:relativeDp}
\begin{quote}
In this section, we take over the relative complex introduced by Friedman and Hunsicker
\cite{MR3028755}, for locally torsion free compact PL-pseudomanifolds. 
We  extend the properties given in loc. cit.  to the case
of an acyclic peripheral complex, with coefficients in $R$.
\end{quote}

Let $(X,\ov{p})$ be a perverse space such that $\ov{p}\leq D\ov{p}$.
We consider the homotopy cofiber sequence,  
$$\tN_{\ov{p}}^*(X;R)\to \tN_{D\ov{p}}^*(X;R)\to \pN(X;R).$$
We call it the $(D\ov{p},\ov{p})$-relative complex (or relative complex if there is no ambiguity) and
denote its homology by $\pH^*(X;R)$. Similarly, we consider the homotopy cofiber sequence
$$\gC^*_{D\ov{p},c}(X;R)\to \gC^*_{\ov{p},c}(X;R)\to \pC(X;R).$$

In \cite[Lemma 3.7]{MR3028755}, G. Friedman and E. Hunsicker also introduce  relative complexes
for intersection homology with rational coefficients of compact PL-pseudomanifolds.
  Their  general purpose is the extension
of Novikov additivity and Wall non-additivity in the case $R=\Q$, for
$4n$-dimensional PL-pseudomanifolds.

\begin{proposition}\label{prop:petitpasDpp}
Let $(X,\ov{p})$ be a 
paracompact, separable and oriented perverse stratified pseudomanifold of dimension $n$
with $\ov{p}\leq D\ov{p}$. 
Then there is a quasi-isomorphism
$$\psi_{\ov{p}}\colon \pN^{\star}(X;R)\to (D\pC(X;R))_{n-\star-1}.$$
\end{proposition}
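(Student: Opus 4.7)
The plan is to extract $\psi_{\ov p}$ by passing to horizontal mapping cones in the naturality square built from \thmref{thm:duality} applied to both perversities $\ov p$ and $D\ov p$. Concretely, \thmref{thm:duality} provides quasi-isomorphisms
$$
\crN_{\ov p}\colon \tN^{\star}_{\ov p}(X;R) \to (D\gC^*_{\ov p,c}(X;R))_{n-\star}
\quad \text{and} \quad
\crN_{D\ov p}\colon \tN^{\star}_{D\ov p}(X;R) \to (D\gC^*_{D\ov p,c}(X;R))_{n-\star},
$$
both defined via the star-product with a single fixed cycle $\gamma_X$ representing $[X]$. The hypothesis $\ov p\leq D\ov p$ yields the canonical inclusion $\tN^{*}_{\ov p}\hookrightarrow \tN^{*}_{D\ov p}$ on the top, and the natural cochain map $\gC^{*}_{D\ov p,c}\to\gC^{*}_{\ov p,c}$ used to define $\pC$ on the bottom. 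Because both duality maps are specified by the same evaluation formula $\crN_{\ov{\bullet}}(\omega)(c)=(\pm)(c\star\omega)(\gamma_X)$, a short inspection of the star-product shows that they fit together into a commutative square of cochain complexes.

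Passing to horizontal mapping cones, the top cone is $\pN^{\star}(X;R)$ by definition. For the bottom I would exploit that $R$ is a principal ideal domain, so $I_R^*$ is a complex of injective $R$-modules and the functor $D=\Hom(-,I_R^*)$ is exact. Applying $D$ to the cofibre sequence
$$\gC^{*}_{D\ov p,c}(X;R)\to\gC^{*}_{\ov p,c}(X;R)\to\pC(X;R)$$
produces a short exact sequence of chain complexes; the standard translation between cone and fibre under an exact contravariant functor then identifies the chain cone of $D\gC^{*}_{\ov p,c}\to D\gC^{*}_{D\ov p,c}$ with $D\pC(X;R)$ shifted down by one in chain degree. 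Re-indexed via $\star\mapsto n-\star$, this identifies the bottom cofibre (up to quasi-isomorphism) with $(D\pC(X;R))_{n-\star-1}$, producing the desired cochain map
$$\psi_{\ov p}\colon \pN^{\star}(X;R)\to(D\pC(X;R))_{n-\star-1}.$$
A Five Lemma comparison of the long exact homology sequences of the two cofibres, whose vertical arrows are induced by the quasi-isomorphisms $\crN_{\ov p}$ and $\crN_{D\ov p}$, then forces $\psi_{\ov p}$ itself to be a quasi-isomorphism.

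The main anticipated obstacle is the verification of the commutativity of the naturality square at the cochain level. The product $c\star\omega$ lives a priori in an intersection cochain complex whose perversity depends on the perversities of both factors, and one must carefully check that moving $\omega$ from $\tN^{*}_{\ov p}$ into $\tN^{*}_{D\ov p}$ and correspondingly adjusting the intersection cochain in which $c$ lies produces compatible pairings against the fixed cycle $\gamma_X$. Once this routine bookkeeping is in hand, everything else is formal homological algebra built on top of \thmref{thm:duality} and the exactness of the Verdier-type dualising functor $D$.
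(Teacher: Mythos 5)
Your proposal is essentially identical to the paper's proof: the paper assembles the same commutative square (top horizontal the inclusion $\tN^*_{\ov p}\hookrightarrow\tN^*_{D\ov p}$, bottom horizontal $D\gC^*_{\ov p,c}\to D\gC^*_{D\ov p,c}$, verticals $\crN_{\ov p}$ and $\crN_{D\ov p}$ from \thmref{thm:duality}) into a cube, extracts $\psi_{\ov p}$ as the induced map on the mapping cones of the horizontal arrows, and concludes by the five lemma. The ``main anticipated obstacle'' you flag is indeed trivial, as you suspect: the formula $(c\star\omega)(\xi)=c(\omega\frown\xi)$ is perversity-independent, so the two routes around the square compute the same functional on $\gC^*_{D\ov p,c}$.
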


\begin{proof}
We introduce  a diagram, as in the proof of \thmref{thm:dualityperiph},
\begin{equation}\label{equa:bigdiag2}
\xymatrix@-3ex{
&\pN^k(X)
\ar[ld]_-{[1]} 
\ar'[d][dd]^(-.3){\psi_{\ov{p}}}
\\ 
\tN^k_{\ov{p}}(X)\ar[rr]
\ar[dd]_-{\crN_{\ov{p}}}
&&
\tN^k_{D\ov{p}}(X)\ar[lu]\ar[dd]^{\crN_{D\ov{p}}}
\\
&(D \pC(X))_{n-k-1}\ar[ld]&\\
(D\gC^{*}_{\ov{p},c}(X))_{n-k}
\ar[rr]
&&
(D\gC^*_{D\ov{p},c}(X))_{n-k}.\ar[ul]_-{-[1]}
}
\end{equation}
The two vertical maps of the front face are quasi-isomorphisms. 
They induce, the back vertical arrow,
$\psi_{\ov{p}}$, which is  also a quasi-isomorphism.
\end{proof}

By construction, we have
$\psi_{\ov{p}}(\omega)(c,c')=((dc\star \omega+(-1)^{|c|}c\star d\omega)(\gamma_X),0)$,
where $\gamma_{X}$ is a cycle representing the fundamental class.

\begin{corollary}\label{cor:petitpasDpp}
Let $(X,\ov{p})$ be a 
paracompact, separable and oriented perverse stratified pseudomanifold of dimension $n$
such that $\chi_{\ov{p}}$ and $\chi_{\ov{p},c}$ are quasi-isomorphisms.  
Denote by $\pNc^*(X;R)$ the cofiber of 
$\tN_{\ov{p},c}^*(X;R)\to \tN_{D\ov{p},c}^*(X;R)$.
Then there is a quasi-isomorphism
$$\Psi_{\ov{p}}\colon \pN^{\star}(X;R)\to (D\pNc^*(X;R))_{n-\star-1}.$$
\end{corollary}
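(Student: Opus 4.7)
The plan is to adapt the cubical-diagram argument used for Proposition \ref{prop:petitpasDpp}, but with the front vertical maps $\crN_{\ov{p}}$ and $\crN_{D\ov{p}}$ replaced by the Poincar\'e duality candidates $\crD_{\ov{p}}$ and $\crD_{D\ov{p}}$ from \eqref{equa:pairingR2}. The back face remains the cofiber sequence
\[
\tN^{*}_{\ov{p}}(X;R)\to \tN^{*}_{D\ov{p}}(X;R)\to \pN^{*}(X;R),
\]
while the front face is the Verdier dual of the compactly supported cofiber sequence
\[
\tN^{*}_{\ov{p},c}(X;R)\to \tN^{*}_{D\ov{p},c}(X;R)\to \pNc^{*}(X;R),
\]
namely $(D\tN^{*}_{D\ov{p},c})_{n-\star}\to (D\tN^{*}_{\ov{p},c})_{n-\star}\to (D\pNc^{*})_{n-\star-1}$. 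Naturality in the perversity of the cap product with a representing cycle $\gamma_{X}$ of the fundamental class makes this a map of cofiber sequences, and the induced map between the cofiber terms is the desired $\Psi_{\ov{p}}$.

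The heart of the argument is to verify that both front vertical maps are quasi-isomorphisms, from which the five lemma applied to the long exact cohomology sequences yields that $\Psi_{\ov{p}}$ is a quasi-isomorphism. For $\crD_{\ov{p}}$, Corollary \ref{cor:pdp} delivers the claim directly from the hypothesis that $\chi_{\ov{p}}$ is a quasi-isomorphism. For $\crD_{D\ov{p}}$, Corollary \ref{cor:pdp} applied to the complementary perversity $D\ov{p}$, using the involution $DD\ov{p}=\ov{p}$, reduces the claim to showing that $D\chi_{\ov{p},c}$ is a quasi-isomorphism. This in turn follows from the second hypothesis that $\chi_{\ov{p},c}$ is a quasi-isomorphism, together with the elementary fact that $D=\Hom(-,I_{R}^{*})$ preserves quasi-isomorphisms because $I_{R}^{*}$ is a bounded complex of injective $R$-modules.

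The main obstacle is not conceptual but bookkeeping. One has to check commutativity of the two side squares of the cube, which reduces to compatibility of the cap product with the inclusions $\tN^{*}_{\ov{p}}\hookrightarrow \tN^{*}_{D\ov{p}}$ and $\tN^{*}_{\ov{p},c}\hookrightarrow \tN^{*}_{D\ov{p},c}$, and to track the homological degree shift so that the bottom cofiber is indeed indexed by $n-\star-1$ rather than $n-\star$. Both ingredients are already implicit in the formalism of Sections \ref{sec:back} and \ref{sec:capdual}, so the verification should be routine modulo signs; the real content of the corollary is the reduction to $\crD_{\ov{p}}$ and $\crD_{D\ov{p}}$ in the previous paragraph.
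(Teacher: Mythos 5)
Your proof is correct and is essentially the same argument as the paper's: once one unwinds the factorization $\crD_{\ov{p}} = D\chi_{D\ov{p},c}\circ\crN_{\ov{p}}$ from the proof of \corref{cor:todoben}, your cube with $\crD_{\ov{p}}$ and $\crD_{D\ov{p}}$ on the vertical sides is precisely the stacking of the $\psi_{\ov{p}}$-cube from \propref{prop:petitpasDpp} with the cube built from $D\chi_{\ov{p},c}$ and $D\chi_{D\ov{p},c}$, and your $\Psi_{\ov{p}}$ coincides with the paper's $D\chi_{D\ov{p}/\ov{p},c}\circ\psi_{\ov{p}}$. The reduction of the two hypotheses to the two vertical quasi-isomorphisms via \corref{cor:pdp} (applied at $\ov{p}$ and at $D\ov{p}$) and the exactness of $\Hom(-,I_R^*)$, followed by the five lemma, is exactly the logical content of the paper's proof.
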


As in \propref{prop:pairingdual}, such quasi-isomorphism induces non-singular pairings for the torsion and the torsion free
parts of the
homology of $\pN^*(X;R)$. In the compact PL-case, the previous statement corresponds to the duality obtained in
\cite{MR3028755}.
In contrast with the peripheral complex of the previous section, 
the homology of this relative complex is not entirely torsion, see \examref{Dpp}.

\begin{proof}
Let us observe that the maps $\chi_{\ov{p},c}$ and $\chi_{D\ov{p},c}$ induce a map
$$\chi_{D\ov{p}/\ov{p},c} \colon \pNc^*(X;R)\to \pC(X;R).$$
As $\chi_{\ov{p},c}$ is a quasi-isomorphism, its dual $D\chi_{\ov{p},c}$ is one also. 
On the other hand, with \corref{cor:pdp}, as $\chi_{\ov{p}}$ is a quasi-isomorphism, 
then $D\chi_{D\ov{p},c}$  is one also. 
Therefore, with the five lemma, we deduce that $D\chi_{D\ov{p}/\ov{p},c}$
 is a quasi-isomorphism.
In conclusion, the composition 
$D\chi_{D\ov{p}/\ov{p},c} \circ \psi_{\ov{p}}$
is the quasi-isomorphism $\Psi_{\ov{p}}$.
\end{proof}

\section{Components of the peripheral complex}\label{sec:periphmas}

\begin{quote}
In this section, we study the peripheral complex in the compact case.
It is constituted of ``three components" coming from the torsion  and torsion free parts
of the two cohomologies defining it.  
They correspond to failures of the existence of
non-singular  torsion  or  torsion free Poincar\'e pairings for intersection homology and blown-up cohomology.
\end{quote}

 If $X$ is compact, the map 
 $\crD_{\ov{p}}\colon \tN^{k}_{\ov{p}}(X;R)\to
(D\tN^*_{D\ov{p}}(X;R))_{n-k} $ 
 generates two pairings,
\begin{equation}\label{equa:freepairingR}
\Phi_{\ov{p}} \colon  \Free\crH^k_{\ov{p}}(X;R)\otimes \Free \crH^{n-k}_{D\ov{p}}(X;R)\to R
\end{equation}
and
\begin{equation}\label{equa:torpairingR}
L_{\ov{p}} \colon 
\Tors \crH^k_{\ov{p}}(X;R)\otimes \Tors \crH^{n+1-k}_{D\ov{p}}(X;R)\to QR/R. 
\end{equation}
For sake of simplicity, we call $\Phi_{\ov{p}}$ the \emph{Poincar\'e torsion free pairing} 
and $L_{\ov{p}}$ the
\emph{Poincar\'e torsion pairing.}
Let us also observe that any of the isomorphisms of \propref{prop:dualityblownuphomology}  allows the replacement of 
$\crH^k_{\ov{p}}(-)$ by $\gH_{n-k}^{\ov{p}}(-)$,
giving pairings of the intersection homology itself.
If $\chi_{\ov{p}}$ is a quasi-isomorphism, these two pairings are non-singular. In this section,
we  are looking for sufficient conditions suitable for one of them to be non-singular.
 
\subsection{\tt Components of the peripheral complex}
From 
$\chi^*_{\ov{p}}\colon \crH^*_{\ov{p}}(X;R)\to \gH^*_{D\ov{p}}(X;R)$, we construct, by restriction and 
projection, a morphism of exact sequences,
\begin{equation}\label{equa:chiFT}
\xymatrix{
0\ar[r]&
\Tors \crH^*_{\ov{p}}(X;R)\ar[r]\ar[d]_{\chi_{\ov{p},T}^*}&
 \crH^*_{\ov{p}}(X;R)\ar[r]\ar[d]^{\chi^*_{\ov{p}}}&
 F\crH^*_{\ov{p}}(X;R)\ar[r]\ar[d]^{\chi^*_{\ov{p},F}}&
 0\\
 0\ar[r]&
 \Tors \gH^*_{D\ov{p}}(X;R)\ar[r]&
 \gH^*_{D\ov{p}}(X;R)\ar[r]&
 F \gH^*_{D\ov{p}}(X;R)\ar[r]&
 0.
}\end{equation}
As $\chi^*_{\ov{p}}\otimes QR$ is an isomorphism, the map $\chi^*_{\ov{p},F}$ is injective
and $\coker \chi^*_{\ov{p},F}$ is entirely torsion. 
Therefore, we can define. %
\begin{eqnarray*}
\crF^*_{\ov{p}}(X;R)
&=&
\coker \chi^*_{\ov{p},F}\\
\crT^*_{\ov{p},{\mathrm C}}(X;R)
&=&
\coker \chi^*_{\ov{p},\Tors}
\;
\text{ and }
\crT^*_{\ov{p},{\mathrm K}}(X;R)
=
 \ker \chi^*_{\ov{p},\Tors} \cong  \ker \chi^*_{\ov{p}}.
\end{eqnarray*}
As first observation, we deduce from
the Ker-Coker Lemma applied to (\ref{equa:chiFT}) the short exact sequences,
\begin{equation}\label{equa:cokerexact}
\xymatrix@1{ 
0\ar[r]&
\crT^*_{\ov{p},{\mathrm C}}(X;R)\ar[r]&
\coker \chi^*_{\ov{p}}\ar[r]&
\crF^*_{\ov{p}}(X;R) \ar[r]&
0.}
\end{equation}
By definition of the peripheral complex, we also have short exact sequences,
\begin{equation}\label{equa:componentperiph}
0\to
\coker \chi^*_{\ov{p}}\to
\crR^*_{\ov{p}}(X;R)\to
\ker \chi^{*+1}_{\ov{p}}\to
0.
\end{equation}
Observe from these two series of sequences that
$\crT^*_{\ov{p},{\mathrm C}}(X;R)$ is a submodule of $\crR^*_{\ov{p}}(X;R)$.

\begin{proposition}\label{suite}
Let $(X,\ov p)$ be a compact perverse stratified pseudomanifold. Then, there exists an exact sequence:
\begin{equation}\label{equa:cokerexact2}
0\to
\crF^*_{\ov{p}}(X;R)
\to
\crR^*_{\ov p} (X;R)/\crT^*_{\ov{p},{\mathrm C}}(X;R)
\to
\crT^{*+1}_{\ov{p},{\mathrm K}}(X;R)
\to
0.
\end{equation}
\end{proposition}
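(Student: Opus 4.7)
The plan is to obtain the exact sequence as a formal consequence of the two short exact sequences \eqref{equa:cokerexact} and \eqref{equa:componentperiph} already recorded just before the statement, essentially via the third isomorphism theorem.

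First I would observe that sequence \eqref{equa:componentperiph} identifies $\coker\chi^*_{\ov{p}}$ with a submodule of $\crR^*_{\ov{p}}(X;R)$, with quotient $\ker\chi^{*+1}_{\ov{p}} = \crT^{*+1}_{\ov{p},{\mathrm K}}(X;R)$. Composing with the inclusion $\crT^*_{\ov{p},{\mathrm C}}(X;R)\hookrightarrow \coker\chi^*_{\ov{p}}$ coming from \eqref{equa:cokerexact}, I view $\crT^*_{\ov{p},{\mathrm C}}(X;R)$ as a submodule of $\crR^*_{\ov{p}}(X;R)$, so the quotient $\crR^*_{\ov{p}}(X;R)/\crT^*_{\ov{p},{\mathrm C}}(X;R)$ makes sense.

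Next I would apply the third isomorphism theorem to the chain of submodules
$$\crT^*_{\ov{p},{\mathrm C}}(X;R)\subset \coker\chi^*_{\ov{p}}\subset \crR^*_{\ov{p}}(X;R).$$
The intermediate quotient $\coker\chi^*_{\ov{p}}/\crT^*_{\ov{p},{\mathrm C}}(X;R)$ is precisely $\crF^*_{\ov{p}}(X;R)$ by \eqref{equa:cokerexact}, while the outer quotient $\crR^*_{\ov{p}}(X;R)/\coker\chi^*_{\ov{p}}$ is $\crT^{*+1}_{\ov{p},{\mathrm K}}(X;R)$ by \eqref{equa:componentperiph}. This yields the short exact sequence
$$0\to \crF^*_{\ov{p}}(X;R)\to \crR^*_{\ov p}(X;R)/\crT^*_{\ov{p},{\mathrm C}}(X;R)\to \crT^{*+1}_{\ov{p},{\mathrm K}}(X;R)\to 0,$$
where the first map is induced by the inclusion $\coker\chi^*_{\ov{p}}\hookrightarrow \crR^*_{\ov{p}}(X;R)$ after passing to the quotient, and the second is induced by the projection in \eqref{equa:componentperiph}.

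There is no real obstacle in this argument: it is purely formal once the two sequences \eqref{equa:cokerexact} and \eqref{equa:componentperiph} and the identification $\ker\chi^*_{\ov{p}}\cong \crT^*_{\ov{p},{\mathrm K}}(X;R)$ are in place. The only point worth double-checking is that the inclusion $\crT^*_{\ov{p},{\mathrm C}}(X;R)\hookrightarrow \crR^*_{\ov{p}}(X;R)$ obtained by composition coincides with the map already noted after \eqref{equa:componentperiph}, so that the quotient in the statement is unambiguous; this is immediate from the construction of \eqref{equa:cokerexact} via the Ker--Coker lemma applied to \eqref{equa:chiFT}.
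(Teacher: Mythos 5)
Your proof is correct and amounts to essentially the same argument as the paper's: you identify the chain of submodules $\crT^*_{\ov{p},{\mathrm C}}(X;R)\subset \coker\chi^*_{\ov{p}}\subset \crR^*_{\ov{p}}(X;R)$ coming from \eqref{equa:cokerexact} and \eqref{equa:componentperiph} and invoke the third isomorphism theorem, whereas the paper packages exactly that information into a $3\times 3$ commutative diagram of short exact sequences; the two formulations are interchangeable and your version is, if anything, slightly more direct.
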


\begin{proof}
The proof follows directly from the commutative diagram of exact sequences,
 \begin{equation}\label{diag}
 \footnotesize
 \xymatrix{ &0&0&0&\\
0 \ar[r] 
&
\crF^*_{\ov{p}}(X;R)\ar[r] \ar[u]
&
 \crR^*_{\ov{p}}(X;R)/ \crT^*_{\ov{p},{\mathrm C}}(X;R)\ar[r] \ar[u]
&
\crT^{*+1}_{\ov{p},{\mathrm K}}(X;R) \ar[r]\ar[u]&0
\\
0 \ar[r] 
&
\coker \chi_{\ov p}\ar[u]\ar[r] 
&
\crR^*_{\ov{p}}(X;R)\ar[u] \ar[r] 
&
\crT^{*+1}_{\ov{p},{\mathrm K}}(X;R)\ar[r]\ar@{=}[u]
&0
\\
0 \ar[r] 
&
\crT^*_{\ov{p},{\mathrm C}}(X;R)\ar@{=}[r] \ar[u]
&
\crT^*_{\ov{p},{\mathrm C}}(X;R)\ar[r] \ar[u]
&0\ar[r]\ar[u]&0
\\
 & 0 \ar[u]& 0\ar[u] &0\ar[u] & }
\end{equation}
where the first column is (\ref{equa:cokerexact})
and the middle row (\ref{equa:componentperiph}).
\end{proof}

We continue by establishing the existence of a non-singular pairing between the two components coming from the restriction of $\chi^*_{\ov{p}}$ to the torsion submodules,
$\crT^*_{\ov{p},{\mathrm C}}(X;R)$
and
$\crT^*_{\ov{p},{\mathrm K}}(X;R)$.

\begin{proposition}\label{prop:torsioncompdual}
Let $(X,\ov p)$ be an oriented  compact perverse stratified pseudomanifold. Then, there is a non-singular pairing,
$$\crK_{\ov p} \colon	
\crT^k_{\ov{p},{\mathrm K}}(X;R) \otimes \crT^{n+1-k}_{D\ov{p},{\mathrm C}}(X;R)\to QR/R.$$
\end{proposition}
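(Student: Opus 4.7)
The plan is to realise $\crK_{\ov p}$ as the descent, from a non-singular Poincar\'e torsion pairing furnished by \thmref{thm:duality}, to an appropriate subquotient. Concretely, applying \propref{prop:pairingdual} to the quasi-isomorphism $\crN_{\ov p}\colon \tN^{\star}_{\ov p}(X;R)\to (D\gC^*_{\ov p}(X;R))_{n-\star}$ yields a non-singular torsion pairing
\[
\langle\cdot,\cdot\rangle_{\crN}\colon \Tors\crH^k_{\ov p}(X;R)\otimes \Tors\gH^{n+1-k}_{\ov p}(X;R)\to QR/R,
\]
and, in parallel, applying it to $\crC_{D\ov p}$ produces a second non-singular torsion pairing
\[
\langle\cdot,\cdot\rangle_{\crC}\colon \Tors\gH^k_{D\ov p}(X;R)\otimes \Tors\crH^{n+1-k}_{D\ov p}(X;R)\to QR/R.
\]
The bridge between them is the double factorisation $\crD_{\ov p}=\crC_{D\ov p}\circ \chi_{\ov p}=D\chi_{D\ov p}\circ \crN_{\ov p}$ extracted from the commutative front face of diagram~(\ref{equa:bigdiag}); at the level of pairings this produces the compatibility
\[
L_{\ov p}(x,y)=\langle x,\chi^{n+1-k}_{D\ov p}(y)\rangle_{\crN}=\langle \chi^k_{\ov p}(x),y\rangle_{\crC}
\]
for $x\in\Tors\crH^k_{\ov p}$ and $y\in\Tors\crH^{n+1-k}_{D\ov p}$, with $L_{\ov p}$ the Poincar\'e torsion pairing of~(\ref{equa:torpairingR}).

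Next, I would define $\crK_{\ov p}$ by restricting $\langle\cdot,\cdot\rangle_{\crN}$ to the submodule $\crT^k_{\ov p,{\mathrm K}}=\ker \chi^k_{\ov p}\subseteq \Tors\crH^k_{\ov p}$ on the first slot and then verifying that the restriction annihilates $\im \chi^{n+1-k}_{D\ov p,\Tors}\subseteq \Tors\gH^{n+1-k}_{\ov p}$ on the second; since by definition $\crT^{n+1-k}_{D\ov p,{\mathrm C}}=\Tors\gH^{n+1-k}_{\ov p}/\im \chi^{n+1-k}_{D\ov p,\Tors}$, this gives a well-defined pairing into $QR/R$. The vanishing is immediate from the compatibility above: for $x\in\ker\chi^k_{\ov p}$ and $y'\in\Tors\crH^{n+1-k}_{D\ov p}$ one has $\langle x,\chi^{n+1-k}_{D\ov p}(y')\rangle_{\crN}=\langle \chi^k_{\ov p}(x),y'\rangle_{\crC}=0$.

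The non-singularity of $\crK_{\ov p}$ then rests on the reverse inclusion: any $\eta\in \Tors\gH^{n+1-k}_{\ov p}$ orthogonal to $\crT^k_{\ov p,{\mathrm K}}$ with respect to $\langle\cdot,\cdot\rangle_{\crN}$ must already lie in $\im \chi^{n+1-k}_{D\ov p,\Tors}$. I would argue this via adjunctions: the non-singularity of $\langle\cdot,\cdot\rangle_{\crN}$ identifies $\eta$ with a homomorphism $\Tors\crH^k_{\ov p}\to QR/R$; this homomorphism vanishes on $\ker \chi^k_{\ov p}$ and therefore factors through $\im \chi^k_{\ov p,\Tors}\subseteq \Tors\gH^k_{D\ov p}$; since $QR/R$ is divisible, hence $R$-injective, the factored map extends to a homomorphism defined on all of $\Tors\gH^k_{D\ov p}$; the non-singularity of $\langle\cdot,\cdot\rangle_{\crC}$ realises this extension as $\langle\cdot,y'\rangle_{\crC}$ for some $y'\in \Tors\crH^{n+1-k}_{D\ov p}$; finally the two compatibility identities force $\eta=\chi^{n+1-k}_{D\ov p}(y')$. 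This extension step, in which the injectivity of $QR/R$ combines with the duality swap $\chi_{\ov p}\leftrightarrow D\chi_{D\ov p}$ encoded by diagram~(\ref{equa:bigdiag}), is the main technical obstacle; once it is settled, the non-singularity of $\crK_{\ov p}$ follows from the standard sub-and-quotient principle for torsion pairings valued in an injective cogenerator.
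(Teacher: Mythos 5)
Your proposal is correct and uses essentially the same ingredients as the paper: the two non-singular torsion pairings obtained by applying \propref{prop:pairingdual} to $\crN_{\ov p}$ and $\crC_{D\ov p}$, the compatibility identity coming from the commutative front face of diagram~(\ref{equa:bigdiag}), and the injectivity of $QR/R$. The paper packages these into a single four-row commutative diagram whose columns are the kernel--cokernel exact sequence of $\chi^*_{\ov p,T}$ and the $\Hom(-,QR/R)$-dual of the kernel--cokernel exact sequence of $\chi^*_{D\ov p,T}$, with the two torsion-pairing isomorphisms in the middle forcing the outer rows to be isomorphisms; your argument is the same diagram chase unpacked as explicit pairing manipulations.
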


\begin{proof}
Consider the following commutative diagram, whose columns are exact sequences
$$\footnotesize
\xymatrix{
0\ar[d]&&
0\ar[d]\\
\crT^k_{\ov{p},{\mathrm K}}(X;R) 
\ar[rr]\ar[d]&&
\Hom( \crT^{n+1-k}_{D\ov{p},{\mathrm C}}(X;R),QR/R)
\ar[d]\\
\Tors \crH^k_{\ov{p}}(X;R)
\ar[rr]^-{\gD'_{\ov{p},\Tors}}_-{\cong}\ar[d]_{\chi^*_{\ov{p},T}}&&
\Hom(\Tors \gH^{n+1-k}_{\ov{p}}(X;R),QR/R)
\ar[d]^{(\chi^*_{D\ov{p},T})^{\sharp}}\\
\Tors \gH^k_{D\ov{p}}(X;R)
\ar[rr]^-{\gD''_{\ov{p},\Tors}}_-{\cong}\ar[d]&&
\Hom(\Tors \crH^{n+1-k}_{D\ov{p}}(X;R),QR/R)
\ar[d]\\
  \crT^{k}_{\ov{p},{\mathrm C}}(X;R)
 \ar[d]\ar[rr]&&
\Hom(\crT^{n+1-k}_{D\ov{p},{\mathrm K}}(X;R),QR/R)
\ar[d]\\
0&&0.
}$$
 Above, the maps $\gD'_{\ov{p},\Tors}$ and $\gD''_{\ov{p},\Tors}$ are the isomorphisms of the 
 torsion pairing associated via  \propref{prop:pairingdual} to the dualities
$\crN_{\ov{p}}$ and $\crC_{D\ov{p}}$  of \thmref{thm:duality}. 
The left-hand column is exact by construction and the right-hand one also, since $QR/R$ is injective. 
As $\gD'_{\ov{p},\Tors}$
and $\gD''_{\ov{p},\Tors}$ are isomorphisms, the result follows.
\end{proof}

\subsection{\tt Poincar\'e torsion and torsion free pairings}

As  observed before,   if the peripheral intersection cohomology vanishes, we
have  two non-singular pairings 
(\ref{equa:freepairingR}) and (\ref{equa:torpairingR}).
We study now the existence of one of these two dualities, independently of the other one.
\propref{prop:pairingdual}, \corref{cor:todoben} and \corref{cor:pdp}
give directly the following observations.

\begin{proposition}\label{prop:freetorsionpairinghomology}
Let $X$ be a compact oriented stratified pseudomanifold of dimension $n$ and $\ov{p}$ be a perversity.
\begin{enumerate}[1)]
\item The non-degenerate torsion free pairing
 \begin{equation} \nonumber
\Phi_{\ov{p}}\colon
F\crH_k^{\ov{p}}(X;R)\otimes F\crH_{n-k}^{D\ov{p}}(X;R)\to R
\end{equation}
is non-singular if, and only if, $\crF^*_{\ov{p}}(X;R) =\Coker \chi^*_{\ov{p},F}=0$. 
\item The torsion pairing
 \begin{equation} \nonumber
L_{\ov{p}}\colon
\Tors \crH_k^{\ov{p}}(X;R)\otimes \Tors \crH_{n+1-k}^{D\ov{p}}(X;R)\to QR/R
\end{equation}
can be degenerate and is non-singular if, and only if, 
$\crT^*_{\ov{p},{\mathrm C}}(X;R)
= \crT^*_{\ov{p},{\mathrm K}}(X;R)=0$,
which is also equivalent to 
$\crR^*_{\ov{p}}=\crF^*_{\ov{p}}(X;R)=\coker \chi^*_{\ov{p},F}$.
\end{enumerate}
\end{proposition}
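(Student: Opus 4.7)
The plan is to reduce the proposition to \propref{prop:pairingdual} applied at two different levels. By \corref{cor:todoben}, the duality map factors as $\crD_{\ov{p}} = \crC_{D\ov{p}} \circ \chi_{\ov{p}}$, and \thmref{thm:duality} asserts that the second factor $\crC_{D\ov{p}}$ is already a quasi-isomorphism. Applying \propref{prop:pairingdual} to $\crC_{D\ov{p}}\colon \gC^*_{D\ov{p}}(X;R)\to (D\tN^*_{D\ov{p}}(X;R))_{n-*}$ therefore yields non-singular pairings whose adjoints are isomorphisms
\begin{align*}
\gD'_{D\ov{p}} &\colon \Free \gH^k_{D\ov{p}}(X;R) \xrightarrow{\cong} \Hom(\Free \crH^{n-k}_{D\ov{p}}(X;R), R), \\
\gD''_{D\ov{p}} &\colon \Tors \gH^k_{D\ov{p}}(X;R) \xrightarrow{\cong} \Hom(\Tors \crH^{n-k+1}_{D\ov{p}}(X;R), QR/R).
\end{align*}
Applying the same proposition to $\crD_{\ov{p}}$ produces the two pairings $\Phi_{\ov{p}}$ and $L_{\ov{p}}$; by chasing the explicit formulas $\cP_F([b])([a])=\cP_0(b)(a)$ and $\cP_T([b])=\rho(\cP_0(b')/\ell)+\cP_1(b)$ through the composition (using that $\chi_{\ov{p}}$ is a cochain map, so that $db'=\ell b$ implies $d\chi_{\ov{p}}(b')=\ell\,\chi_{\ov{p}}(b)$, i.e.\ $\chi_{\ov{p}}(b')$ is a torsion witness for $\chi_{\ov{p}}(b)$), I identify the adjoint of $\Phi_{\ov{p}}$ with $\gD'_{D\ov{p}}\circ\chi^*_{\ov{p},F}$ and the adjoint of $L_{\ov{p}}$ with $\gD''_{D\ov{p}}\circ\chi^*_{\ov{p},T}$.

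Part (1) then follows immediately: since $\gD'_{D\ov{p}}$ is an isomorphism and $\chi^*_{\ov{p},F}$ is always injective (the observation right after (\ref{equa:chiFT})), the pairing $\Phi_{\ov{p}}$ is automatically non-degenerate, and it is non-singular if and only if $\chi^*_{\ov{p},F}$ is also surjective, i.e.\ $\crF^*_{\ov{p}}(X;R)=\coker\chi^*_{\ov{p},F}=0$.

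For part (2), $L_{\ov{p}}$ is non-singular if and only if $\chi^*_{\ov{p},T}$ is an isomorphism in the relevant degrees, i.e.\ both $\crT^*_{\ov{p},{\mathrm K}}(X;R)=\ker\chi^*_{\ov{p}}$ and $\crT^*_{\ov{p},{\mathrm C}}(X;R)=\coker\chi^*_{\ov{p},T}$ vanish. In contrast with part (1), nothing forces $\chi^*_{\ov{p},T}$ to be injective, so $L_{\ov{p}}$ can indeed be degenerate. The further equivalence with $\crR^*_{\ov{p}}(X;R)=\crF^*_{\ov{p}}(X;R)$ is then read off diagram (\ref{diag}): the vanishing of $\crT^*_{\ov{p},{\mathrm K}}$ collapses (\ref{equa:componentperiph}) to $\crR^*_{\ov{p}}\cong\coker\chi^*_{\ov{p}}$, and the vanishing of $\crT^*_{\ov{p},{\mathrm C}}$ collapses (\ref{equa:cokerexact}) to $\coker\chi^*_{\ov{p}}\cong\crF^*_{\ov{p}}$; conversely, $\crT^*_{\ov{p},{\mathrm C}}$ injects into $\crR^*_{\ov{p}}$ and $\crT^{*+1}_{\ov{p},{\mathrm K}}$ is a quotient of $\crR^*_{\ov{p}}/\crT^*_{\ov{p},{\mathrm C}}$ with kernel $\crF^*_{\ov{p}}$, so the identification $\crR^*_{\ov{p}}=\crF^*_{\ov{p}}$ forces both torsion components to vanish.

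The only delicate point I expect is the cochain-level identification of the adjoints of $\Phi_{\ov{p}}$ and $L_{\ov{p}}$ with $\gD'_{D\ov{p}}\circ\chi^*_{\ov{p},F}$ and $\gD''_{D\ov{p}}\circ\chi^*_{\ov{p},T}$; once this is in hand, the rest is a direct reading of the exact sequences (\ref{equa:chiFT}), (\ref{equa:cokerexact}) and (\ref{equa:componentperiph}), combined with the non-singularity of the two pairings coming from $\crC_{D\ov{p}}$.
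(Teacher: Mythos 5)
Your proposal is correct and follows exactly the route the paper intends: the paper's ``proof'' is the single sentence ``\propref{prop:pairingdual}, \corref{cor:todoben} and \corref{cor:pdp} give directly the following observations,'' and you have simply unpacked it, using the factorization $\crD_{\ov{p}}=\crC_{D\ov{p}}\circ\chi_{\ov{p}}$ and the quasi-isomorphism property of $\crC_{D\ov{p}}$ from \thmref{thm:duality} to express the adjoints of $\Phi_{\ov{p}}$ and $L_{\ov{p}}$ as $\gD'_{D\ov{p}}\circ\chi^*_{\ov{p},F}$ and $\gD''_{D\ov{p}}\circ\chi^*_{\ov{p},T}$, and then reading off the conclusion from the exact sequences of \secref{sec:periphmas}. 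The cochain-level identification of $\cP_T$ under composition that you flag as the delicate point does hold, by precisely the computation you indicate ($\chi_{\ov{p}}(b')$ is a valid torsion witness for $\chi_{\ov{p}}(b)$), so there is no gap.
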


\begin{remark} 
In the previous statement, the different possibilities can occur.
\begin{itemize}
\item In \examref{exam:susprp3cie}, the torsion free pairing is non-singular and the torsion pairing is degenerate.
\item In \examref{exam:thoms2}, the torsion free pairing is singular and the torsion pairing is non-singular.
\item In \examref{exam:pastrivial}, the torsion free pairing is singular and the torsion pairing is degenerate.
\end{itemize}
\end{remark}

 \subsection{\tt Poincar\'e duality for intersection homology}
Let $X$ be a compact oriented PL-pseudomanifold of dimension $n$ and $\ov{p}$ be a GM-perversity.
As recalled in the introduction, Goresky and MacPherson (\cite{GM1}) proved that the intersection pairing
defined on the $\ov{p}$-intersection homology,
$$
\pitchfork \colon H^{\ov p}_* (X;\Q) \otimes H^{D\ov p}_{n-*} (X;\Q) \to \Q,
$$
is non-singular. This duality has been extended over $\Z$  in (\ref{equa:free}) and (\ref{equa:torsion}) 
by Goresky and Siegel
(\cite{GS})  in the case locally $\ov{p}$-torsion free.

The existence (\cite{CST2}) of the isomorphism
$-\frown [X]\colon \crH^k_{\ov{p}}(X;R)\xrightarrow{\cong} H_{n-k}^{\ov{p}}(X;R)$
allows the definition of an ``intersection product'' defined on the intersection homology of a topological stratified 
pseudomanifold
from the commutativity of the following diagram.
$$\xymatrix{
\crH^k_{\ov{p}}(X;R)\otimes \crH^{\ell}_{\ov{q}}(X;R)
\ar[r]^-{-\cup-}\ar[d]^{\cong}\ar[d]_{-\frown [X]\otimes -\frown [X]}&
\crH^{k+\ell}_{\ov{p}+\ov{q}}(X;R)
\ar[d]^{-\frown [X]}\ar[d]_{\cong}
\\
H_{n-k}^{\ov{p}}(X;R)\otimes H_{n-\ell}^{\ov{q}}(X;R)
\ar[r]^-{-\pitchfork-}&
H_{n-k-\ell}^{\ov{p}+\ov{q}}(X;R).
}$$
With this structure, the map $\crD_{\ov{p}}$ of (\ref{equa:pairingR2}) and \propref{prop:pairingdual} give two pairings:
\begin{equation}\label{equa:dualgDF}
 \crG_{\ov{p},\Free}\colon \Free H_k^{\ov{p}}(X;R)\otimes \Free H_{n-k}^{D\ov{p}}(X;R)\to R
 \end{equation}
 and
 \begin{equation}\label{equa:dualgDT}
 \crG_{\ov{p},T}\colon \Tors H_k^{\ov{p}}(X;R)\otimes \Tors H_{n-k-1}^{D\ov{p}}(X;R)\to QR/R,
 \end{equation}
which are non-singular if, and only if, the peripheral complex is acyclic. The previous results on components
of the peripheral cohomology can also be translated here through the duality map $-\frown [X]$. In particular,
the torsion pairing may be non-singular even if the stratified pseudomanifold $X$ is not $(D\ov p,R)$-locally torsion free (see \examref{exam:periphnofree}) or even if  the peripheral term $\crR_{\ov p}^*(X;R)$
  is not trivial (see \examref{exam:thoms2}).


\section{Examples}\label{sec:examples}

\begin{quote}
This section contains references and details on the examples  appearing in the text.
The most significant example  is \examref{exam:periphnofree} which presents a compact stratified pseudomanifold
which is not locally $\ov{p}$-torsion free but whose 
intersection homology satisfies Poincar\'e duality.
\end{quote}

In the case of isolated singularities
on an $n$-dimensional stratified pseudomanifold, $n\geq 2$,  a GM-perversity $\ov{p}$
is defined by the natural number $\ov{p}(n)=k$; we  denote  it by $\ov{k}$.

\begin{example}[\emph{Cone on a pseudomanifold}]\label{exam:cone}
Let $L$ be an $(n-1)$-dimensional compact stratified  pseudomanifold.
Recall the computations \cite[Theorem E]{CST4} and \cite[Proposition 7.1.5]{FriedmanBook},
\begin{equation}\label{equa:conesbu}
\crH^j_{\ov{p}}(\rc L;R)=\left\{\begin{array}{ccl}
\crH^j_{\ov{p}}(L;R)&\text{if}&j\leq \ov{p}(\tv),\\
0&\text{if}&j> \ov{p}(\tv),
\end{array}\right.
\end{equation}
\begin{equation}\label{equa:conesgm}
\gH^j_{D\ov{p}}(\rc L;R)=\left\{\begin{array}{ccl}
\gH^j_{D\ov{p}}(L;R)&\text{if}&j\leq \ov{p}(\tv),\\[.1cm]
\Ext(\gH_{j-1}^{D\ov{p}}(L;R),R)&\text{if}&j=\ov{p}(\tv)+1,\\[.1cm]
0&\text{if}&j> \ov{p}(\tv)+1.
\end{array}\right.
\end{equation}
Moreover, we have also (see \cite[Proposition 2.18]{CST2}),
\begin{equation}\label{equa:conesbuc}
\crH^j_{\ov{p},c}(\rc L;R)=\left\{\begin{array}{ccl}
\crH^{j-1}_{\ov{p}}(L;R)&\text{if}&j\geq \ov{p}(\tv)+2,\\
0&\text{if}&j< \ov{p}(\tv)+2.
\end{array}\right.
\end{equation}
From \eqref{equa:conejoin}, 
\eqref{equa:conesgm} and
the exact sequence associated to the pair 
$(\rc L, \rc L \menos \{\tv\})$, we deduce,
\begin{equation}\label{equa:conesgmc}
\gH^j_{D\ov{p},c}(\rc L;R)=\left\{\begin{array}{ccl}
\gH^{j-1}_{D\ov{p}}(L;R)&\text{if}&j\geq \ov{p}(\tv)+3,\\[.1cm]
\Free \gH^{j-1}_{D\ov{p}}(L;R)&\text{if}&j=\ov{p}(\tv)+2,\\[.1cm]
0&\text{if}&j< \ov{p}(\tv)+2.
\end{array}\right.
\end{equation}
From \eqref{equa:conesbuc} and \eqref{equa:conesgmc}, we get,  
\begin{equation}\label{cono}
\crR^j_{\ov{p}}(\rc L;R)=
\left\{\begin{array}{ccl}
\crR^j_{\ov{p}}(L;R)&\text{if}&j\leq \ov{p}(\tv)-1,\\[.1cm]
\coker \left\{\chi_{\ov p} \colon \crH^j_{\ov{p}}(L;R) \to \gH^j_{D\ov{p}}(L;R)\right\}
&\text{if}&j = \ov{p}(\tv),\\[.1cm]
\Tors \gH^{j}_{D\ov{p}}(L;R) &\text{if}&j=\ov{p}(\tv)+1,\\[.1cm]
0&\text{if}&j\geq  \ov{p}(\tv)+2.
\end{array}\right.
\end{equation}

In the particular case of an oriented,  compact \emph{manifold} $M$, we get the peripheral complexes,
\begin{itemize}
\item $\crR_{\ov{p}}^*(\rc M;R)=\crR_{\ov{p}}^{\ov{p}(\tv)+1}(\rc M;R)
=\Ext(H_{\ov{p}(\tv)}(M;R),R)
= \Tors H^{\ov{p}(\tv)+1}(M;R)$,
\item $\crR_{\ov{p},c}^*(\rc M;R)=\crR_{\ov{p},c}^{\ov{p}(\tv)+1}(\rc M;R)
=\Ext(H_{\ov{p}(\tv)}(M;R),R)
= \Tors H^{\ov{p}(\tv)+1}(M;R)$.
\end{itemize}
Let $M$ be $(n-1)$-dimensional.
Observe that $(\ov{p}(\tv)+1)+(D\ov{p}(\tv)+1)=n$.
Thus, the non-singular pairing of the torsion part of the peripheral cohomology (see \thmref{thm:dualityperiph})
corresponds to the classical Poincar\'e duality of the manifold $M$,
$$
\Tors H^{\ov{p}(\tv)+1}(M;R)\otimes \Tors H^{D\ov{p}(\tv)+1}(M;R)\to QR/R.
$$
Moreover, the condition ``locally $(\ov{p},R)$-torsion free'' of \defref{def:locallyfree} for $\rc M$ is exactly 
what we need for having an acyclic peripheral complex and thus a non-singular pairing in blown-up intersection cohomology, since
$$\Tors \gH_{D\ov{p}(\tv)}^{\ov{p}}(M;R)=\Tors H_{n-2-\ov{p}(\tv)}(M;R)
\cong \Tors H_{\ov{p}(\tv)}(M;R) =
\Tors H^{\ov{p}(\tv)+1}(M;R)
=\crR_{\ov{p}}^*(\rc M;R).
$$
\end{example}

\begin{example}[\emph{Isolated singularities}]\label{exam:isolated}
Let $X$ be a stratified pseudomanifold of dimension $n$ with isolated singularities $\Sigma$. 
Let $\R^m\times \rc L^a$ be a conical chart for any singularity $a\in\Sigma$
and set $U=\cup_{a\in \Sigma}\,\R^m\times \rc L^a$. 
As there is no singularity on $V=X\backslash \Sigma$, 
the peripheral and compact peripheral cohomologies of $V$ and $U\cap V$ are reduced to 0. From the
Mayer-Vietoris sequences, we get
$$\crR^*_{\ov{p}}(X;R)=\crR_{\ov{p}}^{\ov{p}(a)+1}(X;R)=
\oplus_{a\in\Sigma} \crR^{\ov{p}(a)+1}_{\ov{p}}( \rc L^a;R)=
\oplus_{a\in\Sigma}\Tors H^{\ov{p}(a)+1}(L^a;R)
$$
\text{ and }
$$\crR^*_{\ov{p},c}(X;R)=
\crR_{\ov{p},c}^{\ov{p}(a)+1}(X;R)=
\oplus_{a\in\Sigma} \crR^{\ov{p}(a)+1}_{\ov{p},c}( \rc L^a;R)=
\oplus_{a\in\Sigma}\Tors H^{\ov{p}(a)+1}(L^a;R).
$$
\end{example}

\begin{example}[\emph{Non-singular torsion free pairing with degenerate torsion pairing}]\label{exam:susprp3cie}
Let $M$ be an oriented compact manifold. From the previous example, we deduce:
$$\crR_{\ov{p}}^*(\Sigma M;R)=
\crR^*_{\ov{p}}(\rc M;R)\oplus \crR^*_{\ov{p}}(\rc M;R)=
\Tors H^{\ov{p}(\tv)+1}(M;R)
\oplus
\Tors H^{\ov{p}(\tv)+1}(M;R).$$
We also have
$$\crT^*_{\ov{p},C}(\Sigma M;R)=\crT_{\ov{p},C}^{\ov{p}(\tv)+1}(\Sigma M;R)=\Tors H^{\ov{p}(\tv)+1}(M;R)
$$
and
$$\crT^*_{\ov{p},K}(\Sigma M;R)=\crT_{\ov{p},K}^{\ov{p}(\tv)+2}(\Sigma M;R)=\Tors H^{\ov{p}(\tv)+1}(M;R).
$$
Thus, here,  the duality of \propref{prop:torsioncompdual} is given by the Poincar\'e duality on the manifold~$M$.
Moreover, as \emph{$\crF^*_{\ov{p}}(X;R)=0$, the torsion free pairing $\Phi_{\ov{p}}$ of (\ref{equa:freepairingR}) is non-singular for any perversity.}
Let us consider two examples where the torsion pairing $L_{\ov{p}}$ of
(\ref{equa:torpairingR}) is degenerate.
\begin{enumerate}[a)]
\item Consider $X=\Sigma \R P^3$,  $R=\Z$ and $\ov{p}=\ov{1}=D\ov{p}$  (the middle perversity), we have
$$\crR^*_{\ov{1}}(X;\Z)=
\crT^3_{\ov{1},K}(X;\Z)\oplus \crT^2_{\ov{1},C}(X;\Z)
=\Z_2\oplus \Z_2.$$
\item Consider $X=\Sigma(S^1\times S^1\times \R P^3)$,  $R=\Z$ and $\ov{p}=\ov{2}=D\ov{p}$  (the middle perversity), 
we have
$$\crR^*_{\ov{2}}(X;\Z)=
\crT^4_{\ov{2},K}(X;\Z)\oplus \crT^3_{\ov{2},C}(X;\Z)
=\Z_2^2\oplus \Z_2^2.$$
Here, in contrast with a), the torsion free pairing is non-trivial.
\end{enumerate}
\end{example}

The next example is an illustration of a peripheral cohomology which comes from the  
torsion free part of the map $\chi_{\ov{p}}$.

\begin{example}[\emph{Singular torsion free pairing with non-singular torsion pairing}]\label{exam:thoms2}
We present two examples of Thom space built from the circle space of a manifold $B$
relatively to an Euler class $e$.
\begin{enumerate}[a)]
\item We choose $B=S^2$, $R=\Z$, $\ov{p}=D\ov{p}=\ov{1}$ and $e=2w$ where $w\in H^2(S^2;\Z)$
is a generator.
This example has been described in \cite[Example 4.10]{CST2} by using the Thom isomorphism and
the Gysin sequence. We deduce from this reference that the map
$\chi_{\ov{1}}\colon \crH^2_{\ov{1}}(X;\Z)=\Z\to \gH^2_{\ov{1}}(X;\Z)=\Z$
is the multiplication by 2. 
Since both cohomologies are abelian  free groups, we have
$$\crR^*_{\ov{1}}(X;\Z)=\crR^2_{\ov{1}}(X;\Z)=\crF^2_{\ov{1}}(X;\Z)=\Z_{2}.$$
Thus, \emph{the torsion pairing $L_{\ov{1}}$ of (\ref{equa:torpairingR}) is non-singular and 
the torsion free pairing $\Phi_{\ov{1}}$ of (\ref{equa:freepairingR}) is singular.} 
\item  We choose $B=\R P^3\times \C P^2\times S^1$, $R=\Z$, $\ov{p}=D\ov{p}=\ov{4}$ and 
$e=(\alpha,3\omega,0)$, where $\alpha\in H^2(\R P^3;\Z)$ and $\omega\in H^2(\C P^2;\Z)$ 
are generators. We have,
$$\crR^*_{\ov{4}}(X;\Z)=\crR^5_{\ov{4}}(X;\Z)=\crF^5_{\ov{4}}(X;\Z)=\Z_{3}\oplus \Z_{3}.
$$
We leave the details to the reader. Here, in contrast with a), we have a non-trivial torsion pairing.
\end{enumerate}
\end{example}

For the suspension of $ \R P^3$, the peripheral cohomology comes entirely from the torsion in
cohomology; in other words, this is a case where the sequence in \cite[Remark 9.2.(3)]{GS} is exact.
This is not the case in the following example.

\begin{example}[\emph{Singular torsion free pairing with degenerate torsion pairing}]
\label{exam:pastrivial}
We consider the Thom space built from the circle bundle over $S^2\times \R P^3\times S^3$,
relatively to the Euler class $e=(3\omega,a,0)$, where
$\omega\in H^2(S^2;\Z)$ and $a\in H^2(\R P^3;\Z)$ are generators.
We choose $R=\Z$, $\ov{p}=D\ov{p}=\ov{4}$.
With the same process than in \examref{exam:thoms2} a), we prove that
$\chi^*_{\ov{4}}\colon \crH_{\ov{4}}^k(X;\Z)\cong \gH_{\ov{4}}^k(X;\Z)$, if $k\neq 5,\,6$, and that
$$\chi^*_{\ov{4}}\colon \crH_{\ov{4}}^5(X;\Z)=\Z\oplus \Z\to 
\gH_{\ov{4}}^5(X;\Z)=\Z\oplus\Z\oplus \Z_{2}$$
is defined by $\chi^*_{\ov{4}}(a,b)=(3a,3b,\ov{b})$. Finally, we have
$\chi^*_{\ov{4}}\colon \crH_{\ov{4}}^6(X;\Z)=\Z_{2}\to \gH_{\ov{4}}^6(X;\Z)=0$.
We compute $$\crR^5_{\ov{4}}(X;\Z)=\Z_{6}\oplus \Z_{6}.$$
 If we go deeper in the torsion  and the torsion free parts of $\chi_{\ov{4}}^*$, 
we get
$$\begin{array}{ccccl}
\crF^*_{\ov{4}}(X;\Z)&=&\crF^5_{\ov{4}}(X;\Z)&=&\Z_{3}\oplus \Z_{3},\\[.1cm]
\crT^*_{\ov{4},K}(X;\Z)&=&\crT^6_{\ov{4},K}(X;\Z)&=&\Z_{2},\\[.1cm]
\crT^*_{\ov{4},C}(X;\Z)&=&\crT^5_{\ov{4},C}(X;\Z)&=&\Z_{2}.
\end{array}$$
Here, \emph{the torsion free pairing $\Phi_{\ov{4}}$ is singular and the torsion pairing $L_{\ov{4}}$ is degenerate. 
Moreover, 
the exact sequence (\ref{equa:cokerexact2}) is non-trivial:}
$0\to\Z_{3}\oplus \Z_{3}\to (\Z_{6}\oplus \Z_{6})/\Z_{2}\to \Z_{2}\to 0$.
\end{example}

In the previous examples, the peripheral cohomology is non-trivial and the 
stratified pseudomanifold is not locally $\ov{p}$-torsion free.  
 The general situation can be more elaborate. 
We first study the peripheral cohomology of the suspension of a stratified homeomorphism 
of a stratified pseudomanifold, see \eqref{equa:susp}. 
Next, we give a specific example of a non-locally $\ov{p}$-torsion free 
stratified pseudomanifold with trivial peripheral cohomology.

\begin{example}{[\emph{Suspension of a stratum-preserving homeomorphism.}]} 
\label{exam:periphsuspensionmap}
Let $(L,\ov{p})$ be a stratified pseudomanifold and $f\colon L\to L$ a stratified homeomorphism, cf.
\cite[Definition 1.5]{CST4}. 
It induces homomorphisms,
$f^*\colon \crH^*_{\ov{p}}(L;R)\to \crH^*_{\ov{p}}(L;R)$
and
$f^*\colon \gH^*_{D\ov{p}}(L;R)\to \gH^*_{D\ov{p}}(L;R)$
(see \cite[Proposition 3.5]{CST4} and \cite[Proposition 3.11]{CST3})
and therefore 
$f^*\colon \crR^*_{\ov{p}}(L;R)\to \crR^*_{\ov{p}}(L;R)$.
The 
\emph{suspension of $f$} is the quotient
\begin{equation}\label{equa:susp}
X=L\times [0,1]/\sim,
\end{equation}
with $(x,0)\sim (f(x),1)$ for any $x\in L$. 
We obtain a stratified pseudomanifold relatively to the filtration
$X_{k}=L_{k}\times [0,1]/\sim$. Locally, this stratified pseudomanifold is stratified homeomorphic to
$L\times J$, where $J\subset \R$ is an interval. So, the perversity $\ov{p}$ on $L$ extends naturally to a perversity
on $X$, also denoted $\ov{p}$.
We cover $X$ with two open subsets, $\{U,V\}$, where 
$$U=(L\times ([0,1[\menos \{1/2\})/\sim 
\text{ and }
V=(L\times ]0,1[)/\sim =L\times ]0,1[.$$
We have $U\cap V=L\times (]0,1[\menos \{1/2\})$ and the restriction map in the Mayer-Vietoris sequence,
$\crR^k_{\ov{p}}(U)\oplus \crR^k_{\ov{p}}(V)\to \crR^k_{\ov{p}}(U\cap V)$,
 becomes
\begin{equation}\label{equa:periphsuspension}
\nu\colon \crR^k_{\ov{p}}(L) \oplus \crR^k_{\ov{p}}(L)\to \crR^k_{\ov{p}}(L) \oplus \crR^k_{\ov{p}}(L),
\end{equation}
with $\nu(x,y)=(x-y,x-f^*(y))$. 
The correspondences $(x,y)\mapsto x$ and $(x,y)\mapsto y-x$ giving isomorphisms, 
$\ker \nu \cong \ker (f-\id)^*$ and $\coker \nu\cong \coker (f-\id)^*$,
 the Mayer-Vietoris sequence reduces to short exact sequences
\begin{equation}\label{equa:MVsuspension}
\xymatrix@1{
0\ar[r]&
\coker(f^*-\id)^k\ar[r]&
\crR^k_{\ov{p}}(X)\ar[r]&
\ker(f^*-\id)^{k+1}\ar[r]&
0.
}
\end{equation}
\end{example} 

\begin{example}[\emph{Pseudomanifold which is not locally $\ov{p}$-torsion free and whose 
$\ov{p}$-intersection homology 
has a Poincar\'e duality}]\label{exam:periphnofree}
With the notation of \examref{exam:periphsuspensionmap}, we choose the stratified pseudomanifold
$L=\Sigma(S^1\times S^1\times \R P^3)$, $R=\Z$ and $\ov{p}=D\ov{p}=\ov{2}$. The corresponding
 peripheral cohomology can be determined from \examref{exam:susprp3cie} as
 \begin{eqnarray}\label{equa:sustorerp3}
 \crR^*_{\ov{2}}(L)=\crR^3_{\ov{2}}(L)&=&
 \Tors H^3(S^1\times S^1\times \R P^3)\oplus  \Tors H^3(S^1\times S^1\times \R P^3) \\
 &=&
 H^1(S^1\times S^1)\otimes H^2(\R P^3)\oplus H^1(S^1\times S^1)\otimes H^2(\R P^3)
 =\Z_2^2\oplus \Z_2^2. \nonumber
 \end{eqnarray}
 Let  $H^1(S^1\times S^1)=\Z[a,b]$,
 $H^2(\R P^3)=\Z_2[u]$
 be described by generators. Then, using the cross product, we have
 \begin{equation}\label{equa:sustorerp3b}
 \crR^3_{\ov{2}}(L)=\Z_2[a\times u]\oplus  \Z_2[b\times u] \oplus \Z_2[a'\times u]\oplus  \Z_2[b'\times u].
  \end{equation}
 For the stratified homeomorphism $f\colon L\to L$, we choose the suspension of the map
 $g\colon S^1\times S^1\times \R P^3\to S^1\times S^1\times \R P^3$, defined by
 $g(x,y,z)=(x+y,-x,z)$. Using the notations of (\ref{equa:sustorerp3}), the induced endomorphism 
 of the peripheral cohomology group $\crR^3_{\ov{2}}(L)$ satisfies
 \begin{eqnarray}\label{equa:labonnevaleur}
  f^*(a\times u)=(a+b)\times u,&& f^*(b\times u)=-a\times u,\\
  f^*(a'\times u)=(a'+b')\times u,&& f^*(b'\times u)=-a'\times u.\nonumber
 \end{eqnarray}
 We can now prove the two required properties on the stratified pseudomanifold $X$ obtained from the suspension of $f$.
 \begin{enumerate}[(i)]
 \item The link of singular points of $X$  is the product
 $ (S^1\times S^1\times \R P^3)$ which verifies
 $$\Tors \gH_{2}^{\ov{2}} (S^1\times S^1\times \R P^3)=\Tors H_{2} (S^1\times S^1\times \R P^3)=\Z_2\oplus \Z_2\neq 0.$$
 Thus $X$ is not $(\ov{2},\Z)$-locally torsion free.
 \item From (\ref{equa:labonnevaleur}) and (\ref{equa:MVsuspension}), we deduce
 $\ker(f^*-\id)=\coker(f^*-\id)=0$ and the triviality of the peripheral cohomology of $X$. 
 Therefore, the $\ov{p}$-intersection homology of $X$ satisfies Poincar\'e duality.
 \end{enumerate}
This example is inspired by an example of H.~King (\cite[\S 1]{MR642001}). The devotees of Riemannian foliations can
 observe a similar idea in an example of Y. Carri\`ere (\cite{MR755161}).
\end{example}

\begin{example}[\emph{Relative complex of a suspension}]\label{Dpp}

This example shows that the homology of the relative complex of \secref{sec:relativeDp} is not entirely torsion,
in contrast to the peripheral cohomology.
 Let $M=\C P^2\times S^1$ with the perversities $\ov{p} =1$, $D\ov{p}=3$.
Using the Mayer-Vietoris sequence and the classical conical calculation, one gets
  $$
  \crH^{j}_{D\ov p/ \ov p} (\Sigma M;\Z) = 
  \left\{
  \begin{array}{cc}
   \Z \oplus \Z & \hbox{if } j = 2,\,3,\\
   0& \hbox{if not.}
   \end{array}
   \right.
   $$
\end{example}

\providecommand{\bysame}{\leavevmode\hbox to3em{\hrulefill}\thinspace}
\providecommand{\MR}{\relax\ifhmode\unskip\space\fi MR }
\providecommand{\MRhref}[2]{%
  \href{http://www.ams.org/mathscinet-getitem?mr=#1}{#2}
}
\providecommand{\href}[2]{#2}

\end{document}